\setlist[enumerate]{nosep, label=(\arabic*)}
\xpatchcmd{\proof}{\itshape}{\normalfont\proofnamefont}{}{}
\newcommand{\proofnamefont}{\scshape}
\theoremstyle{plain}
\newtheorem{main}{Main Theorem}
\newtheorem{theorem}{Theorem}[section]
\newtheorem{proposition}[theorem]{Proposition}
\newtheorem{lemma}[theorem]{Lemma}
\theoremstyle{definition}
\newtheorem{definition}[theorem]{Definition}
\numberwithin{equation}{section}
\DeclareMathOperator{\inte}{int}
\newcommand{\mc}{\mathcal}
\newcommand{\eqpd}{\coloneqq}
\newcommand{\upp}[1]{\prescript{\filledsquare}{}{#1}}
\newcommand{\lop}[1]{\prescript{}{\filledsquare}{#1}}
\newcommand{\nnint}{\mathbb{N}_0}
\newcommand{\pint}{\mathbb{N}}
\newcommand{\integers}{\mathbb{Z}}
\newcommand{\Cp}{\mathcal{P}^{\circ\bullet}}
\newcommand{\Cpp}{\mathcal{P}^{\circ\bullet}_{2}}
\newcommand{\Cppnb}{\mathcal{P}^{\circ\bullet}_{2,\mathrm{nb}}}
\newcommand{\resbr}{\mathcal{B}_{\mathrm{res}}}
\newcommand{\brt}[3]{\mathrm{Br}\left({#1}\mid {#3}\mid {#2}\right)}
\newcommand{\trmw}{\mathrm{WIn}}
\newcommand{\trms}{\mathrm{SIn}}
\newcommand{\Arg}{\mathrm{Arg}}
\newcommand{\colors}{\{\circ,\bullet\}}
\colorlet{lightgray}{black!40}
\colorlet{gray}{black!60}
\colorlet{darkgray}{black!80}
\newcommand{\PartBracketBBWW}{%
  \begin{tikzpicture}[scale=0.25,baseline=0.04cm]
    \def\xdist{0.666}
    \def\ydist{1.25}
    \def\ypar{0.5}
    \node [scale=0.33, circle, draw=black, fill=black] (a1) at ({0*\xdist},{0*\ydist}) {};
    \node [scale=0.33, circle, draw=black, fill=black] (a2) at ({1*\xdist},{0*\ydist}) {};
    \node [scale=0.33, circle, draw=black, fill=white] (a3) at ({2*\xdist},{0*\ydist}) {};
    \node [scale=0.33, circle, draw=black, fill=white] (a4) at ({3*\xdist},{0*\ydist}) {};
    \node [scale=0.33, circle, draw=black, fill=black] (b1) at ({0*\xdist},{1*\ydist}) {};
    \node [scale=0.33, circle, draw=black, fill=black] (b2) at ({1*\xdist},{1*\ydist}) {};
    \node [scale=0.33, circle, draw=black, fill=white] (b3) at ({2*\xdist},{1*\ydist}) {};
    \node [scale=0.33, circle, draw=black, fill=white] (b4) at ({3*\xdist},{1*\ydist}) {};
    \draw (a1) -- ++ (0,{\ypar}) -| (a4);
    \draw (b1) -- ++ (0,{-\ypar}) -| (b4);
    \draw (a2) to (b2);
    \draw (a3) to (b3);    
  \end{tikzpicture}
}
\newcommand{\PartBracketBBwW}{%
  \begin{tikzpicture}[scale=0.25,baseline=0.04cm]
    \def\xdist{0.666}
    \def\ydist{1.25}
    \def\ypar{0.5}
    \node [scale=0.33, circle, draw=black, fill=black] (a1) at ({0*\xdist},{0*\ydist}) {};
    \node [scale=0.33, circle, draw=black, fill=black] (a2) at ({1*\xdist},{0*\ydist}) {};
    \node [scale=0.33, circle, draw=black, fill=white] (a3) at ({4.2*\xdist},{0*\ydist}) {};
    \node [scale=0.33, circle, draw=black, fill=black] (b1) at ({0*\xdist},{1*\ydist}) {};
    \node [scale=0.33, circle, draw=black, fill=black] (b2) at ({1*\xdist},{1*\ydist}) {};
    \node [scale=0.33, circle, draw=black, fill=white] (b3) at ({4.2*\xdist},{1*\ydist}) {};
    \node [scale=0.55] (l1) at ({2.5*\xdist},{1*\ydist}) {$\otimes w$ };
    \draw (a1) -- ++ (0,{\ypar}) -| (a3);
    \draw (b1) -- ++ (0,{-\ypar}) -| (b3);
    \draw (a2) to (b2);
  \end{tikzpicture}
}
\newcommand{\PartBracketBWwW}{%
  \begin{tikzpicture}[scale=0.25,baseline=0.04cm]
    \def\xdist{0.666}
    \def\ydist{1.25}
    \def\ypar{0.5}
    \node [scale=0.33, circle, draw=black, fill=black] (a1) at ({0*\xdist},{0*\ydist}) {};
    \node [scale=0.33, circle, draw=black, fill=white] (a2) at ({1*\xdist},{0*\ydist}) {};
    \node [scale=0.33, circle, draw=black, fill=white] (a3) at ({4.2*\xdist},{0*\ydist}) {};
    \node [scale=0.33, circle, draw=black, fill=black] (b1) at ({0*\xdist},{1*\ydist}) {};
    \node [scale=0.33, circle, draw=black, fill=white] (b2) at ({1*\xdist},{1*\ydist}) {};
    \node [scale=0.33, circle, draw=black, fill=white] (b3) at ({4.2*\xdist},{1*\ydist}) {};
    \node [scale=0.55] (l1) at ({2.5*\xdist},{1*\ydist}) {$\otimes w$ };
    \draw (a1) -- ++ (0,{\ypar}) -| (a3);
    \draw (b1) -- ++ (0,{-\ypar}) -| (b3);
    \draw (a2) to (b2);
  \end{tikzpicture}
}
\newcommand{\PartBracketWBwB}{%
  \begin{tikzpicture}[scale=0.25,baseline=0.04cm]
    \def\xdist{0.666}
    \def\ydist{1.25}
    \def\ypar{0.5}
    \node [scale=0.33, circle, draw=black, fill=white] (a1) at ({0*\xdist},{0*\ydist}) {};
    \node [scale=0.33, circle, draw=black, fill=black] (a2) at ({1*\xdist},{0*\ydist}) {};
    \node [scale=0.33, circle, draw=black, fill=black] (a3) at ({4.2*\xdist},{0*\ydist}) {};
    \node [scale=0.33, circle, draw=black, fill=white] (b1) at ({0*\xdist},{1*\ydist}) {};
    \node [scale=0.33, circle, draw=black, fill=black] (b2) at ({1*\xdist},{1*\ydist}) {};
    \node [scale=0.33, circle, draw=black, fill=black] (b3) at ({4.2*\xdist},{1*\ydist}) {};
    \node [scale=0.55] (l1) at ({2.5*\xdist},{1*\ydist}) {$\otimes w$ };
    \draw (a1) -- ++ (0,{\ypar}) -| (a3);
    \draw (b1) -- ++ (0,{-\ypar}) -| (b3);
    \draw (a2) to (b2);
  \end{tikzpicture}
}
\newcommand{\PartBracketWWwB}{%
  \begin{tikzpicture}[scale=0.25,baseline=0.04cm]
    \def\xdist{0.666}
    \def\ydist{1.25}
    \def\ypar{0.5}
    \node [scale=0.33, circle, draw=black, fill=white] (a1) at ({0*\xdist},{0*\ydist}) {};
    \node [scale=0.33, circle, draw=black, fill=white] (a2) at ({1*\xdist},{0*\ydist}) {};
    \node [scale=0.33, circle, draw=black, fill=black] (a3) at ({4.2*\xdist},{0*\ydist}) {};
    \node [scale=0.33, circle, draw=black, fill=white] (b1) at ({0*\xdist},{1*\ydist}) {};
    \node [scale=0.33, circle, draw=black, fill=white] (b2) at ({1*\xdist},{1*\ydist}) {};
    \node [scale=0.33, circle, draw=black, fill=black] (b3) at ({4.2*\xdist},{1*\ydist}) {};
    \node [scale=0.55] (l1) at ({2.5*\xdist},{1*\ydist}) {$\otimes w$ };
    \draw (a1) -- ++ (0,{\ypar}) -| (a3);
    \draw (b1) -- ++ (0,{-\ypar}) -| (b3);
    \draw (a2) to (b2);
  \end{tikzpicture}
}
\newcommand{\PartBracketBWBW}{%
  \begin{tikzpicture}[scale=0.25,baseline=0.04cm]
    \def\xdist{0.666}
    \def\ydist{1.25}
    \def\ypar{0.5}
    \node [scale=0.33, circle, draw=black, fill=black] (a1) at ({0*\xdist},{0*\ydist}) {};
    \node [scale=0.33, circle, draw=black, fill=white] (a2) at ({1*\xdist},{0*\ydist}) {};
    \node [scale=0.33, circle, draw=black, fill=black] (a3) at ({2*\xdist},{0*\ydist}) {};
    \node [scale=0.33, circle, draw=black, fill=white] (a4) at ({3*\xdist},{0*\ydist}) {};
    \node [scale=0.33, circle, draw=black, fill=black] (b1) at ({0*\xdist},{1*\ydist}) {};
    \node [scale=0.33, circle, draw=black, fill=white] (b2) at ({1*\xdist},{1*\ydist}) {};
    \node [scale=0.33, circle, draw=black, fill=black] (b3) at ({2*\xdist},{1*\ydist}) {};
    \node [scale=0.33, circle, draw=black, fill=white] (b4) at ({3*\xdist},{1*\ydist}) {};
    \draw (a1) -- ++ (0,{\ypar}) -| (a4);
    \draw (b1) -- ++ (0,{-\ypar}) -| (b4);
    \draw (a2) to (b2);
    \draw (a3) to (b3);    
  \end{tikzpicture}
}
\newcommand{\PartBracketWBWB}{%
  \begin{tikzpicture}[scale=0.25,baseline=0.04cm]
    \def\xdist{0.666}
    \def\ydist{1.25}
    \def\ypar{0.5}
    \node [scale=0.33, circle, draw=black, fill=white] (a1) at ({0*\xdist},{0*\ydist}) {};
    \node [scale=0.33, circle, draw=black, fill=black] (a2) at ({1*\xdist},{0*\ydist}) {};
    \node [scale=0.33, circle, draw=black, fill=white] (a3) at ({2*\xdist},{0*\ydist}) {};
    \node [scale=0.33, circle, draw=black, fill=black] (a4) at ({3*\xdist},{0*\ydist}) {};
    \node [scale=0.33, circle, draw=black, fill=white] (b1) at ({0*\xdist},{1*\ydist}) {};
    \node [scale=0.33, circle, draw=black, fill=black] (b2) at ({1*\xdist},{1*\ydist}) {};
    \node [scale=0.33, circle, draw=black, fill=white] (b3) at ({2*\xdist},{1*\ydist}) {};
    \node [scale=0.33, circle, draw=black, fill=black] (b4) at ({3*\xdist},{1*\ydist}) {};
    \draw (a1) -- ++ (0,{\ypar}) -| (a4);
    \draw (b1) -- ++ (0,{-\ypar}) -| (b4);
    \draw (a2) to (b2);
    \draw (a3) to (b3);    
  \end{tikzpicture}
}
\newcommand{\PartBracketBBvWvW}{%
  \begin{tikzpicture}[scale=0.25,baseline=0.04cm]
    \def\xdist{0.666}
    \def\ydist{1.25}
    \def\ypar{0.5}
    \node [scale=0.33, circle, draw=black, fill=black] (a1) at ({0*\xdist},{0*\ydist}) {};
    \node [scale=0.33, circle, draw=black, fill=black] (a2) at ({1*\xdist},{0*\ydist}) {};
    \node [scale=0.33, circle, draw=black, fill=white] (a3) at ({3.75*\xdist},{0*\ydist}) {};
    \node [scale=0.33, circle, draw=black, fill=white] (a4) at ({6.5*\xdist},{0*\ydist}) {};
    \node [scale=0.33, circle, draw=black, fill=black] (b1) at ({0*\xdist},{1*\ydist}) {};
    \node [scale=0.33, circle, draw=black, fill=black] (b2) at ({1*\xdist},{1*\ydist}) {};
    \node [scale=0.33, circle, draw=black, fill=white] (b3) at ({3.75*\xdist},{1*\ydist}) {};
    \node [scale=0.33, circle, draw=black, fill=white] (b4) at ({6.5*\xdist},{1*\ydist}) {};
    \node [scale=0.55] (l1) at ({2.25*\xdist},{1*\ydist}) {$\otimes v$ };
    \node [scale=0.55] (l2) at ({5*\xdist},{1*\ydist}) {$\otimes v$ };        
    \draw (a1) -- ++ (0,{\ypar}) -| (a4);
    \draw (b1) -- ++ (0,{-\ypar}) -| (b4);
    \draw (a2) to (b2);
    \draw (a3) to (b3);    
  \end{tikzpicture}
}
\newcommand{\PartBracketWWvBvB}{%
  \begin{tikzpicture}[scale=0.25,baseline=0.04cm]
    \def\xdist{0.666}
    \def\ydist{1.25}
    \def\ypar{0.5}
    \node [scale=0.33, circle, draw=black, fill=white] (a1) at ({0*\xdist},{0*\ydist}) {};
    \node [scale=0.33, circle, draw=black, fill=white] (a2) at ({1*\xdist},{0*\ydist}) {};
    \node [scale=0.33, circle, draw=black, fill=black] (a3) at ({3.75*\xdist},{0*\ydist}) {};
    \node [scale=0.33, circle, draw=black, fill=black] (a4) at ({6.5*\xdist},{0*\ydist}) {};
    \node [scale=0.33, circle, draw=black, fill=white] (b1) at ({0*\xdist},{1*\ydist}) {};
    \node [scale=0.33, circle, draw=black, fill=white] (b2) at ({1*\xdist},{1*\ydist}) {};
    \node [scale=0.33, circle, draw=black, fill=black] (b3) at ({3.75*\xdist},{1*\ydist}) {};
    \node [scale=0.33, circle, draw=black, fill=black] (b4) at ({6.5*\xdist},{1*\ydist}) {};
    \node [scale=0.55] (l1) at ({2.25*\xdist},{1*\ydist}) {$\otimes v$ };
    \node [scale=0.55] (l2) at ({5*\xdist},{1*\ydist}) {$\otimes v$ };        
    \draw (a1) -- ++ (0,{\ypar}) -| (a4);
    \draw (b1) -- ++ (0,{-\ypar}) -| (b4);
    \draw (a2) to (b2);
    \draw (a3) to (b3);    
  \end{tikzpicture}
}
\newcommand{\PartBracketBBvdWvdW}{%
  \begin{tikzpicture}[scale=0.25,baseline=0.04cm]
    \def\xdist{0.666}
    \def\ydist{1.25}
    \def\ypar{0.5}
    \node [scale=0.33, circle, draw=black, fill=black] (a1) at ({0*\xdist},{0*\ydist}) {};
    \node [scale=0.33, circle, draw=black, fill=black] (a2) at ({1*\xdist},{0*\ydist}) {};
    \node [scale=0.33, circle, draw=black, fill=white] (a3) at ({4*\xdist},{0*\ydist}) {};
    \node [scale=0.33, circle, draw=black, fill=white] (a4) at ({7*\xdist},{0*\ydist}) {};
    \node [scale=0.33, circle, draw=black, fill=black] (b1) at ({0*\xdist},{1*\ydist}) {};
    \node [scale=0.33, circle, draw=black, fill=black] (b2) at ({1*\xdist},{1*\ydist}) {};
    \node [scale=0.33, circle, draw=black, fill=white] (b3) at ({4*\xdist},{1*\ydist}) {};
    \node [scale=0.33, circle, draw=black, fill=white] (b4) at ({7*\xdist},{1*\ydist}) {};
    \node [scale=0.55] (l1) at ({2.5*\xdist},{1*\ydist}) {$\otimes v'$ };
    \node [scale=0.55] (l2) at ({5.5*\xdist},{1*\ydist}) {$\otimes v'$ };        
    \draw (a1) -- ++ (0,{\ypar}) -| (a4);
    \draw (b1) -- ++ (0,{-\ypar}) -| (b4);
    \draw (a2) to (b2);
    \draw (a3) to (b3);    
  \end{tikzpicture}
}
\newcommand{\PartBracketWWvdBvdB}{%
  \begin{tikzpicture}[scale=0.25,baseline=0.04cm]
    \def\xdist{0.666}
    \def\ydist{1.25}
    \def\ypar{0.5}
    \node [scale=0.33, circle, draw=black, fill=white] (a1) at ({0*\xdist},{0*\ydist}) {};
    \node [scale=0.33, circle, draw=black, fill=white] (a2) at ({1*\xdist},{0*\ydist}) {};
    \node [scale=0.33, circle, draw=black, fill=black] (a3) at ({4*\xdist},{0*\ydist}) {};
    \node [scale=0.33, circle, draw=black, fill=black] (a4) at ({7*\xdist},{0*\ydist}) {};
    \node [scale=0.33, circle, draw=black, fill=white] (b1) at ({0*\xdist},{1*\ydist}) {};
    \node [scale=0.33, circle, draw=black, fill=white] (b2) at ({1*\xdist},{1*\ydist}) {};
    \node [scale=0.33, circle, draw=black, fill=black] (b3) at ({4*\xdist},{1*\ydist}) {};
    \node [scale=0.33, circle, draw=black, fill=black] (b4) at ({7*\xdist},{1*\ydist}) {};
    \node [scale=0.55] (l1) at ({2.5*\xdist},{1*\ydist}) {$\otimes v'$ };
    \node [scale=0.55] (l2) at ({5.5*\xdist},{1*\ydist}) {$\otimes v'$ };        
    \draw (a1) -- ++ (0,{\ypar}) -| (a4);
    \draw (b1) -- ++ (0,{-\ypar}) -| (b4);
    \draw (a2) to (b2);
    \draw (a3) to (b3);    
  \end{tikzpicture}
}
\newcommand{\PartHalfLibWBW}{%
  \begin{tikzpicture}[scale=0.25,baseline=0.04cm]
    \def\xdist{0.75}
    \def\ydist{1.25}
    \def\ypar{0.5}
    \node [scale=0.33, circle, draw=black, fill=white] (a1) at ({0*\xdist},{0*\ydist}) {};
    \node [scale=0.33, circle, draw=black, fill=black] (a2) at ({1*\xdist},{0*\ydist}) {};
    \node [scale=0.33, circle, draw=black, fill=white] (a3) at ({2*\xdist},{0*\ydist}) {};
    \node [scale=0.33, circle, draw=black, fill=white] (b1) at ({0*\xdist},{1*\ydist}) {};
    \node [scale=0.33, circle, draw=black, fill=black] (b2) at ({1*\xdist},{1*\ydist}) {};
    \node [scale=0.33, circle, draw=black, fill=white] (b3) at ({2*\xdist},{1*\ydist}) {};
    \draw (a1) to (b3);    
    \draw (a2) to (b2);
    \draw (a3) to (b1);    
  \end{tikzpicture}
}
\newcommand{\PartHalfLibBWB}{%
  \begin{tikzpicture}[scale=0.25,baseline=0.04cm]
    \def\xdist{0.75}
    \def\ydist{1.25}
    \def\ypar{0.5}
    \node [scale=0.33, circle, draw=black, fill=black] (a1) at ({0*\xdist},{0*\ydist}) {};
    \node [scale=0.33, circle, draw=black, fill=white] (a2) at ({1*\xdist},{0*\ydist}) {};
    \node [scale=0.33, circle, draw=black, fill=black] (a3) at ({2*\xdist},{0*\ydist}) {};
    \node [scale=0.33, circle, draw=black, fill=black] (b1) at ({0*\xdist},{1*\ydist}) {};
    \node [scale=0.33, circle, draw=black, fill=white] (b2) at ({1*\xdist},{1*\ydist}) {};
    \node [scale=0.33, circle, draw=black, fill=black] (b3) at ({2*\xdist},{1*\ydist}) {};
    \draw (a1) to (b3);    
    \draw (a2) to (b2);
    \draw (a3) to (b1);    
  \end{tikzpicture}
  }
  \newcommand{\PartHalfLibWWW}{%
  \begin{tikzpicture}[scale=0.25,baseline=0.04cm]
    \def\xdist{0.75}
    \def\ydist{1.25}
    \def\ypar{0.5}
    \node [scale=0.33, circle, draw=black, fill=white] (a1) at ({0*\xdist},{0*\ydist}) {};
    \node [scale=0.33, circle, draw=black, fill=white] (a2) at ({1*\xdist},{0*\ydist}) {};
    \node [scale=0.33, circle, draw=black, fill=white] (a3) at ({2*\xdist},{0*\ydist}) {};
    \node [scale=0.33, circle, draw=black, fill=white] (b1) at ({0*\xdist},{1*\ydist}) {};
    \node [scale=0.33, circle, draw=black, fill=white] (b2) at ({1*\xdist},{1*\ydist}) {};
    \node [scale=0.33, circle, draw=black, fill=white] (b3) at ({2*\xdist},{1*\ydist}) {};
    \draw (a1) to (b3);    
    \draw (a2) to (b2);
    \draw (a3) to (b1);    
  \end{tikzpicture}
}
\newcommand{\PartCrossWW}{%
  \begin{tikzpicture}[scale=0.25,baseline=0.015cm]
    \def\xdist{1}
    \def\ydist{1}
    \node [scale=0.33, circle, draw=black, fill=white] (a1) at ({0*\xdist},{0*\ydist}) {};
    \node [scale=0.33, circle, draw=black, fill=white] (a2) at ({1*\xdist},{0*\ydist}) {};
    \node [scale=0.33, circle, draw=black, fill=white] (b1) at ({0*\xdist},{1*\ydist}) {};
    \node [scale=0.33, circle, draw=black, fill=white] (b2) at ({1*\xdist},{1*\ydist}) {};
    \draw (a1) to (b2);    
    \draw (a2) to (b1);
  \end{tikzpicture}
  }
  \newcommand{\PartIdenB}{%
  \begin{tikzpicture}[scale=0.25,baseline=0.015cm]
    \def\xdist{1}
    \def\ydist{1}
    \node [scale=0.33, circle, draw=black, fill=black] (a1) at ({0*\xdist},{0*\ydist}) {};
    \node [scale=0.33, circle, draw=black, fill=black] (b1) at ({0*\xdist},{1*\ydist}) {};
    \draw (a1) to (b1);    
  \end{tikzpicture}
  }
  \newcommand{\PartIdenW}{%
  \begin{tikzpicture}[scale=0.25,baseline=0.015cm]
    \def\xdist{1}
    \def\ydist{1}
    \node [scale=0.33, circle, draw=black, fill=white] (a1) at ({0*\xdist},{0*\ydist}) {};
    \node [scale=0.33, circle, draw=black, fill=white] (b1) at ({0*\xdist},{1*\ydist}) {};
    \draw (a1) to (b1);    
  \end{tikzpicture}
  }
\newcommand{\PartIdenLoBW}{%
  \begin{tikzpicture}[scale=0.25,baseline=-0.015cm]
    \def\xdist{1}
    \def\ydist{1}
    \def\ypar{1}
    \node [scale=0.33, circle, draw=black, fill=black] (a1) at ({0*\xdist},{0*\ydist}) {};
    \node [scale=0.33, circle, draw=black, fill=white] (a2) at ({1*\xdist},{0*\ydist}) {};
    \draw (a1) -- ++ (0,{\ypar}) -| (a2);    
  \end{tikzpicture}
  }
\newcommand{\PartIdenLoWB}{%
  \begin{tikzpicture}[scale=0.25,baseline=-0.015cm]
    \def\xdist{1}
    \def\ydist{1}
    \def\ypar{1}
    \node [scale=0.33, circle, draw=black, fill=white] (a1) at ({0*\xdist},{0*\ydist}) {};
    \node [scale=0.33, circle, draw=black, fill=black] (a2) at ({1*\xdist},{0*\ydist}) {};
    \draw (a1) -- ++ (0,{\ypar}) -| (a2);    
  \end{tikzpicture}
  }
\begin{document}
\title[Categories of Two-Colored Pair Partitions, Part~I]{Categories of Two-Colored Pair Partitions\\Part~I: Categories Indexed by Cyclic Groups} 
\author{Alexander Mang}
\author{Moritz Weber}
\address{Saarland University, Fachbereich Mathematik, 
	66041 Saarbrücken, Germany}
\email{s9almang@stud.uni-saarland.de, weber@math.uni-sb.de}
\thanks{The second author was supported by the ERC Advanced Grant NCDFP, held
by Roland Spei\-cher, by the SFB-TRR 195, and by the DFG project \emph{Quantenautomorphismen von Graphen}. This work was part of the first author's Bachelor's thesis.}

\date{\today}
\subjclass[2010]{05A18 (Primary),  20G42 (Secondary)}
\keywords{quantum groups, unitary easy quantum groups, unitary group,  half-liberation, tensor category, two-colored partitions, partitions of sets, categories of partitions}

\begin{abstract}
  We classify certain categories of partitions of finite sets subject to specific rules on the colorization of points and the sizes of blocks. More precisely, we consider pair partitions such that each block contains exactly one white and one black point when rotated to one line; however crossings are allowed. There are two families of such categories, the first of which is indexed by cyclic groups and is covered in the present article; the second family will be the content of a follow-up article. Via a Tannaka-Krein result, the categories in the two families correspond to easy quantum groups interpolating the classical unitary group and Wang's free unitary quantum group. In fact, they are all half-liberated in some sense and our results imply that there are many more half-liberation procedures than previously expected. However, we focus on a purely combinatorial approach leaving quantum group aspects aside. 
\end{abstract}
\maketitle

	\section*{Introduction}
	This article is part of the classification program begun in \cite{TaWe15a}, having its roots in \cite{BaSp09}. Our base objects are partitions of finite sets into disjoint subsets, the blocks. In addition, the points are colored either black or white. We represent partitions pictorially as diagrams using strings representing the blocks; see also \cite{Sta12} and \cite{NiSp06}.
	If a set of partitions is closed under certain natural operations like horizontal or vertical concatenation or reflection at some axes, we call it a category of partitions. Categories of partitions play a crucial role in Banica and Spei\-cher's approach (\cite{BaSp09}, \cite{We16} and \cite{We17a}) to compact quantum groups in Woronowicz's sense (\cite{Wo87}, \cite{Wo88}, \cite{Wo91} and \cite{Wo98}). Although our investigations employ purely combinatorial means, let us briefly mention how they relate to the half-liberation procedures of Banica and Speicher. 
	\par
	The half-liberated orthogonal quantum group $O_n^*$, introduced by Banica and Spei\-cher in \cite{BaSp09}, represents a midway point between the free orthogonal quantum group $O_n^+$, constructed by Wang in \cite{Wa95}, and the classical orthogonal group $O_n$ over the complex numbers. It is defined by replacing the commutation relations $ab=ba$ by the half-commutation relations $abc=cba$. On the combinatorial side these half-commutation relations are represented by the partition
        \begin{gather*}
          \begin{tikzpicture}[scale=0.666,baseline=(current  bounding  box.center)]
		\node [scale=0.4, draw=black, fill=gray] (x1-1) at (0,0) {};
		\node [scale=0.4, draw=black, fill=gray] (x2-1) at (1,0) {};
		\node [scale=0.4, draw=black, fill=gray] (x3-1) at (2,0) {};
		\node [scale=0.4, draw=black, fill=gray] (x1-2) at (0,1.5) {};
		\node [scale=0.4, draw=black, fill=gray] (x2-2) at (1,1.5) {};
		\node [scale=0.4, draw=black, fill=gray] (x3-2) at (2,1.5) {};
		\draw (x1-1) -- (x3-2);
		\draw (x2-1) -- (x2-2);
		\draw (x3-1) -- (x1-2);
		\end{tikzpicture}
              \end{gather*}
        \vspace{-0.095em}               
        on non-colored points.
        \par
        If attempting a similar procedure in the case of the free unitary quantum group $U_n^+$, also defined by Wang in \cite{Wa95}, and the classical unitary group $U_n$, it is not clear what should be the analogue of the half-commutation relations. Here, the generators are no longer self-adjoint and hence their adjoints are involved. Equivalently,  we now have to deal with partitions of two-colored points.\par
        A natural starting point would be to consider the different possibilities for coloring the points of the above partition. And, indeed, this approach yields exactly two distinct categories. Each of the partitions
        \begin{gather*}
		\begin{tikzpicture}[scale=0.666,baseline=0.416cm]
		\node [circle, scale=0.4, draw=black, fill=white] (x1-1) at (0,0) {};
		\node [circle, scale=0.4, draw=black, fill=white] (x2-1) at (1,0) {};
		\node [circle, scale=0.4, draw=black, fill=white] (x3-1) at (2,0) {};
		\node [circle, scale=0.4, draw=black, fill=white] (x1-2) at (0,1.5) {};
		\node [circle, scale=0.4, draw=black, fill=white] (x2-2) at (1,1.5) {};
		\node [circle, scale=0.4, draw=black, fill=white] (x3-2) at (2,1.5) {};
		\draw (x1-1) -- (x3-2);
		\draw (x2-1) -- (x2-2);
		\draw (x3-1) -- (x1-2);
		\end{tikzpicture}
                \quad\text{and}\quad
                		\begin{tikzpicture}[scale=0.666,baseline=0.416cm]
		\node [circle, scale=0.4, draw=black, fill=white] (x1-1) at (0,0) {};
		\node [circle, scale=0.4, draw=black, fill=black] (x2-1) at (1,0) {};
		\node [circle, scale=0.4, draw=black, fill=white] (x3-1) at (2,0) {};
		\node [circle, scale=0.4, draw=black, fill=white] (x1-2) at (0,1.5) {};
		\node [circle, scale=0.4, draw=black, fill=black] (x2-2) at (1,1.5) {};
		\node [circle, scale=0.4, draw=black, fill=white] (x3-2) at (2,1.5) {};
		\draw (x1-1) -- (x3-2);
		\draw (x2-1) -- (x2-2);
		\draw (x3-1) -- (x1-2);
              \end{tikzpicture}
            \end{gather*}
            generates one.
        These categories (or rather their associated quantum groups) appeared first in \cite[Example 4.10]{BiDu13} and \cite[Definition~5.5]{BhDADa11}. Elaborating on the initial idea, one might next study the shapes
        \begin{gather*}
	\begin{tikzpicture}[scale=0.666,baseline=0.555cm]
          \node [scale=0.4, draw=black, fill=gray] (x1-1) at (0,0) {};
		\node [scale=0.4, draw=black, fill=gray] (x2-1) at (1,0) {};
		\node [scale=0.4, draw=black, fill=gray] (x3-1) at (2,0) {};
		\node [scale=0.4, draw=black, fill=gray] (x4-1) at (3,0) {};
		\node [scale=0.4, draw=black, fill=gray] (x1-2) at (0,2) {};
		\node [scale=0.4, draw=black, fill=gray] (x2-2) at (1,2) {};
		\node [scale=0.4, draw=black, fill=gray] (x3-2) at (2,2) {};
		\node [scale=0.4, draw=black, fill=gray] (x4-2) at (3,2) {};
		\draw (x1-1) -- (x3-2);
		\draw (x2-1) -- (x4-2);
		\draw (x3-1) -- (x1-2);
		\draw (x4-1) -- (x2-2);		
              \end{tikzpicture}
              \quad\text{and}\quad
	\begin{tikzpicture}[scale=0.666,baseline=0.555cm]
                \node [scale=0.4, draw=black, fill=gray] (x1-1) at (0,0) {};
		\node [scale=0.4, draw=black, fill=gray] (x2-1) at (2,0) {};
		\node [scale=0.4, draw=black, fill=gray] (x3-1) at (3,0) {};
		\node [scale=0.4, draw=black, fill=gray] (x4-1) at (5,0) {};
		\node [scale=0.4, draw=black, fill=gray] (x1-2) at (0,2) {};
		\node [scale=0.4, draw=black, fill=gray] (x2-2) at (2,2) {};
		\node [scale=0.4, draw=black, fill=gray] (x3-2) at (3,2) {};
		\node [scale=0.4, draw=black, fill=gray] (x4-2) at (5,2) {};
		\draw (x1-1) -- (x3-2);
		\draw (x2-1) -- (x4-2);
		\draw (x3-1) -- (x1-2);
		\draw (x4-1) -- (x2-2);
                \draw[thick, dotted] (0.5,0) -- (1.5,0);
                \draw[thick, dotted] (3.5,0) -- (4.5,0);
                \draw[thick, dotted] (0.5,2) -- (1.5,2);
                \draw[thick, dotted] (3.5,2) -- (4.5,2);
              \end{tikzpicture}              
            \end{gather*}
            and apply various colorings to them. Thus, one arrives at a number of further categories, discovered by Banica and Bichon in \cite{BaBi17b}.
            See Section~\ref{section:concluding_remarks} 
            for an overview of the previous work on half-liberations of $U_n$. \par
            Here, however, we take a different approach. Even though both of the partitions
                \begin{gather*}
		\begin{tikzpicture}[scale=0.666, baseline=0.555cm]
		\node[scale=0.4, draw=black, fill=gray] (x1-1) at (0,0) {};
		\node[scale=0.4, draw=black, fill=gray] (x2-1) at (1,0) {};
		\node[scale=0.4, draw=black, fill=gray] (x3-1) at (2,0) {};
		\node[scale=0.4, draw=black, fill=gray] (x1-2) at (0,2) {};			
		\node[scale=0.4, draw=black, fill=gray] (x2-2) at (1,2) {};
		\node[scale=0.4, draw=black, fill=gray] (x3-2) at (2,2) {};
		\draw (x1-1) -- (x3-2);				
		\draw (x2-1) -- (x2-2);				
		\draw (x3-1) -- (x1-2);								
              \end{tikzpicture}
		\quad\text{and}\quad              
		\begin{tikzpicture}[scale=0.666, baseline=0.555cm]
		\node[scale=0.4, draw=black, fill=gray] (x1-1) at (0,0) {};
		\node[scale=0.4, draw=black, fill=gray] (y1-1) at (1,0) {};
		\node[scale=0.4, draw=black, fill=gray] (y2-1) at (2,0) {};
		\node[scale=0.4, draw=black, fill=gray] (x2-1) at (3,0) {};			
		\node[scale=0.4, draw=black, fill=gray] (x1-2) at (0,2) {};
		\node[scale=0.4, draw=black, fill=gray] (y1-2) at (1,2) {};
		\node[scale=0.4, draw=black, fill=gray] (y2-2) at (2,2) {};
		\node[scale=0.4, draw=black, fill=gray] (x2-2) at (3,2) {};	
		\draw (x1-1) -- ++(0,0.66)-| (x2-1);
		\draw (x1-2) -- ++(0,-0.66)-| (x2-2);
		\draw (y1-1) -- (y1-2);				
		\draw (y2-1) -- (y2-2);								
		\end{tikzpicture}
              \end{gather*}
              embody the half-commutation relations $abc=cba$ in the orthogonal case, the simple, but crucial observation which unlocks the combinatorics of half-liberation in the unitary case is choosing the latter partition, which we call a \emph{bracket}, rather than the former as our model. The power of this \enquote{bracket approach} becomes especially apparent in the follow-up article \cite{MaWe18b}.
Combining the results of both articles, we eventually determine
all possible half-liberations of $U_n$, i.e., all unitary easy quantum groups $G$ (see \cite{TaWe15a} and \cite{TaWe15b}) with $U_n\subseteq G\subseteq U_n^+$, or equivalently, all categories $\mc C$ with $\langle \PartCrossWW\rangle\supseteq \mc C\supseteq \langle \emptyset\rangle$. In the present article, we classify all categories above a certain category $\mc S_0$; the remainder will be found in \cite{MaWe18b}. Throughout, we employ combinatorial means exclusively, making no use of any quantum group notions or techniques.

	\section{Main Results}
	\label{section:main}
        We introduce and describe certain categories of partitions $(\mc S_w)_{w\in \nnint}$ which may be seen as categories indexed by the cyclic groups $(\integers/w\integers)_{w\in \nnint}$.
	\begin{main}
          \label{theorem:main_1}
                          \begin{enumerate}[label=(\alph*)]
 	      \item \label{item:main_1-1}
                For every $w\in\nnint$, a category of two-colored partitions is given by the set $\mc S_w$ containing all two-colored pair partitions with the following two properties satisfied when rotated to one line:
                \begin{enumerate}
                \item Each block contains one point each of every color.
                \item Between the two legs of any block, the difference in the numbers of black and white points is a multiple of $w$.
                \end{enumerate}

                \item \label{item:main_1-2}
                  The categories $(\mc S_w)_{w\in \nnint}$ are pairwise distinct, and, for all $w\in\mathbb{N}$ (i.e.\ $w\neq 0$), the inclusions and equalities
        \begin{align*}
          \mc S_0=\bigcap_{w'\in \pint}\mc S_{w'}\subseteq \mc S_w\subseteq \mc S_1=\langle \PartCrossWW\rangle
        \end{align*}
        are valid, and for all $w,w'\in\mathbb{N}$ holds
	\begin{align*}
		w'\integers \subseteq w\integers \implies \mathcal{S}_{w'} \subseteq \mathcal{S}_{w}.
	\end{align*}

                  \item \label{item:main_1-3}
 For all $w\in\mathbb{N}$, the category $\mathcal{S}_w$ is generated by the partition
\begin{gather*}
\begin{tikzpicture}[scale=0.666, baseline=0]
\node [scale=0.4, fill=black, draw=black, circle] (x1) at (0,0) {};
\node [scale=0.4, fill=black, draw=black, circle] (x3) at (0,3) {};
\node [scale=0.4, fill=white, draw=black, circle] (y1) at (1,0) {};		
\node [scale=0.4, fill=white, draw=black, circle] (y2) at (1,3) {};				
\node [scale=0.4, fill=white, draw=black, circle] (z1) at (4,0) {};		
\node [scale=0.4, fill=white, draw=black, circle] (z2) at (4,3) {};					
\node [scale=0.4, fill=white, draw=black, circle] (x2) at (5,0) {};
\node [scale=0.4, fill=white, draw=black, circle] (x4) at (5,3) {};				
\draw (x1) -- ++(0,1) -| (x2);
\draw (x3) -- ++(0,-1) -| (x4);		
\draw (y1) -- (y2);
\draw (z1) -- (z2);	
\node at (2.5,0) {$\ldots$};
\node at (2.5,3) {$\ldots$};
\draw [dotted] (0.75,-0.25) -- ++(0,-0.25) -- (4.25,-0.5) -- ++ (0,0.25);
\node at (2.5,-1) {$w$ times};
\end{tikzpicture},
\end{gather*}	
 while  the category $\mc S_0$ is cumulatively generated by the partitions $\PartHalfLibWBW$ and
  \begin{align*}
    \begin{tikzpicture}[scale=0.666, baseline=(current  bounding  box.center)]
\node [scale=0.4, fill=black, draw=black,circle] (x1) at (0,0) {};
\node [scale=0.4, fill=black, draw=black,circle] (x3) at (0,3) {};
\node [scale=0.4, fill=black, draw=black,circle] (y1) at (1,0) {};		
\node [scale=0.4, fill=black, draw=black,circle] (y2) at (1,3) {};				
\node [scale=0.4, fill=black, draw=black,circle] (z1) at (4,0) {};		
\node [scale=0.4, fill=black, draw=black,circle] (z2) at (4,3) {};					
\node [scale=0.4, fill=white, draw=black,circle] (w1) at (5,0) {};		
\node [scale=0.4, fill=white, draw=black,circle] (w2) at (5,3) {};				
\node [scale=0.4, fill=white, draw=black,circle] (v1) at (8,0) {};		
\node [scale=0.4, fill=white, draw=black,circle] (v2) at (8,3) {};		
\node [scale=0.4, fill=white, draw=black,circle] (x2) at (9,0) {};
\node [scale=0.4, fill=white, draw=black,circle] (x4) at (9,3) {};				
\draw (x1) -- ++(0,1) -| (x2);
\draw (x3) -- ++(0,-1) -| (x4);		
\draw (y1) -- (y2);
\draw (z1) -- (z2);	
\draw (w1) -- (w2);
\draw (v1) -- (v2);	
\node at (2.5,0) {$\ldots$};
\node at (6.5,0) {$\ldots$};
\node at (2.5,3) {$\ldots$};
\node at (6.5,3) {$\ldots$};
\draw [dotted] (0.75,-0.25) -- ++(0,-0.25) -- (4.25,-0.5) -- ++ (0,0.25);
\draw [dotted, xshift=4cm] (0.75,-0.25) -- ++(0,-0.25) -- (4.25,-0.5) -- ++ (0,0.25);
\node at (2.5,-1) {$v$ times};
\node at (6.5,-1) {$v$ times};
\end{tikzpicture}
  \end{align*}
  for all $v\in \pint$.
        		\end{enumerate}
                      \end{main}
                      We classify all categories not contained in $\mc S_0$.
        \begin{main}
          \label{theorem:main_2}
          For every category $\mc C$ of two-colored partitions with \[\langle \emptyset\rangle\subseteq \mc C\subseteq  \langle\PartCrossWW\rangle,\] either $\mc C\subsetneq \mc S_0$ holds or there exists $w\in \nnint$ such that $\mc C=\mc S_w$.
        \end{main}
        
	In Section~\ref{section:concluding_remarks}, we interpret the above results in the quantum group context, align them with the previous research about half-liberations of $U_n$ and hint at their implications, to be fully discussed in the follow-up article.
\pagebreak

\section{Reminder about Partitions and their Categories}

We refer to \cite[Section 1]{TaWe15a} for an introduction to categories of two-colored partitions. However, let us briefly recall the basics.
\vspace{-0.5em}
\subsection{Two-Colored Partitions}
\label{subsection:two-colored-partitions}

\begin{wrapfigure}[7]{r}{5cm}
  \centering
  \vspace{-0.5em}
    \begin{tikzpicture}[scale=0.666]
    \def\dx{1}
    \def\ty{4}
    \def\ox{0}
    \def\oy{0}
    \def\dy{\ty / 4}
    \def\tx{7*\dx}
    \def\od{0.5}
    \draw [dotted] ({\ox-\od},{\oy+\ty}) --  ({\ox+4*\dx+\od},{\oy+\ty});
    \draw [dotted] ({\ox-\od},{\oy}) --  ({\ox+6*\dx+\od},{\oy});
    \node [scale=0.4, circle, draw=black, fill=black] (a0) at ({\ox+0*\dx},{\oy}) {};
    \node [scale=0.4, circle, draw=black, fill=white] (a1) at ({\ox+1*\dx},{\oy}) {};
    \node [scale=0.4, circle, draw=black, fill=black] (a2) at ({\ox+2*\dx},{\oy}) {};
    \node [scale=0.4, circle, draw=black, fill=white] (a3) at ({\ox+3*\dx},{\oy}) {};
    \node [scale=0.4, circle, draw=black, fill=black] (a4) at ({\ox+4*\dx},{\oy}) {};
    \node [scale=0.4, circle, draw=black, fill=black] (a5) at ({\ox+5*\dx},{\oy}) {};
    \node [scale=0.4, circle, draw=black, fill=white] (a6) at ({\ox+6*\dx},{\oy}) {};
    \node [scale=0.4, circle, draw=black, fill=black] (b0) at ({\ox+0*\dx},{\oy+\ty}) {};
    \node [scale=0.4, circle, draw=black, fill=white] (b1) at ({\ox+1*\dx},{\oy+\ty}) {};
    \node [scale=0.4, circle, draw=black, fill=black] (b2) at ({\ox+2*\dx},{\oy+\ty}) {};
    \node [scale=0.4, circle, draw=black, fill=white] (b3) at ({\ox+3*\dx},{\oy+\ty}) {};
    \node [scale=0.4, circle, draw=black, fill=black] (b4) at ({\ox+4*\dx},{\oy+\ty}) {};
    \draw (a0) -- ++(0,{\dy}) -| (a1);
    \draw (a3) -- ++(0,{\dy}) -| (a5);
    \draw (a4) -- ++(0,{2*\dy}) -| (a6);
    \draw (a2) -- (b2);
    \draw (b0) -- ++(0,{-2*\dy}) -| (b3);
    \draw (b1) -- ++(0,{-1*\dy}) -| (b4);
  \end{tikzpicture}
\end{wrapfigure}
A \emph{(two-colored) partition}, is a partition of  a finite set into disjoint subsets, the \emph{blocks}. Moreover, the points are distinguished into an \emph{upper} and a \emph{lower row} of points, each totally ordered.  Finally, each point is \emph{colored} either \emph{white} ($\circ$) or \emph{black} ($\bullet$). We say that these two colors are \emph{inverse} to each other. If each block  of a partition consists of exactly two points, we speak of a \emph{pair partition}. The two points of such a block are called its \emph{legs}. In this article we exclusively deal with pair partitions.\par 
We illustrate two-colored partitions by two lines of black and white dots connected by strings representing the blocks. \par

\vspace{-0.5em}
\subsection{Operations}
\label{subsection:operations}
On the set $\mathcal{P}^{\circ\bullet}$ of all two-colored partitions, one can introduce several operations: The \emph{tensor product} $\otimes$ concatenates two partitions horizontally. 
\par
If we swap the roles of the upper and the lower row of $p\in \mathcal{P}^{\circ\bullet}$, we call this the \emph{involution} $p^*$ of $p$. 
\par A pair $(p,p')$ from $\mathcal{P}^{\circ\bullet}$ is \emph{composable} if the upper row of $p$ and the lower row of $p'$ concur in size and coloration. In the \emph{composition} $pp'$ of $(p,p')$, we then take the points and colors on the lower row from $p$ and on the upper row from $p'$. The blocks of $pp'$ are obtained by vertical concatenation of $p$ and $p'$. 
\par
Reversing the order in each row of a partition is called \emph{reflection}. If that is followed by inversion of colors, we speak of the \emph{verticolor reflection} $\tilde{p}$ of $p\in\mathcal{P}^{\circ\bullet}$. 
\par
Multiple kinds of \emph{rotations} can be defined: Given $p\in \mathcal P^{\circ \bullet}$, we remove the leftmost point $\alpha$ on the upper row and add left to the points on the lower row a point $\beta$ of the inverse color of $\alpha$. The new point $\beta$ belongs to the same block $\alpha$ did before. The resulting partition is denoted by $p^{\rcurvearrowdown}$. Likewise, we can move the rightmost point on the upper row to the right end of the lower row, yielding $p^{\lcurvearrowdown}$. Rotating the outermost points from the lower to the upper row produces the partitions $p^{\lcurvearrowup}$ and $p^{\rcurvearrowup}$. \emph{Cyclic rotations} are defined by $p^{\circlearrowleft}\eqpd(p^{\rcurvearrowdown})^{\rcurvearrowup}$ and $p^{\circlearrowright}\eqpd(p^{\lcurvearrowup})^{\lcurvearrowdown}$.
\par  Lastly, if we \emph{erase} a set $S$ of points in $p\in \mathcal{P}^{\circ\bullet}$, we obtain the partition $E(p,S)$, which is constructed from $p$ by removing $S$ and combining all the remnants of the blocks which had a non-empty intersection with $S$ into one block.
\par
See \cite{TaWe15a} for examples of these operations. 
\vspace{-0.5em}
\subsection{Categories}
\label{section:introdcution-subsection:categories}
If a subset of $\mathcal{P}^{\circ\bullet}$ is closed under tensor products, involution and composition and if it contains $\PartIdenW$,  $\PartIdenB$, $\PartIdenLoBW$, and $\PartIdenLoWB$, it is called a \emph{category of partitions}. Categories are invariant with respect to all rotations and verticolor reflection. Note that they might not be closed under mere reflection. For any set $\mc G\subseteq\Cp$, we denote the smallest category containing $\mc G$ by $\langle \mc G\rangle$ and say that $\mc G$ \emph{generates} $\langle \mc G\rangle$. See  \cite{TaWe15a}. Categories of partitions (with uncolored points) and the above operations were introduced by Banica and Speicher in \cite{BaSp09}. 

		\nopagebreak
		
\section{Basic Concepts for Pair Partitions with Neutral Blocks}
		
\vspace{0.5em}
\noindent
\begin{minipage}{0.62\textwidth}
  \subsection{Cyclic Order}
  In a partition $p\in \mathcal P^{\circ\bullet}$, the lower row $L$ and the upper row $U$ are each natively equipped with a total order, $\leq_L$ and $\leq_U$ respectively. We additionally endow the entirety $L\cup U$ of the points of $p$ with a \emph{cyclic order}. It is uniquely determined by the following four conditions:
It induces $\leq_L$ on $L$, but the \emph{reverse} of $\leq_U$ on $U$, the minimum of $\leq_L$ succeeds the minimum of $\leq_U$, and the maximum of $\leq_L$ precedes the maximum of $\leq_U$.
\end{minipage}
\noindent
\begin{minipage}{0.38\textwidth}
  \vspace{-0.5em}
	\flushright
 \begin{tikzpicture}[scale=0.666]
    \def\dx{1}
    \def\ty{4}
    \def\ox{0}
    \def\oy{0}
    \def\dy{\ty / 4}
    \def\tx{7*\dx}
    \def\od{0.5}
    \draw [dotted] ({\ox-\od},{\oy+\ty}) --  ({\ox+4*\dx+\od},{\oy+\ty});
    \draw [dotted] ({\ox-\od},{\oy}) --  ({\ox+6*\dx+\od},{\oy});
    \node [scale=0.4, circle, draw=black, fill=black] (a0) at ({\ox+0*\dx},{\oy}) {};
    \node [scale=0.4, circle, draw=black, fill=white] (a1) at ({\ox+1*\dx},{\oy}) {};
    \node [scale=0.4, circle, draw=black, fill=black] (a2) at ({\ox+2*\dx},{\oy}) {};
    \node [scale=0.4, circle, draw=black, fill=white] (a3) at ({\ox+3*\dx},{\oy}) {};
    \node [scale=0.4, circle, draw=black, fill=black] (a4) at ({\ox+4*\dx},{\oy}) {};
    \node [scale=0.4, circle, draw=black, fill=black] (a5) at ({\ox+5*\dx},{\oy}) {};
    \node [scale=0.4, circle, draw=black, fill=white] (a6) at ({\ox+6*\dx},{\oy}) {};
    \node [scale=0.4, circle, draw=black, fill=black] (b0) at ({\ox+0*\dx},{\oy+\ty}) {};
    \node [scale=0.4, circle, draw=black, fill=white] (b1) at ({\ox+1*\dx},{\oy+\ty}) {};
    \node [scale=0.4, circle, draw=black, fill=black] (b2) at ({\ox+2*\dx},{\oy+\ty}) {};
    \node [scale=0.4, circle, draw=black, fill=white] (b3) at ({\ox+3*\dx},{\oy+\ty}) {};
    \node [scale=0.4, circle, draw=black, fill=black] (b4) at ({\ox+4*\dx},{\oy+\ty}) {};
    \draw [lightgray] (a0) -- ++(0,{\dy}) -| (a1);
    \draw [lightgray] (a3) -- ++(0,{\dy}) -| (a5);
    \draw [lightgray] (a4) -- ++(0,{2*\dy}) -| (a6);
    \draw [lightgray] (a2) -- (b2);
    \draw [lightgray] (b0) -- ++(0,{-2*\dy}) -| (b3);
    \draw [lightgray] (b1) -- ++(0,{-1*\dy}) -| (b4);
    	\draw[->] (b4) to [out=135, in=45] (b3);
	\draw[->] (b3) to [out=135, in=45] (b2);
	\draw[->] (b2) to [out=135, in=45] (b1);
	\draw[->] (b1) to [out=135, in=45] (b0);
	\draw[->] (b0) to [out=225, in=135] (a0);
	\draw[->] (a0) to [out=-45, in=225] (a1);
	\draw[->] (a1) to [out=-45, in=225] (a2);
	\draw[->] (a2) to [out=-45, in=225] (a3);
	\draw[->] (a3) to [out=-45, in=225] (a4);
	\draw[->] (a4) to [out=-45, in=225] (a5);
	\draw[->] (a5) to [out=-45, in=225] (a6);
	\draw[->] (a6) to [out=45, in=0] (b4);
        \node at (0.33,3.5) {$U$};
        \node at (0.33,0.5) {$L$};
        \node at (-0.5,2) {$p$};
        \draw[->] (3.5,2) arc (0:330:0.55);        
  \end{tikzpicture}
\end{minipage}
\noindent
\begin{minipage}{0.5\textwidth}
  \centering
  \begin{tikzpicture}[scale=0.666]
    \def\dx{1}
    \def\ty{4}
    \def\ox{0}
    \def\oy{0}
    \def\dy{\ty / 4}
    \def\tx{7*\dx}
    \def\od{0.5}
    \def\cx{0.3}
    \def\cy{0.3}    
    \draw [dotted] ({\ox-\od},{\oy+\ty}) --  ({\ox+4*\dx+\od},{\oy+\ty});
    \draw [dotted] ({\ox-\od},{\oy}) --  ({\ox+6*\dx+\od},{\oy});
    \node [scale=0.4, circle, draw=lightgray, fill=lightgray] (a0) at ({\ox+0*\dx},{\oy}) {};
    \node [scale=0.4, circle, draw=black, fill=white] (a1) at ({\ox+1*\dx},{\oy}) {};
    \node [scale=0.4, circle, draw=black, fill=black] (a2) at ({\ox+2*\dx},{\oy}) {};
    \node [scale=0.4, circle, draw=black, fill=white] (a3) at ({\ox+3*\dx},{\oy}) {};
    \node [scale=0.4, circle, draw=black, fill=black] (a4) at ({\ox+4*\dx},{\oy}) {};
    \node [scale=0.4, circle, draw=black, fill=black] (a5) at ({\ox+5*\dx},{\oy}) {};
    \node [scale=0.4, circle, draw=black, fill=white] (a6) at ({\ox+6*\dx},{\oy}) {};
    \node [scale=0.4, circle, draw=lightgray, fill=lightgray] (b0) at ({\ox+0*\dx},{\oy+\ty}) {};
    \node [scale=0.4, circle, draw=lightgray, fill=white] (b1) at ({\ox+1*\dx},{\oy+\ty}) {};
    \node [scale=0.4, circle, draw=lightgray, fill=lightgray] (b2) at ({\ox+2*\dx},{\oy+\ty}) {};
    \node [scale=0.4, circle, draw=black, fill=white] (b3) at ({\ox+3*\dx},{\oy+\ty}) {};
    \node [scale=0.4, circle, draw=black, fill=black] (b4) at ({\ox+4*\dx},{\oy+\ty}) {};
    \draw[lightgray] (a0) -- ++(0,{\dy}) -| (a1);
    \draw[lightgray] (a3) -- ++(0,{\dy}) -| (a5);
    \draw[lightgray] (a4) -- ++(0,{2*\dy}) -| (a6);
    \draw[lightgray] (a2) -- (b2);
    \draw[lightgray] (b0) -- ++(0,{-2*\dy}) -| (b3);
    \draw[lightgray] (b1) -- ++(0,{-1*\dy}) -| (b4);
    \node[yshift=-0.5cm] at (a0) {$\alpha$};
    \node[yshift=0.5cm] at (b3) {$\beta$};
    \draw[dashed] ({\ox+6*\dx+\od},{\oy-\cy})  -- ({\ox+\dx-\cx},{\oy-\cy}) -- ({\ox+\dx-\cx},{\oy+\cy}) -- ({\ox+6*\dx+\od},{\oy+\cy});
    \draw[dashed] ({\ox+4*\dx+\od},{\oy+\ty-\cy})  -- ({\ox+3*\dx-\cx},{\oy+\ty-\cy}) -- ({\ox+3*\dx-\cx},{\oy+\ty+\cy}) -- ({\ox+4*\dx+\od},{\oy+\ty+\cy});
    \node (lab) at ({\ox+\tx+1.5*\dx},{\oy+0.7*\ty}) {$\left]\alpha,\beta\right]_p$};
      \draw[densely dotted] (lab.west) to ({\ox+5*\dx},{\oy+\ty});
      \draw[densely dotted] (lab.west) to ({\ox+7*\dx},{\oy});      
  \end{tikzpicture}
\end{minipage}
\begin{minipage}{0.5\textwidth}
  \vspace{0.25em}
  In our convention of depicting the native ordering left to right on both rows, the cyclic order amounts to the counter-clockwise orientation. \par
 \hspace{0.5em} With respect to this cyclic order it makes sense to speak of \emph{intervals} like $[\alpha,\beta]_p$, $]\alpha,\beta]_p$, etc.\ for points $\alpha,\beta$ in $p$, even if $\alpha$ and $\beta$ are not both contained in the same row of $p$.
\end{minipage}
\begin{minipage}{0.7\textwidth}
  \vspace{0.5em}
\subsection{Connectedness}
\label{subsection:connectedness}
Blocks $B$ and $B'$ in $p\in \mathcal P^{\circ\bullet}$ are said to \emph{cross} if $B\neq B'$ and if there are pairwise distinct points $\alpha,\beta\in B$
   and $\gamma,\delta\in B'$ occurring in the sequence $(\alpha,\gamma,\beta,\delta)$ with respect to the cyclic order.
  If no two blocks cross in $p$, we say that $p$ is \emph{non-crossing}, in short: $p\in \mathcal {NC}^{\circ\bullet}$.  (See  \cite{TaWe15a} for all subcategories of $\mathcal {NC}^{\circ\bullet}$.)
\end{minipage}
\begin{minipage}{0.3\textwidth}
  \centering
\begin{tikzpicture}[scale=0.666]
    \def\dx{1}
    \def\ty{4}
    \def\ox{0}
    \def\oy{0}
    \def\dy{\ty / 4}
    \def\tx{7*\dx}
    \def\od{0.5}
    \def\cx{0.3}
    \def\cy{0.3}    
    \draw [dotted] ({\ox-\od},{\oy}) --  ({\ox+3*\dx+\od},{\oy});
    \node [scale=0.4, circle, draw=black, fill=white] (a0) at ({\ox+0*\dx},{\oy}) {};
    \node [scale=0.4, circle, draw=darkgray, fill=darkgray] (a1) at ({\ox+1*\dx},{\oy}) {};
    \node [scale=0.4, circle, draw=black, fill=black] (a2) at ({\ox+2*\dx},{\oy}) {};
    \node [scale=0.4, circle, draw=darkgray, fill=white] (a3) at ({\ox+3*\dx},{\oy}) {};
    \draw (a0) -- ++(0,{\dy}) -| (a2);
    \draw[gray] (a1) -- ++(0,{2*\dy}) -| (a3);
    \node [yshift=-0.5cm] at (a0) {$\alpha$};
    \node [yshift=-0.5cm] at (a1) {$\gamma$};
    \node [yshift=-0.5cm] at (a2) {$\beta$};
    \node [yshift=-0.5cm] at (a3) {$\delta$};
    \node [yshift=0.5cm,xshift=-0.5cm] at (a0) {$B$};
    \node [yshift=1cm,xshift=0.5cm] at (a3) {$B'$};            
\end{tikzpicture}
\end{minipage}
\begin{minipage}{0.4\textwidth}
  \centering
\begin{tikzpicture}[scale=0.666]
    \def\dx{1}
    \def\ty{4}
    \def\ox{0}
    \def\oy{0}
    \def\dy{\ty / 4}
    \def\tx{7*\dx}
    \def\od{0.5}
    \def\cx{0.3}
    \def\cy{0.3}    
    \draw [dotted] ({\ox-\od},{\oy}) --  ({\ox+5*\dx+\od},{\oy});
    \node [scale=0.4, circle, draw=darkgray, fill=white] (a0) at ({\ox+0*\dx},{\oy}) {};
    \node [scale=0.4, circle, draw=gray, fill=gray] (a1) at ({\ox+1*\dx},{\oy}) {};
    \node [scale=0.4, circle, draw=darkgray, fill=darkgray] (a2) at ({\ox+2*\dx},{\oy}) {};
    \node [scale=0.4, circle, draw=black, fill=black] (a3) at ({\ox+3*\dx},{\oy}) {};
    \node [scale=0.4, circle, draw=gray, fill=white] (a4) at ({\ox+4*\dx},{\oy}) {};
    \node [scale=0.4, circle, draw=black, fill=white] (a5) at ({\ox+5*\dx},{\oy}) {};
    \draw[darkgray] (a0) -- ++(0,{\dy}) -| (a2);
    \draw[black] (a3) -- ++(0,{\dy}) -| (a5);    
    \draw[gray] (a1) -- ++(0,{2*\dy}) -| (a4);
    \node [yshift=0.5cm,xshift=-0.5cm] at (a0) {$B$};
    \node [yshift=0.5cm,xshift=0.5cm] at (a5) {$B'$};
    \node at ({2.5*\dx+\ox},{\oy+2.5*\dy}) {$B_1=B_m$};                
\end{tikzpicture}
\end{minipage}
\begin{minipage}{0.6\textwidth}
    \vspace{0.5em}
\hspace{0.5em} We call the blocks $B$ and $B'$ \emph{connected} if $B=B'$, if $B$ and $B'$ cross, or if  there exist blocks $B_1,\ldots,B_m$ in $p$ such that, writing $B_0\eqpd B$ and $B_{m+1}\eqpd B'$, for every $i\in \mathbb N_0$ with $i\leq m$, block $B_i$ crosses block $B_{i+1}$.
\end{minipage}
\par\vspace{0.5em}
The classes of this equivalence relation are the \emph{connected components} of $p$. And we say that $p$ is \emph{connected} if it has only a single connected component. Erasing the complement of any connected component $S$ of $p$ yields the \emph{factor partition} of $S$.\par
  \vspace{1em}
\begin{minipage}{\textwidth}
  \begin{tikzpicture}[scale=0.666,baseline=1.333cm]
    \def\dx{1}
    \def\ty{4}
    \def\ox{0}
    \def\oy{0}
    \def\dy{\ty / 4}
    \def\tx{7*\dx}
    \def\od{0.5}
    \def\cx{0.3}
    \def\cy{0.3}    
    \draw [dotted] ({\ox-\od},{\oy+\ty}) --  ({\ox+4*\dx+\od},{\oy+\ty});
    \draw [dotted] ({\ox-\od},{\oy}) --  ({\ox+6*\dx+\od},{\oy});
    \node [scale=0.4, circle, draw=darkgray, fill=darkgray] (a0) at ({\ox+0*\dx},{\oy}) {};
    \node [scale=0.4, circle, draw=darkgray, fill=white] (a1) at ({\ox+1*\dx},{\oy}) {};
    \node [scale=0.4, circle, draw=black, fill=black] (a2) at ({\ox+2*\dx},{\oy}) {};
    \node [scale=0.4, circle, draw=gray, fill=white] (a3) at ({\ox+3*\dx},{\oy}) {};
    \node [scale=0.4, circle, draw=gray, fill=gray] (a4) at ({\ox+4*\dx},{\oy}) {};
    \node [scale=0.4, circle, draw=gray, fill=gray] (a5) at ({\ox+5*\dx},{\oy}) {};
    \node [scale=0.4, circle, draw=gray, fill=white] (a6) at ({\ox+6*\dx},{\oy}) {};
    \node [scale=0.4, circle, draw=black, fill=black] (b0) at ({\ox+0*\dx},{\oy+\ty}) {};
    \node [scale=0.4, circle, draw=black, fill=white] (b1) at ({\ox+1*\dx},{\oy+\ty}) {};
    \node [scale=0.4, circle, draw=black, fill=black] (b2) at ({\ox+2*\dx},{\oy+\ty}) {};
    \node [scale=0.4, circle, draw=black, fill=white] (b3) at ({\ox+3*\dx},{\oy+\ty}) {};
    \node [scale=0.4, circle, draw=black, fill=black] (b4) at ({\ox+4*\dx},{\oy+\ty}) {};
    \draw[darkgray] (a0) -- ++(0,{\dy}) -| (a1);
    \draw[gray] (a3) -- ++(0,{\dy}) -| (a5);
    \draw[gray] (a4) -- ++(0,{2*\dy}) -| (a6);
    \draw[black] (a2) -- (b2);
    \draw[black] (b0) -- ++(0,{-2*\dy}) -| (b3);
    \draw[black] (b1) -- ++(0,{-1*\dy}) -| (b4);
    \draw [dashed, gray] ({\ox+3*\dx-\cx},{\oy-\cy}) rectangle ({\ox+6*\dx+\cx},{\oy+\cy});
    \draw [dashed, darkgray] ({\ox+0*\dx-\cx},{\oy-\cy}) rectangle ({\ox+1*\dx+\cx},{\oy+\cy});
        \draw [dashed] ({\ox+2*\dx-\cx},{\oy-\cy}) rectangle ({\ox+2*\dx+\cx},{\oy+\cy});
        \draw [dashed] ({\ox+0*\dx-\cx},{\oy+\ty-\cy}) rectangle ({\ox+4*\dx+\cx},{\oy+\ty+\cy});
  \end{tikzpicture}
$=$
  \begin{tikzpicture}[scale=0.666,baseline=1.333cm]
    \def\dx{1}
    \def\ty{4}
    \def\ox{0}
    \def\oy{0}
    \def\dy{\ty / 4}
    \def\tx{7*\dx}
    \def\od{0.5}
    \def\cx{0.3}
    \def\cy{0.3}    
    \draw [dotted] ({\ox-\od},{\oy}) --  ({\ox+1*\dx+\od},{\oy});
    \draw [dotted] ({\ox-\od},{\oy+\ty}) --  ({\ox+1*\dx+\od},{\oy+\ty});    
    \node [scale=0.4, circle, draw=darkgray, fill=darkgray] (a0) at ({\ox+0*\dx},{\oy}) {};
    \node [scale=0.4, circle, draw=darkgray, fill=white] (a1) at ({\ox+1*\dx},{\oy}) {};
    \draw[darkgray] (a0) -- ++(0,{\dy}) -| (a1);
  \end{tikzpicture}
  $\otimes$
        \begin{tikzpicture}[scale=0.666,baseline=1.333cm]
    \def\dx{1}
    \def\ty{4}
    \def\ox{0}
    \def\oy{0}
    \def\dy{\ty / 4}
    \def\tx{7*\dx}
    \def\od{0.5}
    \def\cx{0.3}
    \def\cy{0.3}    
    \draw [dotted] ({\ox-\od},{\oy+\ty}) --  ({\ox+4*\dx+\od},{\oy+\ty});
    \draw [dotted] ({\ox-\od},{\oy}) --  ({\ox+4*\dx+\od},{\oy});
    \node [scale=0.4, circle, draw=black, fill=white] (a2) at ({\ox+2*\dx},{\oy}) {};
    \node [scale=0.4, circle, draw=black, fill=black] (b0) at ({\ox+0*\dx},{\oy+\ty}) {};
    \node [scale=0.4, circle, draw=black, fill=white] (b1) at ({\ox+1*\dx},{\oy+\ty}) {};
    \node [scale=0.4, circle, draw=black, fill=black] (b2) at ({\ox+2*\dx},{\oy+\ty}) {};
    \node [scale=0.4, circle, draw=black, fill=white] (b3) at ({\ox+3*\dx},{\oy+\ty}) {};
    \node [scale=0.4, circle, draw=black, fill=black] (b4) at ({\ox+4*\dx},{\oy+\ty}) {};
    \draw[black] (a2) -- (b2);
    \draw[black] (b0) -- ++(0,{-2*\dy}) -| (b3);
    \draw[black] (b1) -- ++(0,{-1*\dy}) -| (b4);
  \end{tikzpicture}
  $\otimes$
        \begin{tikzpicture}[scale=0.666,baseline=1.333cm]
    \def\dx{1}
    \def\ty{4}
    \def\ox{0}
    \def\oy{0}
    \def\dy{\ty / 4}
    \def\tx{7*\dx}
    \def\od{0.5}
    \def\cx{0.3}
    \def\cy{0.3}    
    \draw [dotted] ({\ox-\od},{\oy+\ty}) --  ({\ox+3*\dx+\od},{\oy+\ty});
    \draw [dotted] ({\ox-\od},{\oy}) --  ({\ox+3*\dx+\od},{\oy});
    \node [scale=0.4, circle, draw=gray, fill=white] (a3) at ({\ox+0*\dx},{\oy}) {};
    \node [scale=0.4, circle, draw=gray, fill=gray] (a4) at ({\ox+1*\dx},{\oy}) {};
    \node [scale=0.4, circle, draw=gray, fill=gray] (a5) at ({\ox+2*\dx},{\oy}) {};
    \node [scale=0.4, circle, draw=gray, fill=white] (a6) at ({\ox+3*\dx},{\oy}) {};
    \draw[gray] (a3) -- ++(0,{\dy}) -| (a5);
    \draw[gray] (a4) -- ++(0,{2*\dy}) -| (a6);
  \end{tikzpicture}
\end{minipage}
\par
\pagebreak
\begin{wrapfigure}[9]{l}{7cm}
  \begin{tikzpicture}[scale=0.666]
    \def\dx{1}
    \def\ty{4}
    \def\ox{0}
    \def\oy{0}
    \def\dy{\ty / 4}
    \def\tx{7*\dx}
    \def\od{0.5}
    \def\cx{0.3}
    \def\cy{0.3}        
    \draw [dotted] ({\ox-\od},{\oy+\ty}) --  ({\ox+4*\dx+\od},{\oy+\ty});
    \draw [dotted] ({\ox-\od},{\oy}) --  ({\ox+6*\dx+\od},{\oy});
    \node [scale=0.4, circle, draw=black, fill=black] (a0) at ({\ox+0*\dx},{\oy}) {};
    \node [scale=0.4, circle, draw=lightgray, fill=white] (a1) at ({\ox+1*\dx},{\oy}) {};
    \node [scale=0.4, circle, draw=lightgray, fill=lightgray] (a2) at ({\ox+2*\dx},{\oy}) {};
    \node [scale=0.4, circle, draw=lightgray, fill=white] (a3) at ({\ox+3*\dx},{\oy}) {};
    \node [scale=0.4, circle, draw=darkgray, fill=darkgray] (a4) at ({\ox+4*\dx},{\oy}) {};
    \node [scale=0.4, circle, draw=darkgray, fill=darkgray] (a5) at ({\ox+5*\dx},{\oy}) {};
    \node [scale=0.4, circle, draw=darkgray, fill=white] (a6) at ({\ox+6*\dx},{\oy}) {};
    \node [scale=0.4, circle, draw=black, fill=black] (b0) at ({\ox+0*\dx},{\oy+\ty}) {};
    \node [scale=0.4, circle, draw=black, fill=white] (b1) at ({\ox+1*\dx},{\oy+\ty}) {};
    \node [scale=0.4, circle, draw=black, fill=black] (b2) at ({\ox+2*\dx},{\oy+\ty}) {};
    \node [scale=0.4, circle, draw=lightgray, fill=white] (b3) at ({\ox+3*\dx},{\oy+\ty}) {};
    \node [scale=0.4, circle, draw=darkgray, fill=darkgray] (b4) at ({\ox+4*\dx},{\oy+\ty}) {};
    \draw[lightgray] (a0) -- ++(0,{\dy}) -| (a1);
    \draw[lightgray] (a3) -- ++(0,{\dy}) -| (a5);
    \draw[lightgray] (a4) -- ++(0,{2*\dy}) -| (a6);
    \draw[lightgray] (a2) -- (b2);
    \draw[lightgray] (b0) -- ++(0,{-2*\dy}) -| (b3);
    \draw[lightgray] (b1) -- ++(0,{-1*\dy}) -| (b4);
    \draw[darkgray,dashed] ({\ox+6*\dx+\od},{\oy-\cy})  -- ({\ox+4*\dx-\cx},{\oy-\cy}) -- ({\ox+4*\dx-\cx},{\oy+\cy}) -- ({\ox+6*\dx+\od},{\oy+\cy});
    \draw[darkgray,dashed] ({\ox+4*\dx+\od},{\oy+\ty-\cy})  -- ({\ox+4*\dx-\cx},{\oy+\ty-\cy}) -- ({\ox+4*\dx-\cx},{\oy+\ty+\cy}) -- ({\ox+4*\dx+\od},{\oy+\ty+\cy});
    \draw[black,dashed] ({\ox+0*\dx-\od},{\oy+\ty+\cy})  -- ({\ox+2*\dx+\cx},{\oy+\ty+\cy}) -- ({\ox+2*\dx+\cx},{\oy+\ty-\cy}) -- ({\ox+0*\dx-\od},{\oy+\ty-\cy});
    \draw[black,dashed] ({\ox+0*\dx-\od},{\oy+\cy})  -- ({\ox+0*\dx+\cx},{\oy+\cy}) -- ({\ox+0*\dx+\cx},{\oy-\cy}) -- ({\ox+0*\dx-\od},{\oy-\cy});
    \node (labA) at ({\ox-2*\dx},{\oy+0.25*\ty}) {$A$};
    \node (labB) at ({\ox+\tx+0.75*\dx},{\oy+0.75*\ty}) {$B$};
    \draw[densely dotted] (labA.east) to ({\ox+(-1)*\dx},{\oy+\ty});
    \draw[densely dotted] (labA.east) to ({\ox+(-1)*\dx},{\oy});          
    \draw[densely dotted] (labB.west) to ({\ox+5*\dx},{\oy+\ty});
    \draw[densely dotted] (labB.west) to ({\ox+7*\dx},{\oy});      
    \node at ({\ox+0.5*6*\dx},{\oy-1.25*\dy}) {$\sigma_p(A)=\sigma_p(B)=0$};
  \end{tikzpicture}
\end{wrapfigure}
  \subsection{Color Sum and Neutral Sets}
\label{subsection_neutral_subsets_and_the_color_sum}
The \emph{normalized color} of a point is simply its color in the case of a lower point, but the inverse of its color in the case of an upper point. We think of it as the color the point would have if rotated to the lower line.
\par
For any set $S$ of points in $p\in\mathcal{P}^{\circ\bullet}$, the \emph{color sum} $\sigma_p(S)$  is given by the difference between the numbers of points in $S$ of normalized color $\bullet$ and of normalized color $\circ$.
\par
A \emph{neutral} set $S$ of points of $p$ is a null set of the signed measure $\sigma_p$, i.e.\ one with $\sigma_p(S)=0$. Categories of partitions are closed under erasing neutral intervals: Simply use compositions with the pair partitions $\PartIdenLoBW$ and $\PartIdenLoWB$ and their involutions. \par

            \begin{definition}
		We say that $p\in\mathcal{P}^{\circ\bullet}$ is a \emph{pair partition with neutral blocks},  $p\in\mathcal{P}^{\circ\bullet}_{2,\mathrm{nb}}$ in short, if $p$ is a pair partition and all its blocks are neutral.\par 
              \end{definition}
              For example, the partition depicted in Section~\ref{subsection:two-colored-partitions} is a pair partition with neutral blocks. The set $\mathcal P^{\circ\bullet}_{2,\mathrm{nb}}=\left\langle\PartCrossWW\right\rangle$ is in fact a category of partitions, generated by the crossing partition. (See $\mathcal O_{\mathrm{grp,loc}}$ in \cite[Proposition~3.3, Lemma~8.2, Theorem~ 8.3]{TaWe15a}.) \par
 \subsection{Sectors}
              \begin{wrapfigure}[8]{r}{7cm}
                \centering
                \vspace{-0.25em}
  \begin{tikzpicture}[scale=0.666]
    \def\dx{1}
    \def\ty{4}
    \def\ox{0}
    \def\oy{0}
    \def\dy{\ty / 4}
    \def\tx{7*\dx}
    \def\od{0.5}
    \def\cx{0.3}
    \def\cy{0.3}            
    \draw [dotted] ({\ox-\od},{\oy+\ty}) --  ({\ox+4*\dx+\od},{\oy+\ty});
    \draw [dotted] ({\ox-\od},{\oy}) --  ({\ox+6*\dx+\od},{\oy});
    \node [scale=0.4, circle, draw=lightgray, fill=lightgray] (a0) at ({\ox+0*\dx},{\oy}) {};
    \node [scale=0.4, circle, draw=lightgray, fill=white] (a1) at ({\ox+1*\dx},{\oy}) {};
    \node [scale=0.4, circle, draw=black, fill=black, thick] (a2) at ({\ox+2*\dx},{\oy}) {};
    \node [scale=0.4, circle, draw=black, fill=white] (a3) at ({\ox+3*\dx},{\oy}) {};
    \node [scale=0.4, circle, draw=black, fill=black] (a4) at ({\ox+4*\dx},{\oy}) {};
    \node [scale=0.4, circle, draw=black, fill=black] (a5) at ({\ox+5*\dx},{\oy}) {};
    \node [scale=0.4, circle, draw=black, fill=white] (a6) at ({\ox+6*\dx},{\oy}) {};
    \node [scale=0.4, circle, draw=lightgray, fill=lightgray] (b0) at ({\ox+0*\dx},{\oy+\ty}) {};
    \node [scale=0.4, circle, draw=lightgray, fill=white] (b1) at ({\ox+1*\dx},{\oy+\ty}) {};
    \node [scale=0.4, circle, draw=black, fill=black, thick] (b2) at ({\ox+2*\dx},{\oy+\ty}) {};
    \node [scale=0.4, circle, draw=black, fill=white] (b3) at ({\ox+3*\dx},{\oy+\ty}) {};
    \node [scale=0.4, circle, draw=black, fill=black] (b4) at ({\ox+4*\dx},{\oy+\ty}) {};
    \draw [lightgray] (a0) -- ++(0,{\dy}) -| (a1);
    \draw [lightgray] (a3) -- ++(0,{\dy}) -| (a5);
    \draw [lightgray] (a4) -- ++(0,{2*\dy}) -| (a6);
    \draw [lightgray] (b0) -- ++(0,{-2*\dy}) -| (b3);
    \draw [lightgray] (b1) -- ++(0,{-1*\dy}) -| (b4);
    \draw [thick] (a2) -- (b2);
    \draw[dashed] ({\ox+6*\dx+\od},{\oy-\cy})  -- ({\ox+2*\dx-\cx},{\oy-\cy}) -- ({\ox+2*\dx-\cx},{\oy+\cy}) -- ({\ox+6*\dx+\od},{\oy+\cy});
    \draw[dashed] ({\ox+4*\dx+\od},{\oy+\ty-\cy})  -- ({\ox+2*\dx-\cx},{\oy+\ty-\cy}) -- ({\ox+2*\dx-\cx},{\oy+\ty+\cy}) -- ({\ox+4*\dx+\od},{\oy+\ty+\cy});
    \node (lab) at ({\ox+\tx+\dx},{\oy+0.5*\ty}) {$S$};
      \draw[densely dotted] (lab.west) to ({\ox+5*\dx},{\oy+\ty});
      \draw[densely dotted] (lab.west) to ({\ox+7*\dx},{\oy});
      \node at ({\ox+2*\dx-1.75*\cx},{\oy+0.375*\ty}) {$\partial S$};
  \end{tikzpicture}
 	\end{wrapfigure}
   If a proper subset $S$ of points in $p\in\mathcal{P}^{\circ\bullet}$ is an interval with respect to the cyclic order, we refer to the set comprising exactly the first point of $S$  and the last point of $S$ as the \emph{boundary} $\partial S$ of $S$.  Also, define $\inte (S)\eqpd S \backslash \partial S$, the \emph{interior} of the set $S$.\par We call $S$ a \emph{sector} in $p$ if $\partial S$ is a block of $p$. A sector $S'$ in $p$ with $S'\subseteq S$ is called a \emph{subsector} of $S$.
  \par

  \section{Definition\texorpdfstring{ of $\mc S_w$}{}, Distinctness and Set Relationships \texorpdfstring{\\{[Main Theorem~\ref*{theorem:main_1}~{\normalfont\ref*{item:main_1-2}}]}}{}}
  \label{section:definition_set_relationships}
The central result of this article concerns the following sets of  partitions:
\begin{definition}
	\label{definition:the_three_families_of_sets_of_neutral_pair_partitions}
	For every $w\in\mathbb{N}_0$, denote by
 $\mathcal{S}_w$ the set of all partitions $p\in\mathcal P^{\circ\bullet}_{2,\mathrm{nb}}$ such that $\sigma_p(S)\in w\mathbb{Z}$ for all sectors $S$ in $p$.
\end{definition}
Part \ref{item:main_1-2} of Main~Theorem~\ref{theorem:main_1} follows immediately from this definition. Moreover, we observe that the choices $w\integers$ or $\integers/w\integers$ as an index for $\mc S_w$ would be natural alternatives. This fact, together with observations made in Section~\ref{section:concluding_remarks} and \cite{MaWe18b}, justify the title of the present article.
\begin{proposition} 
	\label{proposition:set_relationships_between_the_categories}
	The following identities and inclusions are valid for all $w\in\mathbb{N}$:
        \begin{align*}
          \mathcal{S}_0 =\bigcap_{w'\in\mathbb{N}} \mathcal{S}_{w'} \subseteq \mathcal{S}_w\subseteq \mathcal{S}_1=\mathcal P^{\circ\bullet}_{2,\mathrm{nb}}=\langle\PartCrossWW\rangle
        \end{align*}
	Furthermore, for all $w,w'\in\mathbb{N}$ holds
	\begin{align*}
	w'\integers \subseteq w\integers \implies \mathcal{S}_{w'} \subseteq\mathcal{S}_{w}.
	\end{align*}
        Moreover, $\mc S_w\neq \mc S_{w'}$ for all $w,w'\in \nnint$ with $w\neq w'$.
 \end{proposition}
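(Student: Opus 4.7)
The plan is to read off the inclusions and equalities directly from Definition~\ref{definition:the_three_families_of_sets_of_neutral_pair_partitions} and then establish distinctness by exhibiting, for each $w\in\pint$, an explicit partition whose sector color sums carry $w$ as a visible invariant.

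The set-theoretic part is essentially tautological. The identity $\mathcal{S}_0=\bigcap_{w'\in\pint}\mathcal{S}_{w'}$ reduces, sector by sector, to the arithmetic fact $\{0\}=\bigcap_{w'\in\pint}w'\integers$. The equality $\mathcal{S}_1=\mathcal{P}^{\circ\bullet}_{2,\mathrm{nb}}$ follows from $1\integers=\integers$, and $\mathcal{P}^{\circ\bullet}_{2,\mathrm{nb}}=\langle\PartCrossWW\rangle$ was already recorded in Section~\ref{subsection_neutral_subsets_and_the_color_sum}. The monotonicity implication $w'\integers\subseteq w\integers\Rightarrow\mathcal{S}_{w'}\subseteq\mathcal{S}_w$ is immediate from the definition and, combined with $0\integers=\{0\}\subseteq w\integers\subseteq\integers$, produces the chain $\mathcal{S}_0\subseteq\mathcal{S}_w\subseteq\mathcal{S}_1$.

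For distinctness I would use, for each $w\in\pint$, the partition $p_w$ displayed in Main Theorem~\ref{theorem:main_1}\ref{item:main_1-3}, namely two outer black--white pair blocks framing $w$ vertical white--white pair blocks. A direct traversal in cyclic order shows that every sector bounded by one of the vertical blocks has interior containing equally many normalized black as normalized white points (color sum $0$), while the two sectors bounded by an outer block have interiors consisting of the $w$ lower whites or the $w$ upper whites (plus two canceling endpoints), giving color sums $-w$ and $+w$ respectively. Hence the complete list of sector color sums of $p_w$ is $\{-w,0,+w\}$, so $p_w\in\mathcal{S}_{w'}$ if and only if $w'\mid w$.

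With this in hand, distinctness is immediate. Given distinct $w,w'\in\nnint$: if one of them is $0$, say $w=0$, then $p_{w'}\in\mathcal{S}_{w'}\setminus\mathcal{S}_0$ separates the two; otherwise both $w,w'\in\pint$ and, since $w\neq w'$, $w$ and $w'$ cannot both divide each other, so, swapping their roles if necessary, we may assume $w'\nmid w$, and then $p_w\in\mathcal{S}_w\setminus\mathcal{S}_{w'}$. The only nontrivial labor is the color-sum bookkeeping for $p_w$; I expect it to be completely routine, with no conceptual obstacle.
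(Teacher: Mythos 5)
Your proof is correct and follows essentially the same route as the paper: the inclusions and identities are read off directly from Definition~\ref{definition:the_three_families_of_sets_of_neutral_pair_partitions} (together with the quoted fact $\Cppnb=\langle\PartCrossWW\rangle$), and distinctness is witnessed by the partitions displayed in Main Theorem~\ref{theorem:main_1}, exactly as in the paper's item (4). The only difference is that you make the sector color-sum bookkeeping explicit (sector sums $\{-w,0,+w\}$, hence $p_w\in\mc S_{w'}$ if and only if $w'\mid w$), a verification the paper leaves implicit; your computation is accurate.
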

\begin{proof}
	\begin{enumerate}[wide, labelwidth = !]
			\item For every $p\in \mathcal P^{\circ\bullet}_{2,\mathrm{nb}}$, all its sectors $S$ necessarily satisfy $\sigma_p(S)\in\mathbb Z$ by definition. Hence, $\mathcal{S}_1=\mathcal P^{\circ\bullet}_{2,\mathrm{nb}}$ follows. In \cite[Proposition~3.3, Lemma~8.2, Theorem~8.3]{TaWe15a}, it was proven that the category $\mathcal P^{\circ\bullet}_{2,\mathrm{nb}}$ is generated by $\PartCrossWW$.
			\item For $w,w'\in\mathbb{N}$, a partition $p\in \mathcal P^{\circ\bullet}_{2,\mathrm{nb}}$ and a sector $S$ in $p$, the relation $\sigma_p(S)\in w'\mathbb{Z}$ implies $\sigma_p(S)\in w\mathbb{Z}$ if  $w'\integers\subseteq w\integers$. Hence, $\mathcal{S}_{w'} \subseteq\mathcal{S}_{w}$.
		\item For every integer $w\in\mathbb Z$ holds $w=0$ if and only if $w\integers \subseteq w'\integers$ is true for all $w'\in\mathbb N$. We conclude $\bigcap_{w'\in \mathbb{N}}\mathcal{S}_{w'}=\mathcal{S}_{0}$.
                \item The partitions listed in Main~Theorem~\ref{theorem:main_1} provide examples of elements of the sets $\mathcal S_w$ and prove $\mc S_{w'}\neq \mc S_{w}$ for $w,w'\in \nnint$ with $w'\neq w$.\qedhere
	\end{enumerate}
      \end{proof}

\section{Category Property\texorpdfstring{ of $\mathcal{S}_w$\\{[Main Theorem~\ref*{theorem:main_1}~{\normalfont\ref*{item:main_1-1}}]}}{}}
\label{section:invariance}
To show that $\mathcal S_w$ is a category of partitions for every $w\in \mathbb N_0$, we need to check that the category operations preserve the defining property of $\mathcal S_w$. The proof is facilitated by two simple facts:
\begin{lemma}
  \label{lemma:anti_symmetry_of_sector_color_sums}
  If $S$ and $S'$ are the two sectors of a block in $p\in \mathcal P^{\circ\bullet}_{2,\mathrm{nb}}$, then
  \begin{align*}
   \sigma_p(S')=-\sigma_p(S). 
  \end{align*}
\end{lemma}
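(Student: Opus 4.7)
The plan is to exploit the additivity of the signed measure $\sigma_p$ together with the fact that in a pair partition with neutral blocks every block, and hence the entire point set, has color sum zero.

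First I would identify how the two sectors $S$ and $S'$ of a single block $B$ sit relative to each other in the cyclic order. Since $B = \{\alpha, \beta\}$ has exactly two legs (as $p$ is a pair partition), the cyclic order on the points of $p$ is split by $\alpha$ and $\beta$ into exactly two closed arcs, both of which have $B$ as their boundary. These arcs are precisely the two sectors of $B$, so $S \cup S'$ equals the entire point set of $p$ and $S \cap S' = \partial S = \partial S' = B$.

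Next I would apply inclusion–exclusion for the signed measure $\sigma_p$:
\begin{align*}
\sigma_p(S) + \sigma_p(S') = \sigma_p(S \cup S') + \sigma_p(S \cap S').
\end{align*}
The right-hand side vanishes: $\sigma_p(S \cap S') = \sigma_p(B) = 0$ because $B$ is a neutral block by hypothesis, and $\sigma_p(S \cup S')$ equals the total color sum over all points of $p$, which is $0$ as it is a sum of the (vanishing) color sums over all blocks. Rearranging yields $\sigma_p(S') = -\sigma_p(S)$, as required.

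There is no real obstacle here; the only subtlety worth spelling out is the geometric claim that the two sectors of a block cover the whole partition with overlap exactly equal to the block itself, which is immediate from the definition of sector (boundary consists of the first and last point of an interval in the cyclic order) once one observes that a block of a pair partition has precisely two points.
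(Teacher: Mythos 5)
Your proof is correct and takes essentially the same route as the paper's: both arguments rest on additivity of $\sigma_p$ together with the two facts that the whole point set is neutral (being a disjoint union of neutral blocks) and that the boundary block $\partial S=\partial S'$ is itself neutral. The paper merely phrases the decomposition as a partition of the point set into $\inte(S)$, $\partial S$, and $\inte(S')$ instead of your inclusion--exclusion on $S\cup S'$ and $S\cap S'$, which is only a cosmetic difference.
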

\begin{proof}
  The entirety $P_p$ of the points of $p$ is partitioned by the blocks of $p$. Since those are all neutral due to $p\in\mathcal P^{\circ\bullet}_{2,\mathrm{nb}}$, so is their union: $\sigma_p(P_p)=0$.
  The assumptions on $S$ and $S'$ imply that $P_p$ is also partitioned by $\mathrm{int}(S)$, $\partial S=\partial S'$, and $\mathrm{int}(S')$.   Now, $\sigma_p(\partial S)=\sigma_p(\partial S')=0$ yields the assertion.  
\end{proof}
In conclusion, it suffices to check $\sigma_p(S)\in w\mathbb Z$ for only a single and arbitrary one of the two sectors $S$ of each block in a given $p\in\mathcal P^{\circ\bullet}_{2,\mathrm{nb}}$ in order to prove $p\in\mathcal{S}_w$. This can be done comfortably:
\begin{lemma}
 \label{lemma:color_sum_decomposition}
 If $\alpha,\alpha'$ and $\beta$ are arbitrary points in $p\in\mathcal P^{\circ\bullet}_{2,\mathrm{nb}}$, then
 \begin{align*}
 \sigma_p\left(\left]\alpha,\alpha'\right]_p\right)= \sigma_p\left(\left]\alpha,\beta\right]_p\right)+\sigma_p\left(\left]\beta,\alpha'\right]_p\right),
 \end{align*}
 regardless of the order in which $\alpha,\alpha'$ and $\beta$ appear in $p$.\par
 Similar results hold for other kinds of intervals and other decompositions, e.g., $\sigma_p\left(\left]\alpha,\alpha'\right[_p\right)= \sigma_p\left(\left]\alpha,\beta\right]_p\right)+\sigma_p\left(\left]\beta,\alpha'\right[_p\right)$ for points $\alpha$, $\alpha'$, $\beta$ in $p \in \mathcal P^{\circ\bullet}_{2,\mathrm{nb}}$.
\end{lemma}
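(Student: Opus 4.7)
The proof hinges on two structural facts. First, $\sigma_p$ is a signed measure on the finite set $P_p$ of all points of $p$ (by its definition as a signed count), so it is finitely additive on disjoint unions. Second, and crucially, $\sigma_p(P_p)=0$: the blocks of $p$ partition $P_p$, and by hypothesis $p\in\mathcal P^{\circ\bullet}_{2,\mathrm{nb}}$ every block is neutral. Combining these, for any decomposition $P_p=A\sqcup B$ one obtains $\sigma_p(A)=-\sigma_p(B)$. In particular, for any two distinct points $x,y$ in $p$, the cyclic intervals $]x,y]_p$ and $]y,x]_p$ partition $P_p$, yielding the complement relation
\begin{align*}
	\sigma_p\bigl(]x,y]_p\bigr)+\sigma_p\bigl(]y,x]_p\bigr)=0.
\end{align*}

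The plan is then a short case analysis on the relative cyclic positions of the three points $\alpha,\beta,\alpha'$. Assume first that they are pairwise distinct. If they appear in the cyclic order $(\alpha,\beta,\alpha')$, then the interval $]\alpha,\alpha']_p$ is the disjoint union $]\alpha,\beta]_p \sqcup \,]\beta,\alpha']_p$, and the claim is immediate from additivity. If they appear in cyclic order $(\alpha,\alpha',\beta)$, then instead $]\alpha,\beta]_p=\,]\alpha,\alpha']_p \sqcup\, ]\alpha',\beta]_p$, giving $\sigma_p(]\alpha,\alpha']_p)=\sigma_p(]\alpha,\beta]_p)-\sigma_p(]\alpha',\beta]_p)$; the complement relation applied to the second term produces $+\sigma_p(]\beta,\alpha']_p)$ on the right-hand side, as desired. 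The case $(\beta,\alpha,\alpha')$ is handled symmetrically by first decomposing $]\beta,\alpha']_p=\,]\beta,\alpha]_p\sqcup\,]\alpha,\alpha']_p$ and then invoking the complement relation on $]\beta,\alpha]_p$.

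Degenerate configurations in which two of the three points coincide are settled by adopting the convention $]x,x]_p=\emptyset$ (equivalently, $]x,x]_p=P_p\setminus\{x\}$ — both conventions work because the total measure is zero): each case then collapses to a trivial identity of the form $\sigma_p(]\alpha,\alpha']_p)=\sigma_p(]\alpha,\alpha']_p)+0$. Finally, the variants for half-open intervals $]\alpha,\alpha'[_p$, closed intervals $[\alpha,\alpha']_p$, and mixed decompositions are obtained from the principal identity by correcting with the measure $\sigma_p(\{\gamma\})\in\{+1,-1\}$ of individual endpoint points $\gamma$, which is well-defined because $\sigma_p$ is a signed measure on singletons.

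There is no substantive obstacle here beyond keeping the cyclic case analysis tidy; the entire argument rests on the signed-measure additivity of $\sigma_p$ together with the vanishing $\sigma_p(P_p)=0$ inherited from the neutral-block hypothesis. The lemma should be viewed simply as the assertion that $\sigma_p$ descends from the set $P_p$ to a well-defined additive invariant on cyclic intervals.
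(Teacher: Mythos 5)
Your proof is correct and follows essentially the same route as the paper: the evident disjoint-union case for cyclic intervals plus the neutrality of the whole partition (your complement relation $\sigma_p(]x,y]_p)+\sigma_p(]y,x]_p)=0$ is exactly the paper's use of $\sigma_p(]\alpha',\alpha']_p)=0$), followed by the same case analysis on the cyclic position of $\beta$. The only slip is the parenthetical claim that the convention $]x,x]_p=P_p\setminus\{x\}$ also works in the degenerate cases --- that set has color sum $-\sigma_p(\{x\})\neq 0$; your primary convention $]x,x]_p=\emptyset$ (or the paper's implicit $]x,x]_p=P_p$) is the one that does.
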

\begin{proof}
  This lemma is clear if the order is $(\alpha,\beta,\alpha')$. If it is $(\alpha,\alpha',\beta)$ instead, we promptly employ the evident case of the lemma to find $\sigma_p\left(\left]\alpha,\beta\right]_p\right)=\sigma_p\left(\left]\alpha,\alpha'\right]_p\right)+\sigma_p\left(\left]\alpha',\beta\right]\right)$ and, thus, $\sigma_p\left(\left]\alpha,\beta\right]_p\right)+\sigma_p\left(\left]\beta,\alpha'\right]_p\right)=\sigma_p\left(\left]\alpha,\alpha'\right]_p\right)+\sigma_p\left(\left]\alpha',\alpha'\right]_p\right)$. The claim then follows from $\sigma_p\left(\left]\alpha',\alpha'\right]_p\right)=0$, i.e.\ the fact that $p$ as a whole is neutral.
\end{proof}

\begin{proposition}
	\label{proposition:category_property_of_the_sets_S_w}
	For all $w\in\mathbb{N}_0$, the set $\mathcal{S}_{w}$ is a category of partitions. 
\end{proposition}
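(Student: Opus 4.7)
The plan is to verify the category axioms for $\mathcal{S}_w$: closure under tensor product, involution, and composition, plus containment of the identity partitions $\PartIdenW, \PartIdenB, \PartIdenLoBW, \PartIdenLoWB$. The last is immediate, since each identity partition admits only sectors with empty interior and hence color sum zero. For $p \otimes p'$, every block belongs to exactly one factor; of the two sectors of a block of $p$ in $p \otimes p'$, one coincides with a sector in $p$, while the other equals the complementary sector in $p$ combined with the entire set of points of $p'$. The latter augmentation contributes $\sigma(p') = 0$ by neutrality of the blocks of $p'$, so both color sums equal those of the sectors in $p$ and lie in $w\mathbb{Z}$. For the involution $p^*$, sectors map bijectively to sectors while every normalized color is inverted, so color sums merely negate; as $w\mathbb{Z}$ is closed under negation, the condition is preserved.

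The substantive case is composition. Let $p, p' \in \mathcal{S}_w$ be composable and set $q := pp'$. Since $\Cppnb$ is already a category, $q \in \Cppnb$, and it remains only to check sector color sums. Fix a block $B$ of $q$ with legs $\alpha, \beta$ and one of its sectors $S$; the other is handled by Lemma~\ref{lemma:anti_symmetry_of_sector_color_sums}. The block $B$ arises by erasure of the middle row from a chain of alternating blocks of $p$ and $p'$: there exist points $\alpha = \alpha_0, \alpha_1, \ldots, \alpha_n = \beta$ with $\alpha_1, \ldots, \alpha_{n-1}$ in the middle row $M$, such that each pair $\{\alpha_{i-1}, \alpha_i\}$ is a block of $p$ or $p'$ in alternating fashion. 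To each such pair I assign the sector $T_i := [\alpha_{i-1}, \alpha_i]$ of that block in the appropriate factor, orienting each $T_i$ so that $(T_i)_{i=1}^n$ collectively traces the arc $S$ along the outer boundary of $q$.

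The key identity is then $\sigma_q(S) = \sum_{i=1}^n \sigma_{\ast_i}(T_i)$, with $\ast_i \in \{p, p'\}$; since each summand lies in $w\mathbb{Z}$ by hypothesis, so does the left-hand side. The identity itself is verified by iterated application of Lemma~\ref{lemma:color_sum_decomposition} in each factor, combined with two cancellations: each internal middle point $\alpha_i$ (with $1 \leq i \leq n-1$) enters both $T_i$ and $T_{i+1}$ as a boundary point but has opposite normalized colors in $p$ and $p'$ (upper versus lower), so its contributions cancel; and the cyclic order of $p$ on $M$ is the reverse of that of $p'$ on $M$, causing the interior $M$-segments picked up by consecutive sectors $T_i$ and $T_{i+1}$ to cancel pairwise. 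What survives is exactly the color sum of the portions of the $T_i$'s lying in the lower row of $p$ and the upper row of $p'$, namely $\sigma_q(S)$. The main obstacle is the bookkeeping of orientations along the chain and the case analysis over whether $\alpha$ and $\beta$ lie in the lower row of $p$ or the upper row of $p'$; these cases are essentially symmetric but each requires attentive handling.
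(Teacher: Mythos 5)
Your proposal is correct and follows essentially the same route as the paper: tensor product and involution are dispatched by neutrality and sign change, one sector per block suffices by Lemma~\ref{lemma:anti_symmetry_of_sector_color_sums}, and the composition case is handled by decomposing the sector of $pp'$ along the chain of alternating blocks through the middle row via Lemma~\ref{lemma:color_sum_decomposition}, using that the common row carries reversed induced order and inverted normalized colors in $p$ versus $p'$. Your telescoping over the chain is just a reorganization of the paper's two-stage decomposition, so the arguments coincide in substance.
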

	\noindent{\scshape Proof.} 
Involution merely changes the sign of the color sum of a given sector. Moreover, since each partition in $\mathcal P^{\circ\bullet}_{2,\mathrm{nb}}$, seen as the set of all of its points, is neutral, the color sums of sectors remain unchanged even if an entire such partition is injected via tensor product into a sector which is spread across both lines. In order to prove that $\mathcal S_w$ is a category, we thus only need to show that $\mathcal{S}_w$ is invariant under composition.\par 
		\begin{wrapfigure}[13]{r}{7.5cm}
			\vspace{-1em}
			\centering
			\begin{tikzpicture}[scale=0.75]
			\draw [dotted] (-0.5,6) -- (5.5,6);
			\draw [dotted] (-0.5,3) -- (7.5,3);
			\draw [dotted] (-0.5,0) -- (5.5,0);		
			
			\coordinate (a1) at (1.5,4);
			\coordinate (a2) at (6,4);		
			\coordinate (a3) at (6,2);
			\coordinate (a4) at (4.5,2);		
			\coordinate (a5) at (4.5,5);
			\coordinate (a6) at (0,5);				
			\coordinate (a7) at (0,2);				
			\coordinate (a8) at (3,2);				
			\node[fill=white, draw,circle,scale=0.4, thick] (x3) at (1.5,0) {}; 								
			\node[fill=white,draw,circle,scale=0.4, thick] (x4) at (1.5,3) {}; 						
			\node[fill=black,draw,circle,scale=0.4, thick] (x5) at (6,3) { }; 				
			\node[fill=white,draw,circle,scale=0.4, thick] (x6) at (4.5,3) { }; 				
			\node[fill=black,draw,circle,scale=0.4, thick] (x7) at (0,3) { }; 		
			\node[fill=white,draw,circle,scale=0.4, thick] (x8) at (3,3) { }; 
			\node[fill=white,draw,circle,scale=0.4, thick] (x9) at (3,6) { }; 			
			\draw[thick,->] (x8) to (x9);
			\draw[thick,->] (x7) to (a7) to (a8) to (x8);		
			\draw[thick,->] (x6) to (a5) to (a6) to (x7);
			\draw[thick,->] (x5) to (a3) to (a4) to (x6);		
			\draw[thick,->] (x4) to (a1) to (a2) to (x5);				
			\draw[thick,->] (x3) to (x4);							
			\node at (7.25,1.25) {$p$};
			\node at (7.25,4.75) {$p'$};
			\node at (3.75,2.5) {$x_{2n+1}$};
			\node at (0.5,2.5) {$x_4$};	
			\node at (2,2.5) {$x_1$};	
			\node at (5,2.5) {$x_3$};	
			\node at (6.5,2.5) {$x_2$};	
			\node[draw, fill=gray, scale=0.333] (last-bottom) at (5.5,0) {};
			\node [right of=last-bottom,xshift=-0.5cm, yshift=-0.5cm] {$m$};
			\node[draw, fill=gray, scale=0.333] (first-bottom) at (-0.5,0) {};			
			\node [left of=first-bottom, xshift=0.5cm, yshift=-0.5cm] {$1$};			
			\node[draw, fill=gray, scale=0.333] (last-middle) at (7.5,3) {};
			\node [right of=last-middle, xshift=-0.5cm] {$l$};			
			\node[draw, fill=gray, scale=0.333] (first-middle) at (-0.5,3) {};						
			\node [left of=first-middle, xshift=0.5cm] {$1$};			
			\node[draw, fill=gray, scale=0.333] (last-top) at (5.5,6) {};
			\node [right of=last-top,xshift=-0.5cm, yshift=0.5cm] {$k$};			
			\node[draw, fill=gray, scale=0.333] (first-top) at (-0.5,6) {};									
			\node [left of=first-top, xshift=0.5cm, yshift=0.5cm] {$1$};			
			\node [below of=x3, yshift=0.5cm] {$a$};			
			\node [above of=x9, yshift=-0.5cm] {$b$};						
			\end{tikzpicture}
		\end{wrapfigure}
			Let $(p,p')$ be a composable pairing from $\mathcal{S}_w$.  When dealing with compositions, it is convenient to use the following notation:
We can reference points in partitions by their rank $x\in\mathbb{N}$ in the ordering of the row (not the cyclic order) if we write $\lop{x}$ for a lower point and $\upp{x}$ for an upper one.			\par

We only treat the case of a through block $B$ in $pp'$, i.e.\ a block comprising both a lower point $\lop{a}$ and an upper point $\upp{b}$. We now prove $\sigma_{pp'}\left(\left]\lop{a},\upp{b}\right[_{pp'}\right)\in w\mathbb Z$.\par
Because $p$ and $p'$ are pair partitions, the points of $B$ must be linked through a sequence of blocks, alternately of $p$ and of $p'$. The blocks successively connect $\lop{a}$, oddly many points of indices $x_1,\ldots,x_{2n+1}$ on the common row, and, finally, $\upp{b}$. Suppose that $p'$ has $k$ upper and $l$ lower points and that $p$ has $m$ lower points.\par
We decompose, using Lemma~\ref{lemma:color_sum_decomposition},
			\begin{align}
			\sigma_{pp'}\left( \left]\lop{a},\upp{b}\right[_{pp'} \right)&=\sigma_{pp'}\left( \left]\lop{a},\lop{m}\right]_{pp'} \right)+\sigma_{pp'}\left( \left[\upp{k},\upp{b}\right[_{pp'} \right)\notag\\
			&=\sigma_{p}\left( \left]\lop{a},\lop{m}\right]_{p} \right)+\sigma_{p'}\left( \left[\upp{k},\upp{b}\right[_{p'} \right)\notag\\
			&=\sigma_{p}\left( \left]\lop{a},\lop{m}\right]_{p} \right)+\sigma_{p'}\left( \left]\lop{x_{2n+1}},\upp{b}\right[_{p'} \right)-\sigma_{p'}\left( \left]\lop{x_{2n+1}},\lop{l}\right]_{p'} \right).
						\label{equation:proposition_category_property_of_the_sets_S_w-proof-1}
			\end{align}
			The order induced by the cyclic order of $p'$ on its lower row is the exact opposite of the one induced by the cyclic order of $p$ on its upper row.
			Whereas the colors of those two rows match, for a rank $z\in\mathbb{N}$, $z\leq l$, the \emph{normalized} colors of the point $\upp{z}$ in $p$ and the point $\lop{z}$ in $p'$ are inverse to each other. Applying this to the set $\left]\lop{x_{2n+1}},\lop{l}\right]_{p'} $, we find
			\begin{align*}
			\sigma_{p'}\left( \left]\lop{x_{2n+1}},\lop{l}\right]_{p'} \right)=-\sigma_{p}\left( \left[\upp{l},\upp{x_{2n+1}}\right[_{p} \right).
			\end{align*}
			Inserting this equality into Equation \eqref{equation:proposition_category_property_of_the_sets_S_w-proof-1} yields
			\begin{align}
			\sigma_{pp'}\left( \left]\lop{a},\upp{b}\right[_{pp'} \right)
			&=\sigma_{p}\left( \left]\lop{a},\lop{m}\right]_{p} \right)+\sigma_{p'}\left( \left]\lop{x_{2n+1}},\upp{b}\right[_{p'} \right)+\sigma_{p}\left( \left[\upp{l},\upp{x_{2n+1}}\right[_{p} \right)\notag \\
			&=\sigma_{p}\left( \left]\lop{a},\upp{x_{2n+1}}\right[_{p} \right)+\sigma_{p'}\left( \left]\lop{x_{2n+1}},\upp{b}\right[_{p'} \right) \label{equation:proposition_category_property_of_the_sets_S_w-proof-2}.
			\end{align}
                        \par
                        If $n=0$, the sets $\left]\lop{a},\upp{x_{2n+1}}\right[_{p}$ and $\left]\lop{x_{2n+1}},\upp{b}\right[_{p'}$ are the interiors of sectors in $p$ and $p'$, respectively. Hence, in this case, Equation~\eqref{equation:proposition_category_property_of_the_sets_S_w-proof-2} already proves the claim since $\sigma_{p}\left( \left]\lop{a},\upp{x_{2n+1}}\right[_{p} \right), \sigma_{p'}\left( \left]\lop{x_{2n+1}},\upp{b}\right[_{p'} \right)\in w\mathbb Z$ by assumption.
                        \par
			If $n>0$, we can employ again Lemma~\ref{lemma:color_sum_decomposition} iteratively to decompose
			\begin{align*}
			\sigma_{p}\left( \left]\lop{a},\upp{x_{2n+1}}\right[_{p} \right)&= \sigma_{p}\left( \left]\lop{a},\upp{x_{1}}\right[_{p} \right)+\sum_{j=1}^n\sigma_{p}\left( \left[\upp{x_{2j-1}},\upp{x_{2j}}\right]_{p} \right)+\sum_{j=1}^n\sigma_{p}\left( \left]\upp{x_{2j}},\upp{x_{2j+1}}\right[_{p} \right).
			\end{align*}
			Using again the relationship between the orientations and the normalized colors of $p$ and $p'$ and combining this equality with Equation~\eqref{equation:proposition_category_property_of_the_sets_S_w-proof-2} gives an expression for $\sigma_{pp'}\left( \left]\lop{a},\lop{b}\right[_{pp'} \right)$ as a sum of the color sums of sectors in $p$ and $p'$, proving that it must be a multiple of $w$.
			That is what we needed to show. \hfill $\square$

\section{Proof Technique: Brackets}
\label{section:brackets}
In this section, we develop general tools to classify subcategories of $\Cppnb$ and find their generators. They will be employed in Sections~\ref{section:generators} and~\ref{section:classification} to prove the remaining parts of the \hyperref[section:main]{Main Theorems} and in the follow-up article \cite{MaWe18b} to determine all subcategories of $\mc S_0$.
\subsection{Brackets} 
To identify the generators of subcategories of $\Cppnb$, we find a set $\resbr\subseteq \mathcal P^{\circ\bullet}_{2,\mathrm{nb}}$ of \enquote{universal generators}, namely with the property $\mathcal{C}=\left\langle \mathcal{C}\cap\resbr\right\rangle$ for \emph{all} categories $\mathcal{C}\subseteq \mathcal P^{\circ\bullet}_{2,\mathrm{nb}}$ (see Proposition~\ref{proposition:generation_of_categories_by_residual_brackets}). 

The set $\resbr$ will be defined in Section~\ref{subsection:residual_brackets} as a special subset of the set of all \emph{brackets}. These latter partitions we introduce and study in the following.\par
\vspace{0.25cm}
       \begin{minipage}[c]{3cm}
	\centering
	\begin{tikzpicture}[scale=0.666]
          \draw [dotted, shift={(-0.5,0)}] (0,0) -- (5,0);
	\draw [dotted, shift={(-0.5,3)}] (0,0) -- (5,0);		
	\node [scale=0.4, fill=black, draw=black,circle] (x1) at (0,0) {};
	\node [scale=0.4, fill=white, draw=black,circle] (x2) at (4,0) {};		
	\node [scale=0.4, fill=black, draw=black,circle] (y1) at (0,3) {};
	\node [scale=0.4, fill=white, draw=black,circle] (y2) at (4,3) {};				
	\draw [fill=lightgray] (1,-0.166) rectangle (3,3.166);
	\draw (x1) -- ++ (0,1) -| (x2);
	\draw (y1) -- ++ (0,-1) -| (y2);
	\node at (-0.5,1.5) {$p$};	
	\end{tikzpicture}
      \end{minipage}
      \hfill
       \begin{minipage}[c]{0.666\textwidth}
         \begin{definition}
           \begin{enumerate}[label=(\alph*), labelwidth=!]
           \item We call $p\in\mathcal P^{\circ\bullet}_{2,\mathrm{nb}}$ a \emph{bracket} if $p$ is projective, i.e.\ $p=p^*$ and $p^2=p$, and if the lower row of $p$ is a sector in $p$. 
             \item The set of all brackets is denoted by $\mc B$.
           \end{enumerate}

	\end{definition}
Categories in $\mathcal P^{\circ\bullet}_{2,\mathrm{nb}}$ are closed under a certain kind of projection operation which produces brackets as described in the sequel.
\end{minipage}

\begin{definition}
	Given $p,p'\in\mathcal P^{\circ\bullet}_{2,\mathrm{nb}}$ and sectors $S$ in $p$ and $S'$ in $p'$, we number the points in $\inte (S)$ and $\inte (S')$ with respect to the cyclic order. We say that $(p,S)$ and $(p',S')$ are \emph{equivalent} if the following four conditions are met:
	\begin{enumerate}
		\item The sectors $S$ and $S'$ are of equal size.
		\item The same normalized colors occur in the same order in $S$ and $S'$.
		\item For all $i$, the point at position $i$ in $S$ belongs to a block crossing $\partial S$ in $p$ if and only if the block of $i$-th point in $S'$ crosses $\partial S'$ in $p'$. 
		\item For all $i,j$, the points at positions $i,j$ in $S$ form a block in $p$ if and only if the $i$-th and $j$-th points of $S'$ constitute one of the blocks of $p'$.
                \end{enumerate}
                In other words: $p$ restricted to $S$ coincides with $p'$ restricted to $S'$ (up to rotation). 
\end{definition}

\begin{definition}
	Let $S$ be a sector in $p\in\mathcal P^{\circ\bullet}_{2,\mathrm{nb}}$. We refer to the (uniquely determined) bracket $q$ with lower row $M$ which satisfies that $(p,S)$ and $(q,M)$ are equivalent as  \emph{the bracket $B(p,S)$ associated with $(p,S)$}.
      \end{definition}
      We may construct such brackets as described in the proof of the following lemma.

\begin{lemma}
	\label{lemma:associated_brackets}
For all sectors $S$ in $p\in \mathcal  P^{\circ\bullet}_{2,\mathrm{nb}}$ holds $B(p,S)\in\langle p\rangle$.
\end{lemma}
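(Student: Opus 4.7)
The plan is to construct $B(p,S)$ explicitly inside $\langle p\rangle$ using only rotations, involution, and composition.

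First, I will exploit the rotation-invariance of categories: by a suitable sequence of cyclic rotations, $p$ can be brought into a form $p_{0}\in\langle p\rangle$ in which the two points of $\partial S$ are the leftmost and rightmost lower points, so that the entire lower row of $p_{0}$ is precisely $S$. This is possible because $S$ is a cyclic interval whose boundary is the block $\partial S$. The remaining points then form the upper row of $p_{0}$ with the same coloration as in $p$. Under this rotation, internal blocks of $p$ inside $S$ lie entirely in the lower row of $p_{0}$; blocks of $p$ crossing $\partial S$ become through blocks of $p_{0}$; and blocks of $p$ internal to the complement of $S$ live entirely in the upper row of $p_{0}$.

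Second, I set $q\coloneqq p_{0}p_{0}^{*}$, which lies in $\langle p\rangle$ because categories are closed under involution and composition. Its lower and upper rows are two copies of $S$ with matching coloration. I will analyze what each type of block of $p_{0}$ contributes to $q$: an internal $S$-block of $p_{0}$ yields the corresponding block on the lower row of $q$, and by self-adjointness its mirror appears on the upper row; a through block $\{s,t\}$ of $p_{0}$ is paired, via $p_{0}^{*}$, with the same block $\{t,s\}$ (with roles of upper and lower swapped), so that the middle point $t$ links $s$ in the lower row of $q$ to its counterpart $s$ in the upper row, producing a straight vertical block; finally, a block internal to the complement of $S$ occurs identically in the upper row of $p_{0}$ and the lower row of $p_{0}^{*}$, so under composition it becomes a block confined entirely to the middle row and is discarded.

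Third, I verify that $q$ is a bracket. Self-adjointness is immediate from $(p_{0}p_{0}^{*})^{*}=p_{0}p_{0}^{*}$. Idempotence $q^{2}=q$ follows by repeating the composition analysis on $q\cdot q$: through blocks chain straight through the middle copy of $S$ to reproduce the same through blocks of $q$, while the internal $S$-blocks on the upper row of the first factor and the lower row of the second factor coincide and are absorbed as middle-only blocks. Letting $M$ denote the lower row of $q$, the block structure just determined shows that $(q,M)$ and $(p,S)$ satisfy all four conditions of equivalence, so uniqueness of the associated bracket gives $q=B(p,S)$, whence $B(p,S)\in\langle p\rangle$.

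The main obstacle I anticipate is the combinatorial bookkeeping in the composition analysis: every kind of block (internal to $S$, internal to its complement, through) must be tracked carefully through $p_{0}p_{0}^{*}$ and then through $(p_{0}p_{0}^{*})^{2}$ to confirm idempotence, and the degenerate cases where $|S|=2$ (so $\partial S=S$ and $\mathrm{int}(S)=\emptyset$) or where the complement of $S$ is empty should be checked to ensure the construction does not collapse.
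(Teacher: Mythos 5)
Your construction is exactly the paper's: rotate $p$ so that $S$ becomes the lower row and then compose the rotated partition with its involution, the result being $B(p,S)$. The only difference is that where the paper invokes a straightforward generalization of \cite[Theorem~2.12]{FrWe14} to get projectivity, you verify projectivity and the identification with $B(p,S)$ by direct block-by-block bookkeeping (which is sound; note only that rotation preserves \emph{normalized} colors rather than literal ones, which is all the equivalence conditions require), so this counts as essentially the same proof.
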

\begin{proof}
	Rotations preserve equivalence. The category $\mathcal{C}$ contains as well the partition $p'$ which results from rotating $p$ in such a way that the image of $S$ comprises exactly the lower row of $p'$. 
\begin{center}
   \hspace{-1em}
  \begin{tikzpicture}[scale=0.666, baseline=0]
    \def\dx{1}
    \def\dy{1}
    \def\dd{0.5}
    \def\cx{0.3}
    \def\cy{0.3}    
    \draw [dotted] ({0*\dx-\dd},{0*\dy}) -- ({6*\dx+\dd},{0*\dy});
    \draw [dotted] ({0*\dx-\dd},{6*\dy}) -- ({6*\dx+\dd},{6*\dy});
    \node[scale=0.4, circle, draw=black, fill=white] (a0) at ({0*\dx},{0*\dy}) {};
    \node[scale=0.4, circle, draw=black, fill=black, thick] (a1) at ({1*\dx},{0*\dy}) {};
    \node[scale=0.4, circle, draw=black, fill=black] (a2) at ({2*\dx},{0*\dy}) {};
    \node[scale=0.4, circle, draw=black, fill=black] (a3) at ({3*\dx},{0*\dy}) {};
    \node[scale=0.4, circle, draw=black, fill=white] (a4) at ({4*\dx},{0*\dy}) {};
    \node[scale=0.4, circle, draw=black, fill=white] (a5) at ({5*\dx},{0*\dy}) {};
    \node[scale=0.4, circle, draw=black, fill=black] (a6) at ({6*\dx},{0*\dy}) {};
    \node[scale=0.4, circle, draw=black, fill=black] (b0) at ({0*\dx},{6*\dy}) {};
    \node[scale=0.4, circle, draw=black, fill=black, thick] (b1) at ({1*\dx},{6*\dy}) {};
    \node[scale=0.4, circle, draw=black, fill=white] (b2) at ({2*\dx},{6*\dy}) {};
    \node[scale=0.4, circle, draw=black, fill=black] (b3) at ({3*\dx},{6*\dy}) {};
    \node[scale=0.4, circle, draw=black, fill=white] (b4) at ({4*\dx},{6*\dy}) {};
    \draw (a0) -- ++ (0,{3*\dy}) -| (a6);
    \draw (a2) -- ++ (0,{2*\dy}) -| (a4);
    \draw (a3) -- ++ (0,{1*\dy}) -| (a5);
    \draw[thick] (a1) -- (b1);
    \draw (b0) -- ++ (0,{-2*\dy}) -| (b4);
    \draw (b2) -- ++ (0,{-1*\dy}) -| (b3);
    \draw[dashed]  ({-\dd},{\cy})-- ({\dx+\cx},{\cy}) --({\dx+\cx},{-\cy})--  ({-\dd},{-\cy});
    \draw[dashed]  ({-\dd},{6*\dy+\cy})-- ({\dx+\cx},{6*\dy+\cy}) --({\dx+\cx},{6*\dy-\cy})--  ({-\dd},{+6*\dy-\cy});
    \node at ({5.5*\dx},{4*\dy}) {$p$};
    \node at ({1.5*\dx},{3.5*\dy}) {$\partial S$};    
    \node (labS) at ({-1*\dx},{3*\dy}) {$S$};
    \draw[densely dotted] (labS.south) -- ({-\dd},{0});
    \draw[densely dotted] (labS.north) -- ({-\dd},{6*\dy});    
  \end{tikzpicture}
  \hspace{1em}
  \begin{tikzpicture}[scale=0.666, baseline=0]
    \def\dx{1}
    \def\dy{1}
    \def\dd{0.5}
    \draw [dotted] ({0*\dx-\dd},{0*\dy}) -- ({7*\dx+\dd},{0*\dy});
    \draw [dotted] ({0*\dx-\dd},{3*\dy}) -- ({7*\dx+\dd},{3*\dy});
    \draw [dotted] ({0*\dx-\dd},{6*\dy}) -- ({7*\dx+\dd},{6*\dy});    
    \node[scale=0.4, circle, draw=black, fill=white] (a1) at ({1*\dx},{0*\dy}) {};
    \node[scale=0.4, circle, draw=black, fill=white] (a2) at ({2*\dx},{0*\dy}) {};
    \node[scale=0.4, circle, draw=black, fill=white] (a3) at ({3*\dx},{0*\dy}) {};
    \node[scale=0.4, circle, draw=black, fill=black] (a4) at ({4*\dx},{0*\dy}) {};
    \node[scale=0.4, circle, draw=black, fill=white] (b0) at ({0*\dx},{3*\dy}) {};
    \node[scale=0.4, circle, draw=black, fill=black] (b1) at ({1*\dx},{3*\dy}) {};
    \node[scale=0.4, circle, draw=black, fill=white] (b2) at ({2*\dx},{3*\dy}) {};
    \node[scale=0.4, circle, draw=black, fill=white] (b3) at ({3*\dx},{3*\dy}) {};
    \node[scale=0.4, circle, draw=black, fill=black] (b4) at ({4*\dx},{3*\dy}) {};
    \node[scale=0.4, circle, draw=black, fill=black] (b5) at ({5*\dx},{3*\dy}) {};
    \node[scale=0.4, circle, draw=black, fill=white] (b6) at ({6*\dx},{3*\dy}) {};
    \node[scale=0.4, circle, draw=black, fill=white] (b7) at ({7*\dx},{3*\dy}) {};
    \node[scale=0.4, circle, draw=black, fill=white] (c1) at ({1*\dx},{6*\dy}) {};
    \node[scale=0.4, circle, draw=black, fill=white] (c2) at ({2*\dx},{6*\dy}) {};
    \node[scale=0.4, circle, draw=black, fill=white] (c3) at ({3*\dx},{6*\dy}) {};
    \node[scale=0.4, circle, draw=black, fill=black] (c4) at ({4*\dx},{6*\dy}) {};    
    \draw (a1) -- ++ (0,{1*\dy}) -| (a4);
    \draw (b0) -- ++ (0,{-1*\dy}) -| (b1);
    \draw (b4) -- ++ (0,{-1*\dy}) -| (b6);
    \draw (b5) -- ++ (0,{-2*\dy}) -| (b7);
    \draw (a2) -- (b2);
    \draw (a3) -- (b3);
    \draw (c1) -- ++ (0,{-1*\dy}) -| (c4);
    \draw (b0) -- ++ (0,{1*\dy}) -| (b1);
    \draw (b4) -- ++ (0,{1*\dy}) -| (b6);
    \draw (b5) -- ++ (0,{2*\dy}) -| (b7);
    \draw (c2) -- (b2);
    \draw (c3) -- (b3);
    \node at ({0.25*\dx},{1*\dy}) {$p'$};
    \node at ({0.25*\dx},{5*\dy}) {$(p')^*$};        
  \end{tikzpicture}
  \hspace{1em}
  \begin{tikzpicture}[scale=0.666, baseline=-1cm]
    \def\dx{1}
    \def\dy{1}
    \def\dd{0.5}
    \draw [dotted] ({0*\dx-\dd},{0*\dy}) -- ({3*\dx+\dd},{0*\dy});
    \draw [dotted] ({0*\dx-\dd},{3*\dy}) -- ({3*\dx+\dd},{3*\dy});
    \node[scale=0.4, circle, draw=black, fill=white] (a1) at ({0*\dx},{0*\dy}) {};
    \node[scale=0.4, circle, draw=black, fill=white] (a2) at ({1*\dx},{0*\dy}) {};
    \node[scale=0.4, circle, draw=black, fill=white] (a3) at ({2*\dx},{0*\dy}) {};
    \node[scale=0.4, circle, draw=black, fill=black] (a4) at ({3*\dx},{0*\dy}) {};
    \node[scale=0.4, circle, draw=black, fill=white] (c1) at ({0*\dx},{3*\dy}) {};
    \node[scale=0.4, circle, draw=black, fill=white] (c2) at ({1*\dx},{3*\dy}) {};
    \node[scale=0.4, circle, draw=black, fill=white] (c3) at ({2*\dx},{3*\dy}) {};
    \node[scale=0.4, circle, draw=black, fill=black] (c4) at ({3*\dx},{3*\dy}) {};    
    \draw (a1) -- ++ (0,{1*\dy}) -| (a4);
    \draw (c1) -- ++ (0,{-1*\dy}) -| (c4);
    \draw (c2) -- (a2);
    \draw (c3) -- (a3);
    \node at ({1.5*\dx},{-1*\dy}) {$B(p,S)$};
  \end{tikzpicture}
\end{center}
By a straightforward generalization of \cite[Theorem~2.12]{FrWe14} to two-colored partitions, the partition $p' \left(p'\right)^*\in\langle p\rangle$ is projective. It is already the bracket $B(p,S)$.
      \end{proof}
Brackets can act via composition on suitable partitions to undo the erasing (see Sections~\ref{subsection:operations} and~\ref{subsection_neutral_subsets_and_the_color_sum}) of certain sets of points. \par
\vspace{0.75em}
\noindent
\begin{minipage}[c]{0.62\textwidth}
\begin{definition}
	\begin{enumerate}
		\item A neutral set of two consecutive points (with respect to the cyclic order) is called a \emph{turn}. 
		\item A block $B$ in a partition $p\in\mathcal P^{\circ\bullet}_{2,\mathrm{nb}}$ is a \emph{turn block} of a turn $T$ in $p$ if $B\neq T$ and $B\cap T\neq \emptyset$.	
	\end{enumerate}
\end{definition}
\end{minipage}
\hfill
\begin{minipage}{5cm}
  \begin{tikzpicture}[scale=0.666]
    \draw [dotted] (-0.5,0) -- (5.5,0);
    \node[scale=0.4, circle,draw=black,fill=black] (x1) at (1.5,0) {};
    \node[scale=0.4, circle,draw=black,fill=white] (x2) at (3.5,0) {};
    \node[scale=0.4, circle,draw=black,fill=black] (x3) at (4,0) {};
    \draw (x2) -- ++(0,1) -| (x1);
    \draw[fill=white] (0,-0.166) rectangle (1.2,0.166);
    \draw[fill=white] (1.8,-0.166) rectangle (3.2,0.166);
    \draw[fill=white] (4.3,-0.166) rectangle (5,0.166);
    \draw[densely dotted] (3.35,-0.2) -- ++(0,-0.2) -| (4.15,-0.2);
    \draw[dashed] (x3) -- ++(0,2);
    \node at (3.75,-0.85) {$T$};
    \node at (2.5,1.5) {$B$};
    \node at (-0.25,1) {$p$};        
  \end{tikzpicture}
\end{minipage}
\par
\vspace{0.5em}
Recall from Section~\ref{subsection:operations} that, for all turns $T$ in $p\in \Cppnb$, the erasing of $T$ from $p$ is denoted by $E(p,T)$.
\begin{lemma}
  \label{lemma:reversing_erasing_of_turns}
  Let $S$ be a sector in $p\in \mathcal P^{\circ\bullet}_{2,\mathrm{nb}}$ such that $\partial S$ is a turn block of a turn $T$ in $p$. Then, \[\left\langle p\right\rangle=\left\langle E(p,T),B(p,S)\right\rangle.\]
\end{lemma}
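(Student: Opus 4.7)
The statement comprises two inclusions, which I would prove separately. For $\langle E(p,T),B(p,S)\rangle\subseteq\langle p\rangle$: a turn is by definition a neutral pair of consecutive points, so its erasure is a category operation, realized by composition with the caps $\PartIdenLoBW$, $\PartIdenLoWB$ and their involutions, as recalled in Section~\ref{subsection_neutral_subsets_and_the_color_sum}; hence $E(p,T)\in\langle p\rangle$. Likewise, $B(p,S)\in\langle p\rangle$ is immediate from Lemma~\ref{lemma:associated_brackets}.

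The nontrivial inclusion is $p\in\langle E(p,T),B(p,S)\rangle$. Using the rotation-invariance of $\langle p\rangle$, I would first reduce to the case where $S$ coincides with the lower row of $p$; then $\partial S=\{\alpha,\alpha'\}$ consists of the outermost lower points, and the turn $T=\{\alpha,\beta\}$ straddles a corner with the upper row, where $\beta$ lies in some other block $\{\beta,\beta'\}$ of $p$. In this normal form, $E(p,T)$ arises from $p$ by deleting the two points $\alpha,\beta$ and merging the former blocks $\partial S$ and $\{\beta,\beta'\}$ into a single new block $\{\alpha',\beta'\}$. By the defining equivalence of $(B(p,S),\text{lower row})$ with $(p,S)$, the bracket $B(p,S)$ has both rows matching $S$ in color pattern, its outer block represents $\partial S$, and its interior blocks replicate every internal block of $S$ in $p$.

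The reconstruction of $p$ then proceeds in two conceptual steps. First, I would tensor $E(p,T)$ with suitable combinations of $\PartIdenW$, $\PartIdenB$, $\PartIdenLoBW$, $\PartIdenLoWB$ (or involutions thereof) in order to reintroduce, at the correct corner, the two missing points of the turn $T$. Second, I would compose the result with an appropriate rotation of $B(p,S)$: its outer bracket splits the merged block $\{\alpha',\beta'\}$ of $E(p,T)$ back into $\partial S$ and $\{\beta,\beta'\}$, while its interior blocks restore every remaining block of $S$; since $B(p,S)$ is projective, this composition has the right idempotent-like behaviour and introduces no spurious connections. I expect the main technical hurdle to be the bookkeeping of column indices, colors, and rotations—in particular, tracking the normalized-color inversion between upper and lower rows together with the two possible corner placements of the turn, so that the composition yields exactly $p$ rather than a slightly shifted or recolored variant.
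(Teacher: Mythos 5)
Your plan coincides with the paper's own proof: the easy inclusion is obtained exactly as you say (erasing the neutral turn $T$ is a category operation, and $B(p,S)\in\langle p\rangle$ by Lemma~\ref{lemma:associated_brackets}), and for the converse the paper also re-inserts the turn via a cap and recovers $p$ by composing with the identity-padded bracket, concretely rotating $p$ to a single line with $T$ at its two ends and verifying $(B(p,S)\otimes u)\bigl(\PartIdenLoWB\otimes E(p,T)\bigr)^{\circlearrowright}=p$. The only differences are your choice of rotation normal form (putting $S$ on the lower row rather than $T$ at the ends of one line) and that the paper carries out explicitly the colour/position bookkeeping you defer, including the through-blocks of $B(p,S)$ that let the blocks crossing $\partial S$ pass through the composition.
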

\begin{proof}
  Since $\langle p\rangle$ is closed under erasing turns, Lemma \ref{lemma:associated_brackets} proves one direction of the claim. Conversely, start with the generators $E(p,T)$ and $B(p,S)$. As $T$ has a turn block, $p$ cannot consists of just two points. Moreover, we can assume that $p$ is rotated in such a way that it has no upper points and that $T$ consists of the first and the last point of the lower row of $p$. We only treat the case that the first point of the lower row of $p$ is black. The other case is handled in an analogous manner. The upper row of the partition $p'\eqpd(\PartIdenLoWB\otimes E(p,T))^{\circlearrowright}$, an element of $\left\langle E(p,T),B(p,S)\right\rangle$, has the same number of points and the same coloration as the lower row of $p$.\par
  \noindent
  \begin{minipage}{0.5\textwidth}
    \begin{center}
  \begin{tikzpicture}[scale=0.666, baseline=0]
    \def\dist{0.727}
    \def\hgt{0.727}
    \def\rwd{7*\hgt}
    \def\dofs{0.5}
    \def\cxoffs{0.3}
    \def\cyoffs{0.166}    
    \draw[dotted] ({-\dofs},0) -- ({8+\dofs},0);
    \draw[dotted] ({-\dofs},{-\rwd}) -- ({8+\dofs},{-\rwd});
          \node[circle, fill=black, draw=black, scale=0.4, thick] (t1) at ({0*\dist},0) {};
          \node[circle, fill=black, draw=black, scale=0.4, thick] (p2) at ({5*\dist},0) {};
          \node[circle, fill=white, draw=black, scale=0.4, thick] (p1) at ({8*\dist},0) {};
          \node[circle, fill=white, draw=black, scale=0.4, thick] (t2) at ({11*\dist},0) {};
          \node[circle, fill=black, draw=black, scale=0.4, thick] (bt1) at ({0*\dist},{-\rwd}) {};
          \node[circle, fill=black, draw=black, scale=0.4, thick] (bp2) at ({5*\dist},{-\rwd}) {};
          \node[circle, fill=white, draw=black, scale=0.4, thick] (bp1) at ({8*\dist},{-\rwd}) {};
          \node[circle, fill=white, draw=black, scale=0.4, thick] (bt2) at ({11*\dist},{-\rwd}) {};
          \node[circle, scale=0.4, draw=gray, fill=gray] (a1-1) at ({\dist},0) {};
          \node[circle, scale=0.4, draw=gray, fill=white] (a1-2) at ({4*\dist},0) {};
          \draw[gray] (a1-1) -- ++ (0,{3*\hgt}) -| (a1-2);
          \node[circle, scale=0.4, draw=gray, fill=gray] (a2-1) at ({2*\dist},0) {};
          \node[circle, scale=0.4, draw=gray, fill=white] (a2-2) at ({9*\dist},0) {};
          \draw[gray] (a2-1) -- ++ (0,{2*\hgt}) -| (a2-2);
          \node[circle, scale=0.4, draw=gray, fill=gray] (a3-1) at ({3*\dist},0) {};
          \node[circle, scale=0.4, draw=gray, fill=white] (a3-2) at ({6*\dist},0) {};
          \draw[gray] (a3-1) -- ++ (0,{\hgt}) -| (a3-2);
          \node[circle, scale=0.4, draw=gray, fill=gray] (a4-1) at ({7*\dist},0) {};
          \node[circle, scale=0.4, draw=gray, fill=white] (a4-2) at ({10*\dist},0) {};
          \draw[gray] (a4-1) -- ++ (0,{\hgt}) -| (a4-2);

          \node[circle, scale=0.4, draw=gray, fill=gray] (b1-1) at ({\dist},0) {};
          \node[circle, scale=0.4, draw=gray, fill=white] (b1-2) at ({4*\dist},0) {};
          \draw[gray] (b1-1) -- ++ (0,{-2*\hgt}) -| (b1-2);
          \node[circle, scale=0.4, draw=gray, fill=gray] (b6-1) at ({\dist},{-\rwd}) {};
          \node[circle, scale=0.4, draw=gray, fill=white] (b6-2) at ({4*\dist},{-\rwd}) {};
          \draw[gray] (b6-1) -- ++ (0,{2*\hgt}) -| (b6-2);          
          \node[circle, scale=0.4, draw=gray, fill=gray] (b2-1) at ({2*\dist},0) {};
          \node[circle, scale=0.4, draw=gray, fill=gray] (b2-2) at ({2*\dist},{-\rwd}) {};
          \draw[gray] (b2-1) to (b2-2);
          \node[circle, scale=0.4, draw=gray, fill=gray] (b3-1) at ({3*\dist},0) {};
          \node[circle, scale=0.4, draw=gray, fill=white] (b3-2) at ({6*\dist},0) {};
          \draw[gray] (b3-1) -- ++ (0,{-\hgt}) -| (b3-2);
          \node[circle, scale=0.4, draw=gray, fill=gray] (b5-1) at ({3*\dist},{-\rwd}) {};
          \node[circle, scale=0.4, draw=gray, fill=white] (b5-2) at ({6*\dist},{-\rwd}) {};
          \draw[gray] (b5-1) -- ++ (0,{\hgt}) -| (b5-2);          
          \node[circle, scale=0.4, draw=gray, fill=gray] (b4-1) at ({7*\dist},0) {};
          \node[circle, scale=0.4, draw=gray, fill=gray] (b4-2) at ({7*\dist},{-\rwd}) {};
          \draw[gray] (b4-1) to (b4-2);                                        
          \node[circle, scale=0.4, draw=gray, fill=white] (b7) at ({9*\dist},{-\rwd}) {};
          \draw[gray] (a2-2) to (b7);
          \node[circle, scale=0.4, draw=gray, fill=white] (b8) at ({10*\dist},{-\rwd}) {};
          \draw[gray] (a4-2) to (b8);
          
          \draw[thick] (t1) -- ++(0,{4*\hgt}) -| (t2);
          \draw[thick] (p2) -- ++(0,{3*\hgt}) -| (p1);
          \draw[thick] (t1) -- ++(0,{-3*\hgt}) -| (p1);
          \draw[thick] (bt1) -- ++(0,{3*\hgt}) -| (bp1);
          \draw[thick] (bp2) to (p2);
          \draw[thick] (bt2) to (t2);

          \node at (4.,3.75) {$p'$};
          \draw[densely dotted] ({-0.5*\cxoffs},{-\rwd-0.5}) -- ++(0,{-2*\cyoffs}) -| ({8*\dist+0.5*\cxoffs},{-\rwd-0.5});
          \draw[densely dotted] ({9*\dist-1.5*\cxoffs},{-\rwd-0.5}) -- ++(0,{-2*\cyoffs}) -| ({11*\dist+0.5*\cxoffs},{-\rwd-0.5});          
          \node at ({4*\dist},{-\rwd-1.4}) {$B(p,S)$};
          \node at ({10*\dist},{-\rwd-1.4}) {$u$};
        \end{tikzpicture}
        \end{center}
        \end{minipage}
        \begin{minipage}{0.5\textwidth}
          \begin{center}
 \begin{tikzpicture}[scale=0.666, baseline =0]
 \def\dist{0.727}
    \def\hgt{0.727}
    \def\rwd{7*\hgt}
    \def\dofs{0.5}
    \def\cxoffs{0.3}
    \def\cyoffs{0.166}    
    \draw[dotted] ({-\dofs},0) -- ({8+\dofs},0);
          
          \node[circle, fill=black, draw=black, scale=0.4, thick] (t1) at ({0*\dist},0) {};
          \node[circle, fill=black, draw=black, scale=0.4, thick] (p2) at ({5*\dist},0) {};
          \node[circle, fill=white, draw=black, scale=0.4, thick] (p1) at ({8*\dist},0) {};
          \node[circle, fill=white, draw=black, scale=0.4, thick] (t2) at ({11*\dist},0) {};
          
          \node[circle, scale=0.4, draw=gray, fill=gray] (a1-1) at ({\dist},0) {};
          \node[circle, scale=0.4, draw=gray, fill=white] (a1-2) at ({4*\dist},0) {};
          \draw[gray] (a1-1) -- ++ (0,{3*\hgt}) -| (a1-2);
          \node[circle, scale=0.4, draw=gray, fill=gray] (a2-1) at ({2*\dist},0) {};
          \node[circle, scale=0.4, draw=gray, fill=white] (a2-2) at ({9*\dist},0) {};
          \draw[gray] (a2-1) -- ++ (0,{2*\hgt}) -| (a2-2);
          \node[circle, scale=0.4, draw=gray, fill=gray] (a3-1) at ({3*\dist},0) {};
          \node[circle, scale=0.4, draw=gray, fill=white] (a3-2) at ({6*\dist},0) {};
          \draw[gray] (a3-1) -- ++ (0,{\hgt}) -| (a3-2);
          \node[circle, scale=0.4, draw=gray, fill=gray] (a4-1) at ({7*\dist},0) {};
          \node[circle, scale=0.4, draw=gray, fill=white] (a4-2) at ({10*\dist},0) {};
          \draw[gray] (a4-1) -- ++ (0,{\hgt}) -| (a4-2);

          \draw[thick] (p2) -- ++ (0,{3*\hgt}) -| (t2);
          \draw[thick] (t1) -- ++ (0,{4*\hgt}) -| (p1);
          
          \node at ({6*\dist},{5*\hgt}) {$p$};

          \node at ({4*\dist},{-1*\hgt}) {$S$};
          \draw[gray, dashed] ({0*\dist-\cxoffs},{-1.5*\cyoffs}) rectangle ({8*\dist+\cxoffs},{1.5*\cyoffs});

          \node (x) at ({5.5*\dist},-1.5) {$T$};
          \draw[densely dotted] ({0*\dist-\cxoffs},-1) -- ++(0,{-\cyoffs}) -| ({\cxoffs},-1);
 \draw[densely dotted] ({11*\dist-\cxoffs},-1) -- ++(0,{-\cyoffs}) -| ({11*\dist+\cxoffs},-1);
 \draw[densely dotted] ({0*\dist},{-1-\cyoffs}) |- (x.west);
 \draw[densely dotted] ({11*\dist},{-1-\cyoffs}) |- (x.east);         
	\end{tikzpicture}
\end{center}
        \vspace{0em}
        Extend $B(p,S)$ to the right by a tensor product $u$ of suitable partitions from $\{\PartIdenW,\PartIdenB\}$ such that $(B(p,S)\otimes u,p')$ is composable and find that the composition $(B(p,S)\otimes u)p'$, which is an element of $\left\langle E(p,T),B(p,S)\right\rangle$, is equal to $p$.\qedhere
        \end{minipage}
      \end{proof}

Besides by rotations, it is by this operation of \enquote{de-erasing} turns alone that the set $\mc B$ generates all partitions of $\Cppnb$, as shown in the following. 
\begin{lemma}
  \label{lemma:generation_of_categories_by_brackets} 
  For every category $\mathcal C\subseteq \mathcal P^{\circ\bullet}_{2,\mathrm{nb}}$ holds $\mc C=\langle \mc C\cap \mc B\rangle$.  Moreover, for all $n\in \mathbb N$, the brackets in $\mc C\cap \mc B$ with at most $2n$ points are sufficient to generate all partitions of $\mathcal C$ with at most $n+1$ points.
\end{lemma}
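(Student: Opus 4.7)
The plan is to induct on $|p|$ and prove the stronger statement simultaneously: any $p\in\mc C$ with $|p|\leq n+1$ lies in $\langle\mc C\cap\mc B\rangle$ and is generated using brackets of at most $2n$ points. The base case $|p|\leq 2$ is immediate, since the elements of $\Cppnb$ on at most two points are the four base partitions $\PartIdenW,\PartIdenB,\PartIdenLoBW,\PartIdenLoWB$, which lie in every category and require no brackets to produce.

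For the induction step, I first observe that any $p\in\Cppnb$ with $|p|\geq 2$ admits at least one turn: since the global color sum of $p$ vanishes, the normalized colors cannot all agree, so two cyclically consecutive points with inverse normalized colors must exist. Then I distinguish whether or not some such turn fails to be a block of $p$.

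Case A: some turn $T$ of $p$ is not a block. Then $T$ straddles two distinct blocks, each of which is a turn block of $T$. Picking one of them, $B$, together with a sector $S$ satisfying $\partial S=B$, Lemma~\ref{lemma:reversing_erasing_of_turns} gives $p\in\langle E(p,T),B(p,S)\rangle$. By closure of $\mc C$ under erasing neutral intervals, $E(p,T)\in\mc C$ has $|p|-2$ points, while $B(p,S)\in\mc C\cap\mc B$ by Lemma~\ref{lemma:associated_brackets}. Since $S$ and its complementary sector share only the two points of $\partial S$ yet together cover $p$, one has $|S|\leq|p|-1$, so $|B(p,S)|=2|S|\leq 2n$; the inductive hypothesis supplies $E(p,T)$ from brackets of still smaller size.

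Case B: every turn of $p$ is a block. Picking any turn $T$ and replacing $p$ by a suitable rotation (which stays in $\mc C$), I may assume that the two points of $T$ occupy the leftmost two positions of the lower row. These two points being cyclically adjacent and of inverse normalized (hence, on the lower row, inverse actual) colors, $T$ is either $\PartIdenLoBW$ or $\PartIdenLoWB$; and because $T$ is an entire block sitting on two cyclically adjacent positions, no other block of $p$ can cross the vertical line separating $T$ from the rest. Hence $p$ factors genuinely as $r\otimes q$ with $r\in\{\PartIdenLoBW,\PartIdenLoWB\}$ and $q\in\mc C$ of size $|p|-2$, and the inductive hypothesis handles $q$ with brackets of size at most $2(|p|-3)\leq 2n$. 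I expect the main obstacle to lie in Case~B: one must verify that a rotation always achieves the desired configuration and that the resulting tensor decomposition is legitimate, both of which rest on the cyclic adjacency of $T$'s points together with $T$ being an entire block.
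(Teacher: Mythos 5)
Your proposal is correct, and its engine is the same as the paper's: induct on the number of points, extract from a suitable turn $T$ the smaller partition $E(p,T)$ and the bracket $B(p,S)$ of Lemma~\ref{lemma:associated_brackets}, reassemble $p$ via Lemma~\ref{lemma:reversing_erasing_of_turns}, and record the count $|B(p,S)|=2|S|\leq 2n$. Where you genuinely diverge is in handling the degenerate situations. The paper settles the base case and reduces to crossing partitions by citing $\Cppnb\cap\mc{NC}^{\circ\bullet}=\langle\emptyset\rangle$ from \cite{TaWe15a}, and then simply asserts that a crossing partition admits a turn with a turn block. You instead split on whether some turn fails to be a block: if so, its block is automatically a turn block and the reversible-erasing lemma applies irrespective of crossings; if every turn is a block, you rotate such a turn to the two leftmost lower positions and strip it off as a tensor factor $\PartIdenLoBW$ or $\PartIdenLoWB$, which is legitimate since a block on two cyclically adjacent points can cross nothing, so $E(p,T)$ carries all remaining blocks unchanged. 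This buys a self-contained proof that needs neither the classification of non-crossing neutral pair partitions nor the existence claim the paper leaves implicit, at the cost of one extra case. Two cosmetic points: the bound $|S|\leq|p|-1$ holds simply because sectors are by definition proper subsets (the complementary arc need not itself be a sector when the legs of $\partial S$ are adjacent), and the two-point partitions of $\Cppnb$ also include the upper-row reflections of $\PartIdenLoBW$ and $\PartIdenLoWB$, which are involutions of base partitions, so your base case is unaffected.
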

\begin{proof}
For all $n\in \mathbb N$, denote by $\mathcal B_n$ the set of all brackets with at most $2n$ points. It was shown in \cite[Proposition~3.3~(a)]{TaWe15a} that $\Cppnb\cap \mc{NC}^{\circ\bullet}=\langle \emptyset\rangle$, implying $\mc C\cap \mc{NC}^{\circ\bullet}\subseteq \langle \mc C\cap \mc B_n\rangle$ for all $n\in \pint$. Especially, all partitions of $\mc C$ with at most $2$ points are contained in  $\langle \mc C\cap\mc B_1\rangle$ as they are all non-crossing. Let $n\in \mathbb N$ with $n\geq 2$ be arbitrary, suppose that $\left\langle \mc C\cap \mc B_{n-1}\right\rangle$ contains all partitions of $\mathcal C$ with at most $n$ points and let $p\in \mathcal C$ have $n+1$ points. We can assume $p\notin \mc{NC}^{\circ\bullet}$. Then, we can find a sector $S$ of a turn block $\partial S$ of a turn $T$ in $p$. The partition $B(p,S)$, an element of $\mathcal C$ by Lemma~\ref{lemma:associated_brackets}, is a bracket with at most $2n$ points and thus an element of $\mc C\cap \mc B_n$ by definition of $\mc B_n$. The partition $E(p,T)$ has $n-1$, so no more than $n$, points and is hence contained in $\left\langle \mc C\cap \mathcal B_{n-1}\right\rangle$ by the induction hypothesis. As $\mc C\cap \mathcal B_{n-1}\subseteq \mc C\cap \mathcal B_{n}$, we infer $E(p,T)\in \langle \mc C\cap \mc B_n\rangle$. Having seen $E(p,T),B(p,S)\in \langle \mc C\cap\mc B_n\rangle$, Lemma~\ref{lemma:reversing_erasing_of_turns} now proves $p\in \langle \mc C\cap \mc B_n\rangle$ and thus the claim.
\end{proof}

\subsection{Residual Brackets}
\label{subsection:residual_brackets}
While Lemma~\ref{lemma:generation_of_categories_by_brackets} provides a first tool for understanding subcategories of $\Cppnb$, the set $\mc B$ of all brackets is still too large to be tractable. In this subsection, the subset $\resbr$ of $\mc B$ is defined and shown to also satisfy $\mc C=\langle \mc C\cap \resbr\rangle$ for all categories $\mc C\subseteq \Cppnb$. In several steps (leading up to Definition~\ref{definition:residual_bracket}), we introduce the defining properties individually and note their respective significance. A first demand we can make on our universal generators is that they be connected (see Section~\ref{subsection:connectedness}).
Connectedness is a modest assumption in  $\mathcal P^{\circ\bullet}_{2,\mathrm{nb}}$ as the following result shows. Recall that the factor partition of a connected component $S$ of $p\in\Cp$ is $E(p,S^c)$, where $S^c$ denotes the complement of $S$.
\begin{lemma}  \label{lemma:generation_of_categories_by_connected_partitions}  
  A category contains a partition $p\in \mathcal P^{\circ\bullet}_{2,nb}$ if and only if it contains all the factor partitions of the connected components of $p$.
\end{lemma}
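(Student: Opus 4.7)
The plan is to derive both directions of the equivalence from a single structural observation, namely that the connected components of any $p\in\Cppnb$, viewed as sets of points along the cyclic order, form a non-crossing meta-partition: for any two distinct components $S,S'$, there do not exist $\alpha,\beta\in S$ and $\gamma,\delta\in S'$ occurring cyclically as $(\alpha,\gamma,\beta,\delta)$. This is because any such interleaving could be traced, through the chains of pairwise-crossing blocks that define the components, to an actual block-versus-block crossing between $S$ and $S'$, contradicting that they are distinct components. From this it follows that the components are arranged in a nested/disjoint pattern, and whenever $p$ has more than one component there is an \emph{innermost} (or leaf) component $S$ whose points form a contiguous cyclic interval. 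Since every block of $p$ is neutral by assumption and $S$ is a union of blocks, this interval is additionally neutral.

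For the forward direction, fix the target component $S_j$ and successively erase the other components. At each stage, choose a leaf component $S\neq S_j$ among those still present. Its points form a neutral interval in the current partition, so erasure of this interval is a category operation (by composing with the pair partitions $\PartIdenLoBW,\PartIdenLoWB$ and their involutions, as recalled in Section~\ref{subsection_neutral_subsets_and_the_color_sum}). A short check shows that the components of $E(p,S)$ are exactly the components of $p$ other than $S$, and that their factor partitions are preserved under this erasure (using that iterated erasure is erasure of the union). After $k-1$ such steps one arrives at $E(p,S_j^c)$, which therefore lies in $\mathcal{C}$.

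For the reverse direction, induct on the number $k$ of connected components. The case $k=1$ is immediate since $p$ is its own factor partition. For $k>1$, pick a leaf component $S$; both $S$ and its cyclic complement are then contiguous neutral intervals. Rotate $p$ so that the points of $S$ occupy the leftmost portion of the lower row and the points of $S^c$ occupy the rest. Because $S$ is a union of entire blocks of $p$, no block straddles the boundary between $S$ and $S^c$, so the rotated $p$ decomposes as a tensor product $f_S^{\mathrm{rot}}\otimes (p')^{\mathrm{rot}}$, where $f_S=E(p,S^c)$ (with all its points rotated onto the lower row) and $p'=E(p,S)$. By hypothesis $f_S\in\mathcal{C}$; by the same invariance of factor partitions noted above, $p'$ has $k-1$ components and its factor partitions coincide with those of $p$ other than $f_S$, so the inductive hypothesis yields $p'\in\mathcal{C}$. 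Closure of $\mathcal{C}$ under tensor product and rotation then gives $p\in\mathcal{C}$.

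The main obstacle is the structural lemma that components form a non-crossing meta-partition and thus always admit a leaf component with contiguous point set; the rest of the argument is bookkeeping built on top of this fact together with the standard erasing/rotation/tensor closures of categories.
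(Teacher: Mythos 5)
Your proposal is correct and follows essentially the same route as the paper: both arguments hinge on the existence of a connected component forming a cyclic interval (hence a neutral interval that can be erased), and both recover $p$ in the converse direction by tensor products and rotations of the factor partitions. Your version merely adds bookkeeping (the non-crossing meta-partition observation, erasing the non-target components one by one, and an explicit induction for the reverse direction) to what the paper states more briefly.
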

\begin{proof}
  	 If $p$ is not connected, there must exist a connected component $S$ of $p$ which is an interval with respect to the cyclic order. Since $p$ has neutral blocks, $S$ is a neutral set. Thus, we can erase it and conclude $E(p,S)\in\langle p\rangle$. Likewise, we can erase the complement $S^c$ of $S$ and find $E(p,S^c)\in\langle p\rangle$. The partition $E(p,S)$ is the factor partition of $S$, and $E(p,S^c)$ has one fewer connected component than $p$. We repeat the procedure until we are left with connected partitions only. These elements of $\langle p\rangle$ are rotations of the factor partitions of the connected components of the original partition $p$. That proves one direction of the claim. Because  $p$ can be reassembled by appropriate tensor products and rotations from the factor partitions of its connected components, the reverse holds as well. 
       \end{proof}
       By definition, brackets are already required to be fixed points of involution and composition with themselves. We can add further such symmetry conditions.
\begin{definition}
 A partition $p\in \Cp$ is called \emph{verticolor-reflexive} if $\tilde p=p$.
\end{definition}
Verticolor-reflexive partitions necessarily have an even number of points in every one of their rows.
\begin{definition}
   We refer to a bracket $p$ with lower row $S$ as \emph{dualizable} if $p$ is ver\-ti\-co\-lor-re\-flexive, if $\inte(S)$ is non-empty, and if the two middle points of $\inte(S)$ form a turn in $p$ with turn blocks both of which cross $\partial S$.
\end{definition}
Performing a quarter rotation on a dualizable bracket gives the same partition for both directions, and this partition is a bracket as well. 
\begin{gather*}
  \begin{tikzpicture}[scale=0.666,baseline=1.56cm]
    \draw[dotted] (-0.5,0) -- (9.5,0);
    \draw[dotted] (-0.5,5) -- (9.5,5);
    \node[circle,scale=0.4,draw=black,fill=black] (x1) at (0,0) {};
    \node[circle,scale=0.4,draw=black,fill=white] (x2) at (1,0) {};
    \node[circle,scale=0.4,draw=black,fill=white] (x3) at (2,0) {};
    \node[circle,scale=0.4,draw=black,fill=white] (x4) at (3,0) {};    
    \node[circle,scale=0.4,draw=black,fill=black] (w1) at (4,0) {};
    \node[circle,scale=0.4,draw=black,fill=white] (w2) at (5,0) {};
    \node[circle,scale=0.4,draw=black,fill=black] (x5) at (6,0) {};
    \node[circle,scale=0.4,draw=black,fill=black] (x6) at (7,0) {};
    \node[circle,scale=0.4,draw=black,fill=black] (x7) at (8,0) {};
    \node[circle,scale=0.4,draw=black,fill=white] (x8) at (9,0) {};    
    \node[circle,scale=0.4,draw=black,fill=black] (y1) at (0,5) {};
    \node[circle,scale=0.4,draw=black,fill=white] (y2) at (1,5) {};
    \node[circle,scale=0.4,draw=black,fill=white] (y3) at (2,5) {};
    \node[circle,scale=0.4,draw=black,fill=white] (y4) at (3,5) {};    
    \node[circle,scale=0.4,draw=black,fill=black] (z1) at (4,5) {};
    \node[circle,scale=0.4,draw=black,fill=white] (z2) at (5,5) {};
    \node[circle,scale=0.4,draw=black,fill=black] (y5) at (6,5) {};
    \node[circle,scale=0.4,draw=black,fill=black] (y6) at (7,5) {};
    \node[circle,scale=0.4,draw=black,fill=black] (y7) at (8,5) {};
    \node[circle,scale=0.4,draw=black,fill=white] (y8) at (9,5) {};    
    \draw (x1) -- ++(0,2) -| (x8);
    \draw (y1) -- ++(0,-2) -| (y8);
    \draw (x4) -- ++(0,1) -| (x5);
    \draw (y4) -- ++(0,-1) -| (y5);    
    \draw (x2) to (y2);
    \draw (x3) to (y3);
    \draw (x6) to (y6);
    \draw (x7) to (y7);        
    \draw (w1) to (z1);
    \draw (w2) to (z2);
    \node at (4.5,-1) {$p$};    
    \draw [dashed] (4.5,2.5) -- (4.5,5.5);
    \draw [dashed] (4.5,2.5) -- (4.5,-0.5);    
    \draw [dashed] (4.5,2.5) -- (-0.5,2.5);
    \draw [dashed] (4.5,2.5) -- (9.5,2.5);        
  \end{tikzpicture}
\underset{\circlearrowright\frac{n}{2}}{\overset{\circlearrowleft\frac{n}{2}}{\longrightarrow}}
  \begin{tikzpicture}[scale=0.666,baseline=1.56cm]
    \draw[dotted] (-0.5,0) -- (9.5,0);
    \draw[dotted] (-0.5,5) -- (9.5,5);
    \node[circle,scale=0.4,draw=black,fill=white] (x1) at (0,0) {};
    \node[circle,scale=0.4,draw=black,fill=black] (x2) at (1,0) {};
    \node[circle,scale=0.4,draw=black,fill=black] (x3) at (2,0) {};
    \node[circle,scale=0.4,draw=black,fill=black] (x4) at (3,0) {};    
    \node[circle,scale=0.4,draw=black,fill=white] (w1) at (4,0) {};
    \node[circle,scale=0.4,draw=black,fill=black] (w2) at (5,0) {};
    \node[circle,scale=0.4,draw=black,fill=white] (x5) at (6,0) {};
    \node[circle,scale=0.4,draw=black,fill=white] (x6) at (7,0) {};
    \node[circle,scale=0.4,draw=black,fill=white] (x7) at (8,0) {};
    \node[circle,scale=0.4,draw=black,fill=black] (x8) at (9,0) {};    
    \node[circle,scale=0.4,draw=black,fill=white] (y1) at (0,5) {};
    \node[circle,scale=0.4,draw=black,fill=black] (y2) at (1,5) {};
    \node[circle,scale=0.4,draw=black,fill=black] (y3) at (2,5) {};
    \node[circle,scale=0.4,draw=black,fill=black] (y4) at (3,5) {};    
    \node[circle,scale=0.4,draw=black,fill=white] (z1) at (4,5) {};
    \node[circle,scale=0.4,draw=black,fill=black] (z2) at (5,5) {};
    \node[circle,scale=0.4,draw=black,fill=white] (y5) at (6,5) {};
    \node[circle,scale=0.4,draw=black,fill=white] (y6) at (7,5) {};
    \node[circle,scale=0.4,draw=black,fill=white] (y7) at (8,5) {};
    \node[circle,scale=0.4,draw=black,fill=black] (y8) at (9,5) {};    
    \draw (x1) -- ++(0,2.143) -| (x8); 
    \draw (y1) -- ++(0,-2.143) -| (y8);
    \draw (x2) to (y2);
    \draw (x7) to (y7);
    \draw (x3) -- ++(0,1.428) -| (x6);
    \draw (x4) -- ++(0,0.714) -| (x5);
    \draw (y3) -- ++(0,-1.428) -| (y6);
    \draw (y4) -- ++(0,-0.714) -| (y5);        
    \draw (w1) to (z1);
    \draw (w2) to (z2);
    \node at (4.5,-1) {$p^\dagger$};        
    \draw [dashed] (4.5,2.5) -- (4.5,5.5);
    \draw [dashed] (4.5,2.5) -- (4.5,-0.5);    
    \draw [dashed] (4.5,2.5) -- (-0.5,2.5);
    \draw [dashed] (4.5,2.5) -- (9.5,2.5);            
  \end{tikzpicture}
\end{gather*}

\begin{definition}
  For a dualizable bracket $p$ with $n$ points in its lower row, we call the bracket $p^\dagger\eqpd p^{\circlearrowleft\frac{n}{2}}=p^{\circlearrowright\frac{n}{2}}$ the \emph{dual bracket} of $p$.
\end{definition}
The dual $p^\dagger$ of a dualizable bracket $p\in \Cppnb$ is a dualizable bracket as well and it holds $(p^\dagger)^\dagger=p$ and $\langle p\rangle=\langle p^\dagger\rangle$. Combining connectedness and dualizability, we are now able to give the definition of residual brackets.
\begin{definition}
  \label{definition:residual_bracket}
  \begin{enumerate}[label=(\alph*)]
  \item \label{item:residual_bracket-item_1} 
    Let  $p$ be a bracket with lower row $S$.
    \begin{enumerate}
    \item \label{item:residual_bracket-item_1-item_1} We call $p$ \emph{residual of the first kind} if $p$ is connected and if $\inte(S)$ contains no turns of $p$.
    \item \label{item:residual_bracket-item_1-item_2} We call $p$ \emph{residual of the second kind} if $p$ is connected and dualizable and if $\inte(S)$ contains exactly one turn of $p$.
    \item \label{item:residual_bracket-item_1-item_3} We call $p$ \emph{residual} if $p$ is residual of the first or the second kind.
    \end{enumerate}
  \item The set of all residual brackets is denoted by $\mathcal{B}_{\mathrm{res}}$.
  \end{enumerate}
\end{definition}
To reduce the set of brackets $\mc B$ to its subset $\resbr$, we decrease the number of turns occurring in a given bracket using the reversible category operation from Lemma~\ref{lemma:reversing_erasing_of_turns}. Residual brackets of the first kind represents those brackets where this reduction is possible until no turns remain. Residual brackets of the second kind, in contrast, arise as the set of brackets reproducing itself under this operation. How this works in detail is seen in the proof of the following enhancement of Lemma~\ref{lemma:generation_of_categories_by_brackets} and central result of this subsection.
\begin{proposition}
  \label{proposition:generation_of_categories_by_residual_brackets}
  For every category $\mathcal C\subseteq \mathcal P^{\circ\bullet}_{2,\mathrm{nb}}$ holds $\mc C=\left\langle \mc C\cap \resbr\right\rangle$.
\end{proposition}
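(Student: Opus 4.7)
The plan is to invoke Lemma~\ref{lemma:generation_of_categories_by_brackets} to reduce the problem to showing $\mc C \cap \mc B \subseteq \langle \mc C \cap \resbr\rangle$, and then to prove this containment by induction on the pair $(|p|, t(p))$ ordered lexicographically, where $t(p)$ denotes the number of turns of $p$ contained in the interior $\inte(S)$ of the lower row $S$ of $p$. The base case is trivial: brackets with sufficiently few points are non-crossing and hence in $\langle \emptyset\rangle$.

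First I would handle the disconnected case via Lemma~\ref{lemma:generation_of_categories_by_connected_partitions}: the factor partitions of the connected components of a disconnected bracket $p \in \mc C \cap \mc B$ each have strictly fewer points than $p$, lie in $\mc C$, and, when re-expressed via Lemma~\ref{lemma:generation_of_categories_by_brackets}, yield only brackets of strictly fewer points than $p$, all covered by the primary inductive hypothesis.

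For connected $p$, Definition~\ref{definition:residual_bracket} splits the analysis: if $t(p)=0$, then $p$ is residual of the first kind, while if $t(p)=1$ and the additional dualizability criteria are met---verticolor-reflexivity, the unique interior turn lying between the two middle points of $\inte(S)$, and both of its turn blocks crossing $\partial S$---then $p$ is residual of the second kind. In either case $p \in \mc C \cap \resbr$ directly. In the remaining configurations, I would exhibit a turn $T$ in $\inte(S)$ and a sector $S'$ of a turn block of $T$ such that $(|B(p,S')|, t(B(p,S')))$ strictly precedes $(|p|, t(p))$ in the lexicographic order. Given such a pair, Lemma~\ref{lemma:reversing_erasing_of_turns} furnishes $\langle p\rangle = \langle E(p,T), B(p,S')\rangle$; by Lemma~\ref{lemma:associated_brackets} both generators lie in $\mc C$; the partition $E(p,T)$ has $|p|-2$ points and therefore resides in $\langle \mc C \cap \resbr\rangle$ by the primary inductive hypothesis together with Lemma~\ref{lemma:generation_of_categories_by_brackets}; and $B(p,S') \in \mc C \cap \mc B$ resides there by the bivariate inductive hypothesis.

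The main obstacle is the existence argument for the reducing pair $(T, S')$ in the non-residual case. When $t(p) \geq 2$, I expect to select a turn and one of its turn blocks so that the associated sector avoids at least one other interior turn, strictly lowering $t$ even at unchanged size. When $t(p)=1$ with $p$ not dualizable---so $\tilde p \neq p$, or the lone interior turn is off-centre, or at least one of its turn blocks fails to cross $\partial S$---the pertinent asymmetry must be converted into a turn block one of whose sectors is short enough that $|B(p,S')| < |p|$. Carrying out this case analysis, while verifying throughout that the lexicographic measure strictly decreases and that the chosen sector indeed has a turn block of $T$ as its boundary, will be the principal technical burden of the proof.
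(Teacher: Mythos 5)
Your reduction framework (Lemma~\ref{lemma:generation_of_categories_by_brackets}, the disconnected case via Lemma~\ref{lemma:generation_of_categories_by_connected_partitions}, and the use of Lemma~\ref{lemma:reversing_erasing_of_turns} to reassemble $p$ from $E(p,T)$ and $B(p,S')$) matches the paper, but the step you defer --- producing, for every connected non-residual bracket, a turn $T$ in $\inte(S)$ and a sector $S'$ of one of its turn blocks with $(|B(p,S')|,t(B(p,S')))$ lexicographically smaller --- is not a technical burden that can be discharged: such a pair need not exist, and this is exactly the crux of the proposition. The problematic configurations are those where every turn block of every turn of $\inte(S)$ is a through block (in a projective pair partition through blocks are vertical and they all cross $\partial S$). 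Concretely, take the bracket $p$ with eight points per row, lower normalized colors $\bullet\bullet\bullet\bullet\circ\circ\circ\circ$, boundary blocks $\{1,8\}$ in each row, a lower block $\{2,6\}$ and its mirror upper block $\{2,6\}$, and vertical through blocks at positions $3,4,5,7$. This $p$ is a connected bracket whose interior contains exactly one turn, at the two middle points $4,5$, with both turn blocks through blocks; yet $p$ is not verticolor-reflexive (reflection sends the block $\{2,6\}$ to $\{3,7\}$, which is not a block), so $p$ is residual of neither kind. Every sector of either turn block contains at least half of the points of $p$, so every admissible $B(p,S')$ has at least $|p|$ points, and its lower-row interior always acquires the central turn created where the two rows of $p$ meet, so $t(B(p,S'))\geq 1=t(p)$: your measure never strictly decreases (the same difficulty defeats the ``$t(p)\geq 2$'' heuristic, since the sector picks up new turns at the row junction and at the old boundary). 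The paper's Case~3 does not decrease any measure here; its key observation is that for $T$ the rightmost interior turn and $S'$ the sector to the right of its right turn block, the bracket $B(p,S')$ is --- up to the duality $q\mapsto q^{\dagger}$, which preserves $\langle q\rangle$ --- itself residual of the second kind, hence lies in $\mc C\cap\resbr$ outright; the induction hypothesis is invoked only for the strictly smaller bracket $E(E(p,T),T')$, after which $E(p,T)$ is recovered via Lemma~\ref{lemma:generation_of_categories_by_connected_partitions} and $p$ via Lemma~\ref{lemma:reversing_erasing_of_turns}. That dualizability observation is the missing idea in your plan.

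A second, smaller gap: you place $E(p,T)$ in $\langle\mc C\cap\resbr\rangle$ ``by the primary inductive hypothesis together with Lemma~\ref{lemma:generation_of_categories_by_brackets}'', but $E(p,T)$ is not a bracket, and the quantitative part of that lemma only lets brackets with at most $2m$ points generate partitions with at most $m+1$ points; a partition with $|p|-2$ points would thus require brackets of roughly twice the size of $p$, far beyond your inductive hypothesis. The paper circumvents this by also erasing the mirrored turn $T'$ on the upper row: $E(E(p,T),T')$ is again a bracket of strictly smaller size, covered by the induction, and $E(p,T)$ is then regained by a second application of Lemma~\ref{lemma:reversing_erasing_of_turns} (Case~2) or by Lemma~\ref{lemma:generation_of_categories_by_connected_partitions} (Case~3). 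The analogous size bookkeeping is also needed in your disconnected case, where the paper uses that all components without through blocks have at most half as many points as $p$.
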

\begin{proof}
  A bracket has necessarily at least $4$ points. We show inductively that, for all $n\in \mathbb N$ with $n\geq 2$, the set of brackets of $\mathcal C$ with at most $2n$ points is contained in $\left\langle\mc C\cap \resbr\right\rangle$, which by Lemma~\ref{lemma:generation_of_categories_by_brackets} is sufficient to prove the claim. The set $\mc C\cap \resbr$ generates all brackets of size $4$ as these are non-crossing and as $\langle \emptyset\rangle=\Cppnb\cap \mc{NC}^{\circ\bullet}$. Let $n\in \mathbb N$ with $n\geq 3$ be arbitrary, let all brackets of $\mathcal C$ with at most $2(n-1)$ points be elements of $\langle\mc C\cap \resbr\rangle$, and let $p$ be a bracket of $\mathcal C$ with $2n$ points. We distinguish three cases.
  \par
  \emph{Case 1:} First, assume that $p$ is not connected. If $p$ had exactly two connected components, then $p$ would have $4$ points. So, $p$ has at least three connected components.
  \begin{center}
    \begin{tikzpicture}[scale=0.666,baseline=0]
      \def\dx{0.75}
      \def\dy{0.8}
      \def\dd{0.5}
      \def\tx{14*\dx}
      \def\ty{9*\dy}      
      \draw[dotted]({-\dd},0) -- ({\tx+\dd},0);
      \draw[dotted]({-\dd},{\ty}) -- ({\tx+\dd},{\ty});
      \node[scale=0.4,circle,draw=black,fill=white] (a0) at ({0*\dx},{0}) {};
      \node[scale=0.4,circle,draw=gray,fill=white] (a1) at ({1*\dx},{0}) {};
      \node[scale=0.4,circle,draw=gray,fill=gray] (a2) at ({2*\dx},{0}) {};
      \node[scale=0.4,circle,draw=gray,fill=gray] (a3) at ({3*\dx},{0}) {};
      \node[scale=0.4,circle,draw=gray,fill=white] (a4) at ({4*\dx},{0}) {};
      \node[scale=0.4,circle,draw=black,fill=black] (a5) at ({5*\dx},{0}) {};
      \node[scale=0.4,circle,draw=black,fill=white] (a6) at ({6*\dx},{0}) {};
      \node[scale=0.4,circle,draw=black,fill=black] (a7) at ({7*\dx},{0}) {};
      \node[scale=0.4,circle,draw=black,fill=black] (a8) at ({8*\dx},{0}) {};
      \node[scale=0.4,circle,draw=gray,fill=white] (a9) at ({9*\dx},{0}) {};
      \node[scale=0.4,circle,draw=gray,fill=gray] (a10) at ({10*\dx},{0}) {};
      \node[scale=0.4,circle,draw=gray,fill=gray] (a11) at ({11*\dx},{0}) {};
      \node[scale=0.4,circle,draw=gray,fill=white] (a12) at ({12*\dx},{0}) {};
      \node[scale=0.4,circle,draw=black,fill=white] (a13) at ({13*\dx},{0}) {};
      \node[scale=0.4,circle,draw=black,fill=black] (a14) at ({14*\dx},{0}) {};
      \node[scale=0.4,circle,draw=black,fill=white] (b0) at ({0*\dx},{\ty}) {};
      \node[scale=0.4,circle,draw=gray,fill=white] (b1) at ({1*\dx},{\ty}) {};
      \node[scale=0.4,circle,draw=gray,fill=gray] (b2) at ({2*\dx},{\ty}) {};
      \node[scale=0.4,circle,draw=gray,fill=gray] (b3) at ({3*\dx},{\ty}) {};
      \node[scale=0.4,circle,draw=gray,fill=white] (b4) at ({4*\dx},{\ty}) {};
      \node[scale=0.4,circle,draw=black,fill=black] (b5) at ({5*\dx},{\ty}) {};
      \node[scale=0.4,circle,draw=black,fill=white] (b6) at ({6*\dx},{\ty}) {};
      \node[scale=0.4,circle,draw=black,fill=black] (b7) at ({7*\dx},{\ty}) {};
      \node[scale=0.4,circle,draw=black,fill=black] (b8) at ({8*\dx},{\ty}) {};
      \node[scale=0.4,circle,draw=gray,fill=white] (b9) at ({9*\dx},{\ty}) {};
      \node[scale=0.4,circle,draw=gray,fill=gray] (b10) at ({10*\dx},{\ty}) {};
      \node[scale=0.4,circle,draw=gray,fill=gray] (b11) at ({11*\dx},{\ty}) {};
      \node[scale=0.4,circle,draw=gray,fill=white] (b12) at ({12*\dx},{\ty}) {};
      \node[scale=0.4,circle,draw=black,fill=white] (b13) at ({13*\dx},{\ty}) {};
      \node[scale=0.4,circle,draw=black,fill=black] (b14) at ({14*\dx},{\ty}) {};
      \draw (a0) --++(0,{4*\dy})-| (a14);
      \draw (b0) --++(0,{-4*\dy})-| (b14);

      \draw[gray] (a1) --++(0,{1*\dy})-| (a3);
      \draw[gray] (a2) --++(0,{2*\dy})-| (a4);
      \draw (a5) --++(0,{3*\dy})-| (a13);
      \draw (a6) --++(0,{1*\dy})-| (a8);
      \draw[gray] (a9) --++(0,{1*\dy})-| (a11);
      \draw[gray] (a10) --++(0,{2*\dy})-| (a12);

      \draw[gray] (b1) --++(0,{-1*\dy})-| (b3);
      \draw[gray] (b2) --++(0,{-2*\dy})-| (b4);
      \draw (b5) --++(0,{-3*\dy})-| (b13);
      \draw (b6) --++(0,{-1*\dy})-| (b8);
      \draw[gray] (b9) --++(0,{-1*\dy})-| (b11);
      \draw[gray] (b10) --++(0,{-2*\dy})-| (b12);

      \draw (a7) -- (b7);
      \node at (11.25,3.6) {$\sim$};
    \begin{scope}[xshift=12cm,yshift=0.8cm]
      \def\dx{0.75}
      \def\dy{0.8}
      \def\dd{0.5}
      \def\txxx{6*\dx}
      \def\tyyy{7*\dy}      
      \draw[dotted]({-\dd},0) -- ({\txxx+\dd},0);
      \draw[dotted]({-\dd},{\tyyy}) -- ({\txxx+\dd},{\tyyy});
      \node[scale=0.4,circle,draw=black,fill=white] (a0) at ({0*\dx},{0}) {};
      \node[scale=0.4,circle,draw=black,fill=black] (a5) at ({1*\dx},{0}) {};
      \node[scale=0.4,circle,draw=black,fill=white] (a6) at ({2*\dx},{0}) {};
      \node[scale=0.4,circle,draw=black,fill=black] (a7) at ({3*\dx},{0}) {};
      \node[scale=0.4,circle,draw=black,fill=black] (a8) at ({4*\dx},{0}) {};
      \node[scale=0.4,circle,draw=black,fill=white] (a13) at ({5*\dx},{0}) {};
      \node[scale=0.4,circle,draw=black,fill=black] (a14) at ({6*\dx},{0}) {};
      \node[scale=0.4,circle,draw=black,fill=white] (b0) at ({0*\dx},{\tyyy}) {};
      \node[scale=0.4,circle,draw=black,fill=black] (b5) at ({1*\dx},{\tyyy}) {};
      \node[scale=0.4,circle,draw=black,fill=white] (b6) at ({2*\dx},{\tyyy}) {};
      \node[scale=0.4,circle,draw=black,fill=black] (b7) at ({3*\dx},{\tyyy}) {};
      \node[scale=0.4,circle,draw=black,fill=black] (b8) at ({4*\dx},{\tyyy}) {};
      \node[scale=0.4,circle,draw=black,fill=white] (b13) at ({5*\dx},{\tyyy}) {};
      \node[scale=0.4,circle,draw=black,fill=black] (b14) at ({6*\dx},{\tyyy}) {};
      \draw (a0) --++(0,{3*\dy})-| (a14);
      \draw (b0) --++(0,{-3*\dy})-| (b14);

      \draw (a5) --++(0,{2*\dy})-| (a13);
      \draw (a6) --++(0,{1*\dy})-| (a8);

      \draw (b5) --++(0,{-2*\dy})-| (b13);
      \draw (b6) --++(0,{-1*\dy})-| (b8);

      \draw (a7) -- (b7);
      \node at (5.25,1.2) {$\otimes$};
\node at (5.25,4.4) {$\otimes$};      
 \begin{scope}[xshift=6cm]
      \def\dxx{0.75}
      \def\dyy{0.8}
      \def\ddd{0.5}
      \def\txx{3*\dxx}
      \def\tyy{3*\dyy}      
      \draw[dotted]({-\ddd},0) -- ({\txx+\ddd},0);
      \draw[dotted]({-\ddd},{\tyy}) -- ({\txx+\ddd},{\tyy});      
      \node[scale=0.4,circle,draw=gray,fill=white] (a1) at ({0*\dxx},{0}) {};
      \node[scale=0.4,circle,draw=gray,fill=gray] (a2) at ({1*\dxx},{0}) {};
      \node[scale=0.4,circle,draw=gray,fill=gray] (a3) at ({2*\dxx},{0}) {};
      \node[scale=0.4,circle,draw=gray,fill=white] (a4) at ({3*\dxx},{0}) {};
      \draw[gray] (a1) --++(0,{1*\dyy})-| (a3);
      \draw[gray] (a2) --++(0,{2*\dyy})-| (a4);
      \draw[dotted]({-\ddd},{\tyy+\dyy}) -- ({\txx+\ddd},{\tyy+\dyy});
      \draw[dotted]({-\ddd},{2*\tyy+\dyy}) -- ({\txx+\ddd},{2*\tyy+\dyy});      
      \node[scale=0.4,circle,draw=lightgray,fill=white] (b1) at ({0*\dxx},{2*\tyy+\dyy}) {};
      \node[scale=0.4,circle,draw=lightgray,fill=lightgray] (b2) at ({1*\dxx},{2*\tyy+\dyy}) {};
      \node[scale=0.4,circle,draw=lightgray,fill=lightgray] (b3) at ({2*\dxx},{2*\tyy+\dyy}) {};
      \node[scale=0.4,circle,draw=lightgray,fill=white] (b4) at ({3*\dxx},{2*\tyy+\dyy}) {};
      \draw[lightgray] (b1) --++(0,{-1*\dyy})-| (b3);
      \draw[lightgray] (b2) --++(0,{-2*\dyy})-| (b4);
      \node at ({1.5*\dx},2.8) {$\otimes$};
    \end{scope}
    \end{scope}    
    \end{tikzpicture}
  \end{center}

  Then, the partition $p$ being a bracket, at most one connected component of $p$ can contain a through block and thus possibly have more than $n$ elements. (Here and in the following, such estimates for the number of points merely constitute convenient bounds and need not be optimal.)  This component encompassing through blocks has no more than $2(n-1)$ points and its factor partition is necessarily a bracket. The latter is hence contained in $\left\langle\mc C\cap \resbr\right\rangle$ by the induction hypothesis. All other connected components of $p$ have at most $n$ points as they each comprise non-through blocks exclusively. Because, by the induction hypothesis, $\left\langle \mc C\cap \resbr\right\rangle$ contains the brackets with $2(n-1)$ points or less, by Lemma~\ref{lemma:generation_of_categories_by_brackets}, all partitions with at most $n$ points are elements of $\left\langle \mc C\cap \resbr\right\rangle$. Consequently, all the factor partitions into which $p$ decomposes, and thus by Lemma~\ref{lemma:generation_of_categories_by_connected_partitions} the partition $p$ itself, lie in $\left\langle \mc C\cap\resbr\right\rangle$.
  \par
  \emph{Case 2:} Now, suppose that $p$ is connected, and let the lower row $S$ of $p$ contain a proper subsector $S_0$ such that $\partial S_0$ is a turn block of a turn $T$ in $\inte(S)$. Let $T'$ and $S_0'$ denote their respective counterparts on the upper row of $p$.
  \begin{align*}
    \begin{tikzpicture}[scale=0.666,baseline=0]
      \draw [dotted] (-0.5,0) -- (9.5,0);
      \draw [dotted] (-0.5,5.6) -- (9.5,5.6);
      \node [circle, scale=0.4, draw=black, fill=black] (x1a) at (0,0) {};
      \node [circle, scale=0.4, draw=black, fill=white] (x1b) at (9,0) {};
      \node [circle, scale=0.4, draw=black, fill=black] (x2a) at (0,5.6) {};
      \node [circle, scale=0.4, draw=black, fill=white] (x2b) at (9,5.6) {};
      \node [circle, scale=0.4, draw=black, fill=white] (y1a) at (2.25,0) {};
      \node [circle, scale=0.4, draw=black, fill=white] (y2a) at (2.25,5.6) {};
      \node [circle, scale=0.4, draw=black, fill=black] (y1b) at (6,0) {};
      \node [circle, scale=0.4, draw=black, fill=black] (y2b) at (6,5.6) {};
      \node [circle, scale=0.4, draw=black, fill=black] (y3a) at (1.5,0) {};
      \node [circle, scale=0.4, draw=black, fill=black] (y3b) at (1.5,5.6) {};            
      \draw (x1a) -- ++ (0,2.4) -| (x1b);
      \draw (x2a) -- ++ (0,-2.4) -| (x2b);
      \draw (y1a) -- ++ (0,1.6) -| (y1b);
      \draw (y2a) -- ++ (0,-1.6) -| (y2b);      
      \draw (y3a) to (y3b);
      \node [circle, scale=0.4, draw=gray, fill=gray] (z1a) at (5.25,0) {};
      \node [circle, scale=0.4, draw=gray, fill=white] (z1b) at (7.5,0) {};
      \draw[gray] (z1a) -- ++ (0,0.8) -| (z1b);
      \node [circle, scale=0.4, draw=gray, fill=gray] (z2a) at (5.25,5.6) {};
      \node [circle, scale=0.4, draw=gray, fill=white] (z2b) at (7.5,5.6) {};
      \draw[gray] (z2a) -- ++ (0,-0.8) -| (z2b);
      \node [circle, scale=0.4, draw=gray, fill=white] (z4a) at (4.5,0) {};
      \node [circle, scale=0.4, draw=gray, fill=white] (z4b) at (4.5,5.6) {};
      \draw[gray] (z4a)  to (z4b);
      \draw[gray, dashed] (1.875,-0.4) rectangle (6.375,0.4);
      \draw[gray, dashed] (1.875,5.2) rectangle (6.375,6.0);      
      \node at (-0.5,2.8) {$p$};
      \node at (0.5,1.8) {$S$};      
      \node at (5.25,-1) {$S_0$};
      \node at (5.25,6.6) {$S_0'$};
      \draw[densely dotted] (1.25,-0.6) -- ++ (0,-0.3) -- ++ (1.25,0) -- ++ (0,0.3);
      \node at (1.875,-1.5) {$T$};
      \draw[densely dotted] (1.25,6.2) -- ++ (0,0.3) -- ++ (1.25,0) -- ++ (0,-0.3);
      \node at (1.875,7.1) {$T'$};
    \end{tikzpicture}
    \quad
    \begin{tikzpicture}[scale=0.666,baseline=-1.111cm]
      \draw [dotted] (-0.5,0) -- (4.25,0);
      \draw [dotted] (-0.5,2.4) -- (4.25,2.4);
      \node [circle, scale=0.4, draw=black, fill=white] (y1a) at (0,0) {};
      \node [circle, scale=0.4, draw=black, fill=white] (y2a) at (0,2.4) {};
      \node [circle, scale=0.4, draw=black, fill=black] (y1b) at (3.75,0) {};
      \node [circle, scale=0.4, draw=black, fill=black] (y2b) at (3.75,2.4) {};
      \draw (y1a) -- ++ (0,0.8) -| (y1b);
      \draw (y2a) -- ++ (0,-0.8) -| (y2b);      
      \node [circle, scale=0.4, draw=gray, fill=gray] (z1a) at (3,0) {};
      \node [circle, scale=0.4, draw=gray, fill=gray] (z1b) at (3,2.4) {};
      \node [circle, scale=0.4, draw=gray, fill=white] (z2a) at (2.25,0) {};
      \node [circle, scale=0.4, draw=gray, fill=white] (z2b) at (2.25,2.4) {};
      \draw[gray] (z1a) to (z1b);
      \draw[gray] (z2a) to (z2b);
      \node at (1.875,-0.8) {$B(p,S_0)$};      
    \end{tikzpicture}
    \quad
    \begin{tikzpicture}[scale=0.666,baseline=-1.111cm]
      \draw [dotted] (-0.5,0) -- (4.25,0);
      \draw [dotted] (-0.5,2.4) -- (4.25,2.4);
      \node [circle, scale=0.4, draw=black, fill=white] (y1a) at (0,0) {};
      \node [circle, scale=0.4, draw=black, fill=white] (y2a) at (0,2.4) {};
      \node [circle, scale=0.4, draw=black, fill=black] (y1b) at (3.75,0) {};
      \node [circle, scale=0.4, draw=black, fill=black] (y2b) at (3.75,2.4) {};
      \draw (y1a) -- ++ (0,0.8) -| (y1b);
      \draw (y2a) -- ++ (0,-0.8) -| (y2b);      
      \node [circle, scale=0.4, draw=gray, fill=gray] (z1a) at (1.5,0) {};
      \node [circle, scale=0.4, draw=gray, fill=gray] (z1b) at (1.5,2.4) {};
      \node [circle, scale=0.4, draw=gray, fill=white] (z2a) at (0.75,0) {};
      \node [circle, scale=0.4, draw=gray, fill=white] (z2b) at (0.75,2.4) {};
      \draw[gray] (z1a) to (z1b);
      \draw[gray] (z2a) to (z2b);
      \node at (1.875,-0.8) {$B(p,S_0')$};      
    \end{tikzpicture}
  \end{align*}
  The brackets $B(p,S_0)$ and $B(p,S_0')$, both elements of $\mathcal C$ by Lemma~\ref{lemma:associated_brackets}, have at most $2(n-1)$ points and are therefore elements of $\left\langle\mc C\cap \resbr\right\rangle$ by the induction hypothesis. The partition $E(E(p,T),T')$ is a bracket of $\mathcal C$ with at most $2(n-1)$ points as well. It, too, is hence contained in $\left\langle \mc C\cap\resbr\right\rangle$ by the induction hypothesis. Because $\left\langle \mc C\cap\resbr\right\rangle$ now contains both $E(E(p,T),T')$ and $B(p,S_0')=B(E(p,T),S_0')$, applying Lemma~\ref{lemma:reversing_erasing_of_turns} yields $E(p,T)\in \left\langle \mc C\cap \resbr\right\rangle$. A second application of this lemma hence shows $p\in \left\langle \mc C\cap \resbr \right\rangle$ as $E(p,T)\in \left\langle \mc C\cap \resbr\right\rangle$ and $B(p,S_0)\in \left\langle \mc C\cap \resbr\right \rangle$.\par
  \emph{Case 3:} Lastly, assume that $p$ is connected but that only through-blocks emanate from any turns $\inte(S)$ might contain. If $\inte(S)$ contains no turns, then $p$ is residual of the first kind and hence an element of $\mc C\cap \resbr$. So, suppose that $\inte(S)$ contains at least one turn and let $T$ be the rightmost turn inside $\inte(S)$ and let $S'$ be the sector to the right of the right turn block $\partial S'$ of $T$ (see next page for an illustration).
  Now, crucially, the bracket $B(p,S')^{\dagger}$ is residual of the second kind and hence an element of $\mc C\cap\resbr$. Indeed, since $S'$ spreads symmetrically across both rows of $p$ and $p$ is projective, $B(p,S')$ is verticolor-reflexive. Dualizable is $B(p,S')$ because $S'$ has at its center the boundaries of the sectors of $p$ given by the lower and upper row of the projective $p$ and because $\partial S'$ is a through block of $p$. With $B(p,S')$ being a dualizable bracket, so is $B(p,S')^\dagger$. Lastly, there is exactly one turn in the interior of the lower row of $B(p,S')^\dagger$ since we chose $T$ specifically to be the rightmost turn of $\inte(S)$. Moreover, $B(p,S')^\dagger$ is connected because $p$ is. Especially, if $p$ is dualizable itself, then $p=B(p,S')^\dagger$.
  \begin{align*}
    \begin{tikzpicture}[scale=0.666,baseline=0]
      \draw [dotted] (-0.5,0) -- (9.5,0);
      \draw [dotted] (-0.5,4) -- (9.5,4);
      \draw[fill=black] (6.75,-0.2) rectangle (8.5, 0.2);
      \draw[fill=black] (6.75,3.8) rectangle (8.5, 4.2);            
      \node [circle, scale=0.4, draw=black, fill=black] (x1a) at (0,0) {};
      \node [circle, scale=0.4, draw=black, fill=white] (x1b) at (9,0) {};
      \node [circle, scale=0.4, draw=black, fill=black] (x2a) at (0,4) {};
      \node [circle, scale=0.4, draw=black, fill=white] (x2b) at (9,4) {};
      \node [circle, scale=0.4, draw=black, fill=white] (y1a) at (5.5,0) {};
      \node [circle, scale=0.4, draw=black, fill=white] (y1b) at (5.5,4) {};
      \node [circle, scale=0.4, draw=black, fill=black] (y2a) at (6.25,0) {};
      \node [circle, scale=0.4, draw=black, fill=black] (y2b) at (6.25,4) {};      
      \draw (x1a) -- ++ (0,1.6) -| (x1b);
      \draw (x2a) -- ++ (0,-1.6) -| (x2b);
      \draw (y1a) to (y1b);
      \draw (y2a) to (y2b);
      \node [circle, scale=0.4, draw=gray, fill=white] (z1a) at (2,0) {};
      \node [circle, scale=0.4, draw=gray, fill=gray] (z1b) at (8.25,0) {};
      \draw[gray] (z1a) -- ++ (0,0.8) -| (z1b);
      \node [circle, scale=0.4, draw=gray, fill=white] (z2a) at (2,4) {};
      \node [circle, scale=0.4, draw=gray, fill=gray] (z2b) at (8.25,4) {};
      \draw[gray] (z2a) -- ++ (0,-0.8) -| (z2b);
      \node [circle, scale=0.4, draw=gray, fill=gray] (z3a) at (7,0) {};
      \node [circle, scale=0.4, draw=gray, fill=gray] (z3b) at (7,4) {};
      \draw[gray] (z3a)  to (z3b);
      \node at (-0.5,2) {$p$};
      \node at (0.5,1) {$S$};
      \draw[gray, dashed] (5.875,-0.4) rectangle (9.375,4.4);
      \node at (10,2) {$S'$};
      \draw[densely dotted] (5.25,-0.6) -- ++ (0,-0.3) -- ++ (1.25,0) -- ++ (0,0.3);
      \node at (5.875,-1.5) {$T$};
      \draw[densely dotted] (5.25,4.6) -- ++ (0,0.3) -- ++ (1.25,0) -- ++ (0,-0.3);
      \node at (5.875,5.5) {$T'$};
    \end{tikzpicture}
    \quad
    \begin{tikzpicture}[scale=0.666,baseline=0]
      \draw [dotted] (2.25,0) -- (9.5,0);
      \draw [dotted] (2.25,4) -- (9.5,4);
      \draw[fill=black] (6.75,-0.2) rectangle (8.5, 0.2);
      \draw[fill=black] (6.75,3.8) rectangle (8.5, 4.2);
      \draw[draw=gray, fill=white] (3.25,-0.2) rectangle (5, 0.2);
      \draw[draw=gray, fill=white] (3.25,3.8) rectangle (5, 4.2);                  
      \node [circle, scale=0.4, draw=black, fill=black] (x1a) at (2.75,0) {};
      \node [circle, scale=0.4, draw=black, fill=white] (x1b) at (9,0) {};
      \node [circle, scale=0.4, draw=black, fill=black] (x2a) at (2.75,4) {};
      \node [circle, scale=0.4, draw=black, fill=white] (x2b) at (9,4) {};
      \node [circle, scale=0.4, draw=black, fill=white] (y1a) at (5.5,0) {};
      \node [circle, scale=0.4, draw=black, fill=white] (y1b) at (5.5,4) {};
      \node [circle, scale=0.4, draw=black, fill=black] (y2a) at (6.25,0) {};
      \node [circle, scale=0.4, draw=black, fill=black] (y2b) at (6.25,4) {};      
      \draw (x1a) -- ++ (0,1.6) -| (x1b);
      \draw (x2a) -- ++ (0,-1.6) -| (x2b);
      \draw (y1a) to (y1b);
      \draw (y2a) to (y2b);
      \node [circle, scale=0.4, draw=gray, fill=white] (z1a) at (3.5,0) {};
      \node [circle, scale=0.4, draw=gray, fill=gray] (z1b) at (8.25,0) {};
      \draw[gray] (z1a) -- ++ (0,0.8) -| (z1b);
      \node [circle, scale=0.4, draw=gray, fill=white] (z2a) at (3.5,4) {};
      \node [circle, scale=0.4, draw=gray, fill=gray] (z2b) at (8.25,4) {};
      \draw[gray] (z2a) -- ++ (0,-0.8) -| (z2b);
      \node [circle, scale=0.4, draw=gray, fill=gray] (z3a) at (7,0) {};
      \node [circle, scale=0.4, draw=gray, fill=gray] (z3b) at (7,4) {};
      \draw[gray] (z3a)  to (z3b);
      \node [circle, scale=0.4, draw=gray, fill=white] (z4a) at (4.75,0) {};
      \node [circle, scale=0.4, draw=gray, fill=white] (z4b) at (4.75,4) {};
      \draw[gray] (z4a)  to (z4b);      
      \node at (10.5,2) {$B(p,S')^\dagger$};
    \end{tikzpicture}
  \end{align*}
  Now, let $T'$ denote the counterpart of $T$ on the upper row again. The partition $E(E(p,T),T')$ is a bracket of $\mathcal C$ with $2(n-1)$ elements at most and hence contained in $\left\langle \mc C\cap \resbr\right\rangle$ by the induction hypothesis. Because $T'$ is a connected component of $E(p,T)$ whose factor partition is a rotation of $\PartIdenLoBW$ or $\PartIdenLoWB$, we infer $E(p,T)\in \left\langle E(E(p,T),T')\right\rangle\subseteq \left\langle \mc C\cap \resbr\right\rangle$ with the help of Lemma~\ref{lemma:generation_of_categories_by_connected_partitions}. Lemma~\ref{lemma:reversing_erasing_of_turns} now proves $p\in \left\langle \mc C\cap\resbr\right\rangle$ due to $E(p,T)\in \left\langle \mc C\cap\resbr\right\rangle$ and $B(p,S')\in \left\langle \mc C\cap\resbr\right\rangle$.\par
  In conclusion, $p\in \langle \mc C\cap\resbr\rangle$ holds always, which is what we needed to show.
\end{proof}

\subsection{Bracket Arithmetics}
Proposition~\ref{proposition:generation_of_categories_by_residual_brackets} showed that the subcategories of $\Cppnb$ are given precisely by the set  $\{\langle\mc G\rangle\mid \mc G\subseteq \resbr \}$. In this subsection we begin investigating the map $\mathfrak{P}(\resbr)\to\mathfrak{P}(\resbr),\; \mc G\mapsto \langle \mc G\rangle\cap \resbr$. The fixed points of this map are in bijection with the subcategories of $\Cppnb$. Especially, knowing which residual brackets generate which is key to proving the remaining parts of the \hyperref[section:main]{Main Theorems} in Sections~\ref{section:generators} and~\ref{section:classification}. And, in the follow-up article, we determine the full graph of the above map to find all the subcategories of $\mc S_0$.\par
We require more precise notation to be able to address specific brackets and certain maps $\mc B\to \mc B$, particularly those mapping elements of $\resbr$ again to $\resbr$.
\par
\vspace{0.75em}
\noindent
\begin{minipage}[c]{0.666\textwidth}
	\begin{definition}
		\begin{enumerate}[label=(\alph*)]
                  			\item If $p\in\mathcal{P}^{\circ\bullet}$ is a bracket, the projective partition $\Arg(p)$ which is obtained from $p$ by erasing in every row the left- and the rightmost point, is called the \emph{argument} of $p$. 
			\item Conversely, for each projective $a\in\mathcal P^{\circ\bullet}_{2,\mathrm{nb}}$ and every color $c\in\{\circ,\bullet\}$, denote by $\mathrm{Br}\left(c\mid a\mid \overline{c}\right)$ the bracket whose leftmost lower point is of color $c$ and which has the argument $a$. 
		\end{enumerate}
	\end{definition}
\end{minipage}
\hfill
\begin{minipage}{5cm}
	\centering
	\begin{tikzpicture}[scale=0.666]
	\draw [dotted, shift={(-0.5,0)}] (0,0) -- (5,0);
	\draw [dotted, shift={(-0.5,3)}] (0,0) -- (5,0);		
	\node [scale=0.4, fill=black, draw=black,circle] (x1) at (0,0) {};
	\node [scale=0.4, fill=white, draw=black,circle] (x2) at (4,0) {};		
	\node [scale=0.4, fill=black, draw=black,circle] (y1) at (0,3) {};
	\node [scale=0.4, fill=white, draw=black,circle] (y2) at (4,3) {};				
	\draw [fill=lightgray] (1,-0.166) rectangle (3,3.166);
	\draw (x1) -- ++ (0,1) -| (x2);
	\draw (y1) -- ++ (0,-1) -| (y2);
	\node at (2,1.5) {$a$};
	\node at (2,-1) {$\mathrm{Br}\left(\bullet\mid a\mid \circ\right)$};
	\end{tikzpicture}
\end{minipage}

\begin{definition}
	For any tuple $(c_1,\ldots,c_n)$ of colors $c_1,\ldots,c_n\in \{\circ,\bullet\}$, $n\in\mathbb{N}$, denote by $\mathrm{Id}(c_1\ldots c_n)$ the tensor product of $n$ partitions from $\left\{\PartIdenW,\PartIdenB\right\}$ such that the lower row (and thus also the upper row) has the coloration $(c_1,\ldots,c_n)$.
      \end{definition}
      We define on the set of all brackets a partial associative operation applicable to all pairs of brackets whose lower rows start with the same color.
\begin{definition}
  For every $c\in\{ \circ,\bullet\}$ and all projective $a,b\in\Cpp$, we call
    \begin{align*}
      \mathrm{Br}\left(c\mid a\mid \overline{c}\right)\boxtimes \mathrm{Br}\left(c\mid b\mid \overline c\right)\eqpd\mathrm{Br}\left(c\mid a\otimes b\mid \overline c\right)
    \end{align*}
    the \emph{bracket product} of $(\mathrm{Br}\left(c\mid a\mid \overline{c}\right), \mathrm{Br}\left(c\mid b\mid \overline c\right))$.
        \begin{gather*}
        \begin{tikzpicture}[scale=0.666]
	\draw [dotted] (-0.5,0) -- (4.5,0);
	\draw [dotted] (-0.5,3) -- (4.5,3);
	\node [scale=0.4, fill=black, draw=black,circle] (x1) at (0,0) {};
	\node [scale=0.4, fill=white, draw=black,circle] (x2) at (4,0) {};		
	\node [scale=0.4, fill=black, draw=black,circle] (y1) at (0,3) {};
	\node [scale=0.4, fill=white, draw=black,circle] (y2) at (4,3) {};
	\draw [fill=lightgray] (1,-0.166) rectangle (3,3.166);
	\draw (x1) -- ++ (0,1) -| (x2);
	\draw (y1) -- ++ (0,-1) -| (y2);
	\node at (2,1.5) {$a$};
        \node at (5,1.5) {$\boxtimes$};
        \begin{scope}[xshift=6cm]
          \draw [dotted] (-0.5,0) -- (4.5,0);
	\draw [dotted] (-0.5,3) -- (4.5,3);
	\node [scale=0.4, fill=black, draw=black,circle] (x1) at (0,0) {};
	\node [scale=0.4, fill=white, draw=black,circle] (x2) at (4,0) {};		
	\node [scale=0.4, fill=black, draw=black,circle] (y1) at (0,3) {};
	\node [scale=0.4, fill=white, draw=black,circle] (y2) at (4,3) {};
	\draw [fill=gray] (1,-0.166) rectangle (3,3.166);
	\draw (x1) -- ++ (0,1) -| (x2);
	\draw (y1) -- ++ (0,-1) -| (y2);
	\node at (2,1.5) {$b$};
          \end{scope}
          \node at (15.5,-1) {$\mathrm{Br}\left(\bullet\mid a\mid \circ\right)\boxtimes \mathrm{Br}\left(\bullet\mid b\mid \circ\right)$};
          \node at (15.5,1.5) {$\otimes$};          
        \node at (11,1.5) {$=$};
          \begin{scope}[xshift=12cm]
        \draw [dotted] (-0.5,0) -- (7.5,0);
	\draw [dotted] (-0.5,3) -- (7.5,3);
	\node [scale=0.4, fill=black, draw=black,circle] (x1) at (0,0) {};
	\node [scale=0.4, fill=white, draw=black,circle] (x2) at (7,0) {};		
	\node [scale=0.4, fill=black, draw=black,circle] (y1) at (0,3) {};
	\node [scale=0.4, fill=white, draw=black,circle] (y2) at (7,3) {};
	\draw [fill=lightgray] (1,-0.166) rectangle (3,3.166);
	\draw [fill=gray] (4,-0.166) rectangle (6,3.166);        
	\draw (x1) -- ++ (0,1) -| (x2);
	\draw (y1) -- ++ (0,-1) -| (y2);
	\node at (2,1.5) {$a$};
	\node at (5,1.5) {$b$};
          \end{scope}
      \end{tikzpicture}
    \end{gather*}
  \end{definition}    
    For every bracket, we define two ways of changing the starting color and, in some sense, retaining the argument.
    \begin{definition}
      For every $c\in \colors$ and projective $a\in \Cppnb$, we call
    \begin{align*}
      \trmw(\mathrm{Br}\left(c\mid a\mid \overline{c}\right))\eqpd \mathrm{Br}\left(\overline{c}\mid \mathrm{Br}\left(c\mid a\mid \overline{c}\right)\mid c\right)
    \end{align*}
    the \emph{weak inversion} and
        \begin{align*}
      \trms(\mathrm{Br}\left(c\mid a\mid \overline{c}\right))\eqpd  \mathrm{Br}\left({\overline{c}}\mid \mathrm{Id}(c)\otimes a\otimes \mathrm{Id}(\overline{c})\mid c\right)
        \end{align*}
        the \emph{strong inversion} of $\mathrm{Br}\left(c\mid a\mid \overline{c}\right)$.
      \begin{gather*}
        \begin{tikzpicture}[scale=0.666]
	\draw [dotted, shift={(-0.5,0)}] (-1,0) -- (6,0);
	\draw [dotted, shift={(-0.5,5)}] (-1,0) -- (6,0);
	\node [scale=0.4, fill=white, draw=black,circle] (z1) at (-1,0) {};
	\node [scale=0.4, fill=black, draw=black,circle] (z2) at (5,0) {};
      	\node [scale=0.4, fill=white, draw=black,circle] (w1) at (-1,5) {};
	\node [scale=0.4, fill=black, draw=black,circle] (w2) at (5,5) {};		        
	\node [scale=0.4, fill=black, draw=black,circle] (x1) at (0,0) {};
	\node [scale=0.4, fill=white, draw=black,circle] (x2) at (4,0) {};		
	\node [scale=0.4, fill=black, draw=black,circle] (y1) at (0,5) {};
	\node [scale=0.4, fill=white, draw=black,circle] (y2) at (4,5) {};				
	\draw [fill=lightgray] (1,-0.166) rectangle (3,5.166);
	\draw (x1) -- ++ (0,1) -| (x2);
	\draw (y1) -- ++ (0,-1) -| (y2);
	\draw (z1) -- ++ (0,2) -| (z2);
	\draw (w1) -- ++ (0,-2) -| (w2);        
	\node at (2,2.5) {$a$};
	\node at (2,-1) {$\trmw(\mathrm{Br}\left(\bullet\mid a\mid \circ\right))$};
      \end{tikzpicture}
      \hspace{2em}
              \begin{tikzpicture}[scale=0.666]
	\draw [dotted, shift={(-0.5,0)}] (-1,0) -- (6,0);
	\draw [dotted, shift={(-0.5,5)}] (-1,0) -- (6,0);
	\node [scale=0.4, fill=white, draw=black,circle] (z1) at (-1,0) {};
	\node [scale=0.4, fill=black, draw=black,circle] (z2) at (5,0) {};
      	\node [scale=0.4, fill=white, draw=black,circle] (w1) at (-1,5) {};
	\node [scale=0.4, fill=black, draw=black,circle] (w2) at (5,5) {};		        
	\node [scale=0.4, fill=black, draw=black,circle] (x1) at (0,0) {};
	\node [scale=0.4, fill=white, draw=black,circle] (x2) at (4,0) {};		
	\node [scale=0.4, fill=black, draw=black,circle] (y1) at (0,5) {};
	\node [scale=0.4, fill=white, draw=black,circle] (y2) at (4,5) {};				
	\draw [fill=lightgray] (1,-0.166) rectangle (3,5.166);
	\draw (x1) --  (y1);
	\draw (x2) --  (y2);
	\draw (z1) -- ++ (0,2) -| (z2);
	\draw (w1) -- ++ (0,-2) -| (w2);        
	\node at (2,2.5) {$a$};
	\node at (2,-1) {$\trms(\mathrm{Br}\left(\bullet \mid a \mid  \circ\right))$};
	\end{tikzpicture}
      \end{gather*}
\end{definition}      
\begin{lemma}
	\label{lemma:bracket_operations}
	Let $p$ and $p'$ be two brackets starting with the same color.
	\begin{enumerate}[label=(\alph*)]
		\item 
		\label{item:bracket_operations-item_1}
               Categories are closed under the bracket product: $p\boxtimes p'\in \langle p,p'\rangle$.
		\item 
                  \label{item:bracket_operations-item_2}
The brackets $p$ and $\tilde p$ start with the same color and $\Arg(\tilde p)$ is the verticolor-reflection of $\Arg(p)$.
                \item
       		\label{item:bracket_operations-item_5}
                 Categories are closed under taking arguments of brackets: $\Arg(p)\in \langle p\rangle$.
		\item
                  \label{item:bracket_operations-item_6}
                  It holds
                	$\langle \PartBracketBWBW\rangle=\langle\PartHalfLibWBW \rangle=\langle \PartHalfLibBWB\rangle=\langle \PartBracketWBWB\rangle$.
		\item 
		\label{item:bracket_operations-item_3}		
                Weak inversion is a reversible category operation: $\langle p\rangle=\langle \trmw(p)\rangle$.
              \item
		\label{item:bracket_operations-item_4}
                Strong inversion is reversible as well, but it is only available in certain categories:
$\langle p, \PartHalfLibWBW \rangle =\langle \trms(p)\rangle$.
	\end{enumerate}
      \end{lemma}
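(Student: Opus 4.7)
Parts (b) and (c) follow from direct combinatorial arguments. For (b), verticolor reflection reverses each row and inverts every color: the leftmost lower point of $\tilde p$ is the old rightmost lower point (of color $\bar c$), whose color inversion returns it to $c$, and symmetrically on the right; the middle correspondingly becomes the verticolor reflection of $a$. For (c), the leftmost lower and leftmost upper points of $p$ are cyclically adjacent (by the conventions of Section~3.1) and have opposite normalized colors, so together they form a neutral turn whose two turn-blocks are the two outer blocks of $p$. Erasing this turn---a category operation via Section~3.3---fuses the two outer block-remnants into a single block containing the rightmost lower and rightmost upper points, which itself is a neutral turn and can be erased; what is left is exactly $\Arg(p)$.

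For (a), I would build $p\boxtimes p'$ from $p\otimes p'$: in the latter, the rightmost lower point of $p$ (color $\bar c$) and the leftmost lower point of $p'$ (color $c$) are cyclically adjacent and form a neutral turn whose two turn-blocks are the outer blocks of $p$ and $p'$; erasing this turn fuses them into a single block wrapping $a\otimes b$, and erasing the symmetric upper turn then yields $\mathrm{Br}(c\mid a\otimes b\mid \bar c)=p\boxtimes p'$. For (e), the inclusion $p\in\langle\trmw(p)\rangle$ is immediate from (c) since $\Arg(\trmw(p))=p$. For the reverse direction $\trmw(p)\in\langle p\rangle$, the plan is to rotate $p$ once cyclically so that its outer block becomes a through-block, tensor both rows with suitable identity partitions to extend their length, and then compose with a rotated form of $p$ to re-form an outer block at the new extended positions; equivalently, $\trmw(p)$ can be assembled as a composition of $p$ with a ``frame bracket'' built up by iteratively bracket-producting minimal brackets via (a).

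For (d), the equalities $\langle\PartHalfLibWBW\rangle=\langle\PartHalfLibBWB\rangle$ and $\langle\PartBracketBWBW\rangle=\langle\PartBracketWBWB\rangle$ follow from closure under verticolor reflection. For the crucial $\langle\PartBracketBWBW\rangle=\langle\PartHalfLibWBW\rangle$, I would exhibit $\PartBracketBWBW$ as a composition involving two rotated copies of $\PartHalfLibWBW$ (stacking two half-commutations produces the bracket shape) and, conversely, recover $\PartHalfLibWBW$ from a rotation of $\PartBracketBWBW$ by erasing a suitable neutral turn. For (f), part (c) yields $p=\Arg(\trms(p))\in\langle\trms(p)\rangle$, and $\PartHalfLibWBW$ is recovered from $\trms(p)$ by a short explicit composition. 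Conversely, starting from $\trmw(p)\in\langle p\rangle$ (via (e)), composing with $\PartHalfLibWBW$ suitably tensored with identities converts the nested outer block of $\trmw(p)$ into the straightened through-block form $\mathrm{Id}(c)\otimes a\otimes \mathrm{Id}(\bar c)$, producing $\trms(p)$.

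The main obstacle I anticipate is (d): interconverting $\PartBracketBWBW$ and $\PartHalfLibWBW$ requires producing explicit sequences of rotations and compositions and verifying that they yield the desired bracket or half-liberation shape. Part (e)'s reverse direction and part (f) also require care, but they reduce to combinations of the tensor-plus-erase pattern established in (a), the argument extraction of (c), and the bracket-frame construction.
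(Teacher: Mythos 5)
Parts (a), (b) and (c) of your plan are correct and are essentially the paper's own arguments: the paper proves (a) by erasing the adjacent neutral turn in $p\otimes p'$ and passing to the bracket associated with the lower row (Lemma~\ref{lemma:associated_brackets}), and (c) by erasing exactly the two cross-row turns you describe. Your route to the central equality in (d) can also be completed---two suitably rotated copies of $\PartHalfLibWBW$ (one with its rightmost upper point rotated down, one with its rightmost lower point rotated up) do compose to $\PartBracketWBWB$, and erasing a neutral turn in a rotation of the bracket recovers $\PartHalfLibWBW$---which is a mild variant of the paper's proof via Lemma~\ref{lemma:associated_brackets}. One step of (d) is wrong as stated, though: $\PartBracketBWBW$ and $\PartBracketWBWB$ are each verticolor-reflexive (reversing and inverting the color word $\bullet\circ\bullet\circ$ gives back $\bullet\circ\bullet\circ$), so verticolor reflection does \emph{not} interchange them; the chain is closed instead by observing, as the paper does, that the entire argument survives exchanging $\circ$ and $\bullet$.

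The genuine gaps are in (e) and (f). For the inclusion $\trmw(p)\in\langle p\rangle$, your \enquote{frame bracket} $\mathrm{Br}\left(\overline{c}\mid \mathrm{Id}(\text{colors of } p\text{'s row})\mid c\right)$ is in general not an element of $\langle p\rangle$: already when $p$ is a four-point bracket, $\langle p\rangle=\langle\emptyset\rangle=\Cppnb\cap\mc{NC}^{\circ\bullet}$ contains no crossing partition, while the frame bracket $\mathrm{Br}\left(\overline{c}\mid\mathrm{Id}(c\overline{c})\mid c\right)$ has crossing blocks; nor can it be assembled by $\boxtimes$ from brackets available in $\langle p\rangle$. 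The paper needs no auxiliary partition here: it rotates \emph{both} legs of the upper outer block down and applies Lemma~\ref{lemma:associated_brackets}, i.e.\ $\trmw(p)=qq^{*}$ with $q=(p^{\lcurvearrowdown})^{\rcurvearrowdown}$. In (f), the identity $p=\Arg(\trms(p))$ is false: $\Arg(\trms(p))=\mathrm{Id}(c)\otimes\Arg(p)\otimes\mathrm{Id}(\overline{c})$, which has forgotten the outer block of $p$; the paper instead recovers $p^{\circlearrowright}$ from $\trms(p)$ by erasing the two first lower and the two last upper points. Moreover, $\trms(p)$ cannot be obtained by composing $\trmw(p)$ with $\PartHalfLibWBW$ tensored with identities: in any composition in which $\trmw(p)$ supplies the surviving lower (or upper) row, its nested block---both legs on that row, none on the middle row---persists as a block of the result, whereas in $\trms(p)$ these two points lie in distinct through blocks; composition can merge blocks but never split them. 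The paper's construction goes through the bracket product $\mathrm{Br}\left(c\mid\mathrm{Id}(\overline{c}c)\mid\overline{c}\right)\boxtimes p\boxtimes\mathrm{Br}\left(c\mid\mathrm{Id}(\overline{c}c)\mid\overline{c}\right)$, available from $p$ and $\PartHalfLibWBW$ by (a) and (d), followed by erasing two turns, which yields $\trms(p)^{\circlearrowright}$.
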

      \begin{proof}
Let $c\in \{\circ,\bullet\}$ be the starting color of $p$ and $p'$ and abbreviate $a\eqpd \Arg(p)$.
	\begin{enumerate}[wide, labelwidth = !, label=(\alph*) ]
		\item Erasing in $p\otimes p'$ the rightmost lower point formerly of $p$ and the leftmost lower point formerly of $p'$ and passing to the bracket associated with the lower row of the resulting partition produces the bracket $p\boxtimes p'$, which proves the claim by Lemma~\ref{lemma:associated_brackets}.
		\item Verticolor reflection turns $\mathrm{Br}\left(c\mid a\mid \overline c\right)$ into $\mathrm{Br}\left(\overline{\overline{c}}\mid \tilde a\mid \overline c\right)=\mathrm{Br}\left(c\mid \tilde a\mid \overline c\right)$. 
                \item Erasing in $p$ the two turns formed, on the one hand, by the leftmost upper point and its successor and, on the other hand, by the rightmost lower point and its successor yields $\Arg(p)$.
                  \item We show the chain of inclusions from the left to the right. For the first, erase in $\PartBracketBWBW$ the two leftmost lower points and rotate the leftmost upper point to the lower row to obtain $\PartHalfLibWBW$. The second inclusion is seen to hold by recognizing that $\PartHalfLibWBW$ and $\PartHalfLibBWB$ are cyclic rotations of each other. Using Lemma~\ref{lemma:associated_brackets} to project to the right side sector of the middle block of $\PartHalfLibBWB$ gives $\PartBracketWBWB$. Since, in this chain, the roles of $\bullet$ and $\circ$ can be exchanged, the claim follows.
		\item The bracket associated with the lower row of the partition $(p^\lcurvearrowdown)^\rcurvearrowdown$ is $\trmw(p)$. So, Lemma~\ref{lemma:associated_brackets} proves one inclusion. The converse is a consequence of Part~\ref{item:bracket_operations-item_5}.
		\item Starting with the generators on the left hand side, employ Parts~\ref{item:bracket_operations-item_1} and~\ref{item:bracket_operations-item_6} to construct $\mathrm{Br}\left(c\mid \mathrm{Id}(\overline{c}c)\mid \overline{c}\right)\boxtimes p\boxtimes \mathrm{Br}\left(c\mid \mathrm{Id}(\overline{c}c)\mid \overline{c}\right)\in\langle p,\PartHalfLibWBW\rangle$. Erasing there  the two last upper and the two first lower points yields $\trms(p)^\circlearrowright$. Conversely, applying the last step of the previous procedure to $\trms(p)$ returns $p^\circlearrowright$. And to obtain $\PartHalfLibWBW$, project with Lemma~\ref{lemma:associated_brackets} to the left side sector of the block $\mathrm{Id}(c)$ in the partition $\trms(p)=\mathrm{Br}\left({\overline{c}}\mid \mathrm{Id}(c)\otimes a\otimes \mathrm{Id}(\overline{c})\mid c\right)$ and use Part~\ref{item:bracket_operations-item_6}. \qedhere
	\end{enumerate}
\end{proof}

\section{Generators\texorpdfstring{ of $\mc S_w$\\{[Main Theorem~\ref*{theorem:main_1}~{\normalfont\ref*{item:main_1-3}}]}}{}}
\label{section:generators}
 
We give a three-step proof of Part~\ref{item:main_1-3} of Main~Theorem~\ref{theorem:main_1}, using the results of Section~\ref{section:brackets}.
First, we apply the general results of Lemma~\ref{lemma:bracket_operations} to the alleged generators of $\mc S_w$ for $w\in \pint$, i.e.\ $w>0$.
\begin{lemma}
  \label{lemma:arithmetics_generators_of_S_w}
  Let $w\in \pint$ be arbitary.
  \begin{enumerate}[label=(\alph*)]
  \item \label{item:arithmetics_generators_of_S_w-item_1}
    It holds
		$\langle \PartBracketBBwW\rangle=\langle\PartBracketBWwW\rangle=\langle \PartBracketWBwB\rangle=\langle\PartBracketWWwB\rangle$.
              \item \label{item:arithmetics_generators_of_S_w-item_2}
                It holds $\PartHalfLibWBW\in \langle\PartBracketBBwW\rangle$.
              \item \label{item:arithmetics_generators_of_S_w-item_3}
                For all $v\in \pint$ hold $\PartBracketBBvWvW\in \langle \PartBracketBBwW\rangle$ and  $\PartBracketWWvBvB\in \langle\PartBracketBBwW\rangle$.
  \end{enumerate}
\end{lemma}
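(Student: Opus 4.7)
The three parts are interlocking: (a) invokes (b), and (c) uses (a), so I would address them in the order (b), (a), (c).

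For (b), observe that in $p=\PartBracketBBwW=\mathrm{Br}(\bullet\mid\mathrm{Id}(\bullet^{w})\mid\circ)$ the rightmost inner vertical block bounds, on one side in cyclic order, a four-point sector $S$ consisting of the rightmost inner $\bullet$ on the lower row, the rightmost $\circ$ on the lower row, the rightmost $\circ$ on the upper row, and the rightmost inner $\bullet$ on the upper row; its normalized coloration in cyclic order is $\bullet,\circ,\bullet,\circ$. By Lemma~\ref{lemma:associated_brackets}, $B(p,S)=\mathrm{Br}(\bullet\mid\mathrm{Id}(\circ\bullet)\mid\circ)=\PartBracketBWBW$ lies in $\langle\PartBracketBBwW\rangle$, whence Lemma~\ref{lemma:bracket_operations}\,\ref{item:bracket_operations-item_6} gives $\PartHalfLibWBW\in\langle\PartBracketBWBW\rangle=\langle\PartBracketBBwW\rangle$.

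For (a), Lemma~\ref{lemma:bracket_operations}\,\ref{item:bracket_operations-item_2} applied to $\PartBracketBBwW$ identifies $\widetilde{\PartBracketBBwW}=\mathrm{Br}(\bullet\mid\mathrm{Id}(\circ^{w})\mid\circ)=\PartBracketBWwW$ (since $\widetilde{\mathrm{Id}(\bullet^{w})}=\mathrm{Id}(\circ^{w})$), so closure of categories under verticolor reflection gives $\langle\PartBracketBBwW\rangle=\langle\PartBracketBWwW\rangle$, and analogously $\langle\PartBracketWBwB\rangle=\langle\PartBracketWWwB\rangle$. To bridge the two starting-color classes I combine (b) with Lemma~\ref{lemma:bracket_operations}\,\ref{item:bracket_operations-item_4}: since $\PartHalfLibWBW\in\langle\PartBracketBBwW\rangle$, the strong inversion $\trms(\PartBracketBBwW)=\mathrm{Br}(\circ\mid\mathrm{Id}(\bullet^{w+1}\circ)\mid\bullet)$ lies in $\langle\PartBracketBBwW\rangle$. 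Its rightmost inner $\bullet$ and the lone inner $\circ$ form a neutral turn whose legs sit in distinct vertical blocks; erasing this lower-row turn collapses those verticals into a single upper-row block which is again a turn, and a second erasure leaves $\PartBracketWBwB=\mathrm{Br}(\circ\mid\mathrm{Id}(\bullet^{w})\mid\bullet)$. Hence $\PartBracketWBwB\in\langle\PartBracketBBwW\rangle$, and by the symmetry of the reasoning all four categories coincide.

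For (c), combining (a) with $k$-fold iteration of Lemma~\ref{lemma:bracket_operations}\,\ref{item:bracket_operations-item_1} produces, for every $k\in\pint$, the bracket $\mathrm{Br}(\bullet\mid\mathrm{Id}(\bullet^{kw}\circ^{kw})\mid\circ)\in\langle\PartBracketBBwW\rangle$ as the $\boxtimes$-product of $k$ copies of $\PartBracketBBwW$ with $k$ copies of $\PartBracketBWwW$; this is precisely $\PartBracketBBvWvW$ with $v=kw$. For a general $v\in\pint$, I choose $k$ with $kw\geq v$ and apply $kw-v$ rounds of $\bullet\circ$-boundary erasure in the interior: each such neutral pair, erased from the lower row, merges two adjacent verticals into an upper-row block that is itself a turn, whose subsequent erasure shortens the inner sequence by one $\bullet$-$\circ$ pair. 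The analogous iteration starting from the $\boxtimes$-product of $k$ copies of $\PartBracketWWwB$ with $k$ copies of $\PartBracketWBwB$, which equals $\mathrm{Br}(\circ\mid\mathrm{Id}(\circ^{kw}\bullet^{kw})\mid\bullet)$, delivers $\PartBracketWWvBvB$ for every $v\in\pint$. The principal technical difficulty I anticipate, in both (a) and (c), is verifying that the collateral upper-row block produced by each lower-row turn erasure is itself a legitimate turn, so that the resulting partition is indeed the balanced bracket claimed.
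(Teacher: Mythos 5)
Your proof is correct and follows essentially the same route as the paper's: verticolor reflection settles the two same-starting-color identifications, the small bracket $\PartBracketBWBW$ together with strong inversion (Lemma~\ref{lemma:bracket_operations}~\ref{item:bracket_operations-item_4}) switches the starting color, and bracket products trimmed by turn erasures give part (c). The only local deviations are that you obtain $\PartBracketBWBW$ directly as the associated bracket of a four-point sector of the generator (the paper instead forms $p\boxtimes\tilde p$ and erases the middle points) and that you apply $\trms$ to the generator and then erase a turn, whereas the paper exhibits the target's strong inversion via a bracket product and invokes the reversibility stated in Lemma~\ref{lemma:bracket_operations}~\ref{item:bracket_operations-item_4}; both variants are sound, and your turn-erasure bookkeeping (the merged upper pair being again a turn) checks out.
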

\begin{proof}
  \begin{enumerate}[label=(\alph*), labelwidth=!,wide]
  \item 
    Let $c\in \{\circ,\bullet\}$ be arbitrary. To verify Part~\ref{item:arithmetics_generators_of_S_w-item_1}, by Lemma \hyperref[item:bracket_operations-item_2]{\ref*{lemma:bracket_operations}~\ref*{item:bracket_operations-item_2}}, it suffices to show that $\mathrm{Br}\left(\overline c\mid \mathrm{Id}\left( c^{\otimes w}\right)\mid c\right)$ is contained in $\mc C\eqpd\langle \mathrm{Br}\left( c \mid \mathrm{Id}\left(c^{\otimes w}\right)\mid \overline c\right)\rangle$. By Lemma \hyperref[item:bracket_operations-item_1]{\ref*{lemma:bracket_operations}~\ref*{item:bracket_operations-item_1}} and \hyperref[item:bracket_operations-item_2]{\ref*{item:bracket_operations-item_2}}, the category  $\mc C$ comprises $\mathrm{Br}\left (c \mid \mathrm{Id}\left({\overline{c}}^{\otimes w}\otimes{c}^{\otimes w} \right) \mid \overline{c}\right)$.
    Erasing symmetrically the $2(w-1)$ middle points on the lower row of the latter and passing to the associated bracket of the result yields $\mathrm{Br}\left (c \mid \mathrm{Id}\left(\overline{c}c\right) \mid \overline{c}\right)\in\mathcal{C}$.  Moreover, Lemma \hyperref[item:bracket_operations-item_1]{\ref*{lemma:bracket_operations}~\ref*{item:bracket_operations-item_1}} now allows us to conclude $\mathrm{Br}\left (c \mid \mathrm{Id}\left(\overline{c}\otimes c^{\otimes w+1}\right) \mid \overline{c}\right)\in \mc C$. Then, Lemma~\hyperref[item:bracket_operations-item_4]{\ref*{lemma:bracket_operations}~\ref*{item:bracket_operations-item_4}} implies $\mathrm{Br}\left (\overline{c} \mid \mathrm{Id}\left(c^{\otimes w}\right) \mid c\right)\in \mc C$ as $\mathrm{Br}\left (c \mid \mathrm{Id}\left(\overline{c}\otimes c^{\otimes w+1}\right) \mid \overline{c}\right)=\trms(\mathrm{Br}\left (\overline{c} \mid \mathrm{Id}\left(c^{\otimes w}\right) \mid c\right))$.
    \item In the proof of Part~\ref{item:arithmetics_generators_of_S_w-item_1}, we saw especially that $\langle\PartBracketBBwW\rangle$ contains $\PartBracketBWBW$.  With Lemma~\hyperref[item:bracket_operations-item_6]{\ref*{lemma:bracket_operations}~\ref*{item:bracket_operations-item_6}}, we have thus already proven  Part~\ref{item:arithmetics_generators_of_S_w-item_2}.
                  \item Let $c\in \{\circ,\bullet\}$ be arbitrary, and let $k\in \pint$ be large enough such that $v\leq kw$. Then, using Part~\ref{item:arithmetics_generators_of_S_w-item_1} and Lemma~ \hyperref[item:bracket_operations-item_1]{\ref*{lemma:bracket_operations}~\ref*{item:bracket_operations-item_1}}, the category $\mc C\eqpd \langle\PartBracketBBwW\rangle$ contains the bracket $\brt{c}{\overline c}{\mathrm{Id}(c^{\otimes kw}\otimes \overline c^{\otimes kw})}$. Erasing the $2(kw-v)$ middle points on the lower row and passing to the associated bracket of the result proves Claim~\ref{item:arithmetics_generators_of_S_w-item_3}. \qedhere
                    \end{enumerate}
              \end{proof}

We also need the following relationships between the supposed generators of $\mc S_0$.
      \begin{lemma}
        \label{lemma:arithmetics_generators_of_S_0}
        Let $v\in \pint$ be arbitrary.
        \begin{enumerate}[label=(\alph*)]
        \item \label{item:arithmetics_generators_of_S_0-item_2}
          For every $v'\in \pint$ with $v'\leq v$ holds $\PartBracketBBvdWvdW\in\langle \PartBracketBBvWvW\rangle$ and  $\PartBracketWWvdBvdB\in\langle \PartBracketWWvBvB\rangle$.
        \item \label{item:arithmetics_generators_of_S_0-item_1}
          It holds  $\langle \PartBracketBBvWvW\rangle=\langle \PartBracketWWvBvB\rangle$.
        \item \label{item:arithmetics_generators_of_S_0-item_3}
          The category $\langle\PartBracketBBvWvW\rangle$ comprises all connected dualizable brackets whose lower rows contain at most $2v+2$ elements and exactly one turn.
        \end{enumerate}
      \end{lemma}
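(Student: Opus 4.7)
For Part~\ref{item:arithmetics_generators_of_S_0-item_2} I will mimic the erasure-and-projection argument of Lemma~\ref{lemma:arithmetics_generators_of_S_w}~\ref{item:arithmetics_generators_of_S_w-item_3}: in $\PartBracketBBvWvW$ the $2(v-v')$ central lower points contain equal numbers of each colour and thus form a neutral interval, erasable by composition with $\PartIdenLoBW$ and $\PartIdenLoWB$, after which Lemma~\ref{lemma:associated_brackets} lets me pass to the associated bracket of the shortened lower row to recover exactly $\PartBracketBBvdWvdW$. The companion statement for $\PartBracketWWvBvB$ follows either by the same argument with colours swapped throughout or by combining with Part~\ref{item:arithmetics_generators_of_S_0-item_1}.

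For Part~\ref{item:arithmetics_generators_of_S_0-item_1} the plan is to exhibit $\PartBracketWWvBvB$ as the dual bracket of $\PartBracketBBvWvW$ and then invoke the identity $\langle p\rangle=\langle p^\dagger\rangle$ for dualizable brackets from Section~\ref{section:brackets}. Dualizability is direct: the lower row $\bullet^{v+1}\circ^{v+1}$ is its own reverse-and-invert, giving verticolor-reflexivity; the two middle interior points (the last $\bullet$ and the first $\circ$) form a turn whose two through blocks cross $\partial S$. The cyclic normalized-colour pattern of $\PartBracketBBvWvW$ is $(\bullet^{v+1}\circ^{v+1})^2$, so a cyclic rotation by $v+1$---half the lower-row size---shifts it to $(\circ^{v+1}\bullet^{v+1})^2$, the pattern of $\PartBracketWWvBvB$. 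Tracking each block through the rotation (in particular the central through block becoming the new outer lower block) confirms the equality of partitions.

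Part~\ref{item:arithmetics_generators_of_S_0-item_3} is the main work, by induction on $m$, half the lower-row size of the candidate bracket $q$. A preliminary structural analysis exploiting verticolor-reflexivity, dualizability and the one-turn-in-$\inte(S)$ condition forces the lower row of $q$ into one of four colour patterns with two monochromatic interior halves separated at the middle, with the two central interior points constrained to lie in through blocks; remaining interior points may be paired either into through blocks or into verticolor-symmetric lower-lower chords. For the base case $m\leq 2$ the four possibilities are $\PartBracketBBvWvW|_{v=1}$, $\PartBracketWWvBvB|_{v=1}$, $\PartBracketBWBW$ and $\PartBracketWBWB$; the first two are handled by Parts~\ref{item:arithmetics_generators_of_S_0-item_2} and~\ref{item:arithmetics_generators_of_S_0-item_1}, and for the latter two I will use Lemma~\ref{lemma:bracket_operations}~\ref{item:bracket_operations-item_6} (identifying their category with $\langle\PartHalfLibWBW\rangle$) together with a direct extraction of the half-liberation partition from $\PartBracketBBvWvW$ via a combination of rotations, bracket products (Lemma~\ref{lemma:bracket_operations}~\ref{item:bracket_operations-item_1}), and weak inversion (Lemma~\ref{lemma:bracket_operations}~\ref{item:bracket_operations-item_3}). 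For the inductive step I will assemble $q$ from smaller brackets already in $\langle\PartBracketBBvWvW\rangle$ by the induction hypothesis, using weak inversion and bracket product to build the outer shell, and realising any symmetric chord structure by taking an appropriate associated bracket of a tensor product via Lemma~\ref{lemma:associated_brackets}.

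The main obstacle I anticipate is the base-case extraction of $\PartHalfLibWBW$ from $\PartBracketBBvWvW$ needed to place the two alternating brackets into $\langle\PartBracketBBvWvW\rangle$: this cannot be done by plain erasure or projection but instead requires combining several bracket operations in a controlled way. A secondary difficulty is tracking the verticolor-symmetric chord structure through the inductive bracket-product and associated-bracket constructions; this is handled by explicit appeal to the equivalence definition from Section~\ref{section:brackets} underlying Lemma~\ref{lemma:associated_brackets}.
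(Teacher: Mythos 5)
Your part~(a) matches the paper's argument (erase the neutral middle interval, pass to the associated bracket), but part~(b) rests on a false identity: $\PartBracketWWvBvB$ is \emph{not} the dual bracket of $\PartBracketBBvWvW$ once $v\geq 2$. The colour word is indeed carried to the right place by the rotation by $v+1$, but the blocks are not: writing $p_{c,v}=\mathrm{Br}\left(c\mid\mathrm{Id}(c^{\otimes v}\otimes\overline{c}^{\otimes v})\mid\overline{c}\right)$, the rotation sends the off-centre through blocks to nested lower-lower (and upper-upper) chords, so for example $p_{\bullet,2}^{\dagger}$ has blocks joining the second and fifth lower points and the second and fifth upper points, with only two through blocks left --- this is not $p_{\circ,2}$. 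The paper's proof acknowledges exactly this: the identity $p_{c,1}=p_{\overline{c},1}^{\dagger}$ settles only $v=1$, and for $v>1$ it uses the padded partitions $q_{c,v'}=\mathrm{Id}(c^{\otimes(v-v')})\otimes p_{c,v'}\otimes\mathrm{Id}(\overline{c}^{\otimes(v-v')})$ (available by part~(a)) and the composition identity $p_{\overline{c},v}^{\dagger}=p_{c,v}q_{c,v-1}\cdots q_{c,1}$, so that $p_{\overline{c},v}^{\dagger}\in\langle p_{c,v}\rangle$ and then $\langle p\rangle=\langle p^{\dagger}\rangle$ finishes. This mechanism of \emph{vertical} composition with padded smaller brackets is the idea missing from your proposal, and it is also what the paper uses for part~(c): a connected dualizable bracket with exactly one turn in its lower row is literally $p_{c,v'}q_{c,v_m}\cdots q_{c,v_1}$, where the $v_i$ record its proper subsectors, so no induction is needed.

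For part~(c) there are two further problems. First, $\PartBracketBWBW$ and $\PartBracketWBWB$ have three turns in their lower rows, so they are outside the scope of the statement; including them as base cases imports the three-turn case that the paper handles separately (in Proposition~6.3, where $\PartHalfLibWBW$ is legitimately among the generators). Second, your plan to cover them by extracting $\PartHalfLibWBW$ from $\PartBracketBBvWvW$ is unsubstantiated --- you yourself flag it as the main obstacle and offer no construction --- and it runs against the architecture of the paper: Main Theorem~1~(c) lists $\PartHalfLibWBW$ as a separate cumulative generator of $\mc S_0$, and Lemma~5.10~(f) makes strong inversion available only when $\PartHalfLibWBW$ is present, precisely because it is not expected to lie in $\langle\PartBracketBBvWvW\rangle$. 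Finally, the tools you name for the inductive step cannot produce the required structure: weak inversion wraps an outer block of the \emph{opposite} colour (leaving the one-turn class and creating three-turn patterns), and bracket products and associated brackets of tensor products juxtapose horizontally rather than nest. So the proposal has a genuine gap in both (b) and (c), and in both cases the repair is the same: compose vertically with the padded brackets $q_{c,v'}$.
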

      \begin{proof}
        \begin{enumerate}[label=(\alph*), labelwidth=!, wide]
        \item \label{item:arithmetics_generators_of_S_0-proof-item_1}
          For $c\in \{\circ,\bullet\}$ and $v'\in \mathbb N$, erase in $\mathrm {Br}\left(c \mid \mathrm {Id}(c^{\otimes v}\otimes \overline {c}^{\otimes v})\mid \overline{c}\right)$ the neutral set of the middle $2(v-v')$ points on the lower row and pass to the associated bracket of the result to obtain $\mathrm {Br}\left(c \mid \mathrm {Id}(c^{\otimes v'}\otimes \overline {c}^{\otimes v'})\mid \overline{c}\right)$ in $\langle\mathrm {Br}\left(c \mid \mathrm {Id}(c^{\otimes v}\otimes \overline {c}^{\otimes v})\mid \overline{c}\right)\rangle$.
          \item
           For all $c\in \{\circ,\bullet\}$ and $v'\in \mathbb N$, write $p_{c,v'}\eqpd\mathrm {Br}\left(c \mid \mathrm {Id}(c^{\otimes v'}\otimes \overline {c}^{\otimes v'})\mid \overline{c}\right)$.
           Note that $p_{c,1}={p_{\overline c,1}}^\dagger$ proves the claim for $v=1$.
  \begin{gather*}
    \begin{tikzpicture}[scale=0.666]
      \draw[dotted] (-0.5,0) -- (3.5,0);
      \draw[dotted] (-0.5,3) -- (3.5,3);
      \node[circle,scale=0.4,draw=black,fill=black] (x1) at (0,0) {};
      \node[circle,scale=0.4,draw=black,fill=black] (x2) at (1,0) {};
      \node[circle,scale=0.4,draw=black,fill=white] (x3) at (2,0) {};
      \node[circle,scale=0.4,draw=black,fill=white] (x4) at (3,0) {};
      \node[circle,scale=0.4,draw=black,fill=black] (y1) at (0,3) {};
      \node[circle,scale=0.4,draw=black,fill=black] (y2) at (1,3) {};
      \node[circle,scale=0.4,draw=black,fill=white] (y3) at (2,3) {};
      \node[circle,scale=0.4,draw=black,fill=white] (y4) at (3,3) {};
      \draw (x1) -- ++(0,1) -| (x4);
      \draw (y1) -- ++(0,-1) -| (y4);
      \draw (x2) to (y2);
      \draw (x3) to (y3);
\node at (4.5,2) {$\circlearrowleft$};      
\node at (4.5,1.5) {$\rightleftarrows$};
\node at (4.5,1) {$\circlearrowright$};
    \begin{scope}[xshift=6cm]
      \draw[dotted] (-0.5,0) -- (3.5,0);
      \draw[dotted] (-0.5,3) -- (3.5,3);
      \node[circle,scale=0.4,draw=black,fill=white] (x1) at (0,0) {};
      \node[circle,scale=0.4,draw=black,fill=black] (x2) at (1,0) {};
      \node[circle,scale=0.4,draw=black,fill=black] (x3) at (2,0) {};
      \node[circle,scale=0.4,draw=black,fill=white] (x4) at (3,0) {};
      \node[circle,scale=0.4,draw=black,fill=black] (y1) at (0,3) {};
      \node[circle,scale=0.4,draw=black,fill=white] (y2) at (1,3) {};
      \node[circle,scale=0.4,draw=black,fill=white] (y3) at (2,3) {};
      \node[circle,scale=0.4,draw=black,fill=black] (y4) at (3,3) {};
      \draw (x1) to (y3);
      \draw (x2) to (y4);
      \draw (x3) to (y1);
      \draw (x4) to (y2);
      \node at (4.5,2) {$\circlearrowleft$};      
\node at (4.5,1.5) {$\rightleftarrows$};
\node at (4.5,1) {$\circlearrowright$};
    \end{scope}
    \begin{scope}[xshift=12cm]
      \draw[dotted] (-0.5,0) -- (3.5,0);
      \draw[dotted] (-0.5,3) -- (3.5,3);
      \node[circle,scale=0.4,draw=black,fill=white] (x1) at (0,0) {};
      \node[circle,scale=0.4,draw=black,fill=white] (x2) at (1,0) {};
      \node[circle,scale=0.4,draw=black,fill=black] (x3) at (2,0) {};
      \node[circle,scale=0.4,draw=black,fill=black] (x4) at (3,0) {};
      \node[circle,scale=0.4,draw=black,fill=white] (y1) at (0,3) {};
      \node[circle,scale=0.4,draw=black,fill=white] (y2) at (1,3) {};
      \node[circle,scale=0.4,draw=black,fill=black] (y3) at (2,3) {};
      \node[circle,scale=0.4,draw=black,fill=black] (y4) at (3,3) {};
      \draw (x1) -- ++(0,1) -| (x4);
      \draw (y1) -- ++(0,-1) -| (y4);
      \draw (x2) to (y2);
      \draw (x3) to (y3);
    \end{scope}
    \end{tikzpicture}
  \end{gather*}
           So, suppose $v>1$. By Part~\ref{item:arithmetics_generators_of_S_0-item_2} holds $p_{c,v'}\in\langle p_{c,v}\rangle$ and thus
         \begin{align*}
           q_{c,v'}\eqpd\mathrm {Id}(c^{\otimes(v-v')})\otimes p_{c,v'}\otimes \mathrm {Id}(\overline c^{\otimes(v-v')})\in \langle p_{c,v}\rangle
         \end{align*}
         for all $v'\in \mathbb N$ with $v'< v$.  The identity
         \begin{align*}
           {p_{\overline{c},v}}^{\dagger}=p_{c,v}q_{c,v-1}\ldots q_{c,2}q_{c,1}
         \end{align*}
         then proves Claim~\ref{item:arithmetics_generators_of_S_0-item_1}.
        \item        In order to prove Part~\ref{item:arithmetics_generators_of_S_0-item_3}, let $v'\in \pint$ satisfy $v'\leq v$, and let $p$ be a connected and dualizable bracket with a lower row $S$ starting with color $c\in \{\circ,\bullet\}$, with $2v'+2$ points and exactly one turn in $S$. The partition $p_{c,v'}$ as above is an element of  $\langle\PartBracketBBvWvW\rangle$ by Parts~\ref{item:arithmetics_generators_of_S_0-item_2} and \ref{item:arithmetics_generators_of_S_0-item_1}. If $p\neq p_{c,v'}$, the partitions $p_{c,v'}$ and $p$ differ by the fact that $S$ has $m\in \pint$ proper subsectors in $p$, whereas such don't exist in $p_{c,v'}$. Denote by $v_1,\ldots,v_m\in \pint$ with $v_i<v'$ for every $i\in \{1,\ldots,m\}$ the numbers such that $S$ has a proper subsector with $2v_i+2$ elements for every $i\in \{1,\ldots,m\}$. Then, again $q_{c,v_i}$ is contained in $\langle \PartBracketBBvWvW\rangle$ by Parts~\ref{item:arithmetics_generators_of_S_0-item_2} and \ref{item:arithmetics_generators_of_S_0-item_1} for every $i\in \{1,\ldots,m\}$. And from $p=p_{c,v'}q_{c,v_m}\ldots q_{c,v_2}q_{c,v_1}$ follows Claim~\ref{item:arithmetics_generators_of_S_0-item_3}.                    \qedhere
        \end{enumerate}
        
      \end{proof}

By combining Proposition~\ref{proposition:generation_of_categories_by_residual_brackets} and the preceding two lemmata, we show Part~\ref{item:main_1-3} of Main Theorem~\ref{theorem:main_1}.
\begin{proposition}
  \label{proposition:generators_of_the_categories_S_w}
  \begin{enumerate}[label=(\alph*)]
  \item \label{item:generators_of_the_categories_S_w-item_1}
    It holds $\mc S_0=\langle \PartBracketBBvWvW,\PartHalfLibWBW\mid v\in \pint\rangle$.
  \item \label{item:generators_of_the_categories_S_w-item_2}
    For every $w\in \pint$ holds $\mc S_w=\langle \PartBracketBBwW\rangle$.
  \end{enumerate}
\end{proposition}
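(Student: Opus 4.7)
The plan is to apply Proposition~\ref{proposition:generation_of_categories_by_residual_brackets} so that both claims reduce to showing every residual bracket in $\mc S_0$ (respectively $\mc S_w$) lies in the category generated by the listed partitions. The easy inclusion ``$\supseteq$'' in each identity is settled by direct color-sum checks on the finitely many sectors of each listed generator.

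For the reverse inclusion I would classify residual brackets. A residual of the first kind has no turns in the interior of its lower sector $S$, which forces all interior points of $S$ to share a common normalized color $c$; hence the lower-sector sum equals $\pm(n-2)$ where $n = |S|$. A residual of the second kind has lower coloration $(a_1, c^{k-1}, \overline{c}^{k-1}, a_{2k})$ with $a_1, a_{2k}$ of opposite colors, which forces the lower-sector sum to vanish, and a short calculation shows all other sector sums vanish as well --- so every residual of the second kind lies already in $\mc S_0$, hence in every $\mc S_w$. For Part~(a), the first-kind condition then forces $n = 2$ in $\mc S_0$ (leaving only non-crossing brackets, contained in $\langle \emptyset\rangle$), while the second-kind residuals are entirely absorbed by Lemma~\ref{lemma:arithmetics_generators_of_S_0}~\ref{item:arithmetics_generators_of_S_0-item_3} as $v$ varies over $\pint$; for Part~(b), the second-kind residuals are dealt with analogously by first invoking Lemma~\ref{lemma:arithmetics_generators_of_S_w}~\ref{item:arithmetics_generators_of_S_w-item_3} to put each $\PartBracketBBvWvW$ inside $\langle\PartBracketBBwW\rangle$.

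First-kind residuals in $\mc S_w$ are parametrized by a coloration $(a_1, c, a_n)$ and an involution $\pi$ of $I = \{2, \ldots, n-1\}$; the sector-sum condition translates to $w \mid (n-2)$ together with $w \mid (\pi(i) - i)$ for every $i \in I$. When $\pi$ is the identity the bracket equals $\mathrm{Br}(a_1 \mid \mathrm{Id}(c^{n-2}) \mid a_n)$ and is built as an iterated bracket product of copies of $\PartBracketBBwW$ via Lemma~\ref{lemma:bracket_operations}~\ref{item:bracket_operations-item_1}, with color variants supplied by Lemma~\ref{lemma:arithmetics_generators_of_S_w}~\ref{item:arithmetics_generators_of_S_w-item_1}. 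The main obstacle is the non-identity case: I expect to induct on the number of non-fixed pairs of $\pi$, exploiting the half-liberation $\PartHalfLibWBW \in \langle\PartBracketBBwW\rangle$ (Lemma~\ref{lemma:arithmetics_generators_of_S_w}~\ref{item:arithmetics_generators_of_S_w-item_2}) together with suitable residual brackets of the second kind --- whose dualizability encodes a swap of adjacent through-blocks --- to implement transpositions $i \leftrightarrow \pi(i)$ of distance a multiple of $w$, each reducing to a first-kind residual with strictly fewer non-fixed pairs. Making this transposition step respect the mod-$w$ constraint is, in my view, the delicate technical heart of the proof.
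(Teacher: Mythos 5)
You correctly reduce both parts to residual brackets via Proposition~\ref{proposition:generation_of_categories_by_residual_brackets}, and your ``$\supseteq$'' direction is fine, but the core of your reverse inclusion rests on a parametrization that does not exist. You allow a first-kind residual to join interior lower point $i$ to interior upper point $\pi(i)$ for an arbitrary involution $\pi$ subject to $w\mid(\pi(i)-i)$. A bracket is by definition \emph{projective} ($p=p^*$ and $p^2=p$), and a projective pair partition has only vertical through-blocks: this is immediate from the normal form $p'(p')^*$ underlying Lemma~\ref{lemma:associated_brackets} (the two-colored version of \cite{FrWe14}), or directly, since $p=p^*$ makes $\pi$ an involution and $p^2=p$ forces $\pi\circ\pi=\pi$, i.e.\ $\pi=\mathrm{id}$ (no zigzag interference can occur here because the turn-free interior carries a single normalized color, so interior lower points cannot pair with each other, and the boundary points are already paired with each other). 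Hence the only first-kind residuals in $\mc S_w$ are $\brt{c_1}{\overline{c_1}}{\mathrm{Id}(c_2^{\otimes kw})}$, which the paper disposes of in one line as $q^{\boxtimes k}$ with $q=\brt{c_1}{\overline{c_1}}{\mathrm{Id}(c_2^{\otimes w})}$, using Lemma~\hyperref[item:bracket_operations-item_1]{\ref*{lemma:bracket_operations}~(a)} and Lemma~\hyperref[item:arithmetics_generators_of_S_w-item_1]{\ref*{lemma:arithmetics_generators_of_S_w}~(a)}. The ``non-identity case'' that you single out as the delicate technical heart, and for which you only announce an expected induction implementing transpositions, is therefore vacuous --- but as written your proof is genuinely incomplete exactly there, since that step is never carried out and the transposition mechanism you hope for is never constructed.

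A second, smaller gap concerns the second-kind residuals in part~(a). You send them all to Lemma~\hyperref[item:arithmetics_generators_of_S_0-item_3]{\ref*{lemma:arithmetics_generators_of_S_0}~(c)}, but that lemma only covers brackets with exactly one turn in the \emph{whole} lower row $S$, whereas residuality of the second kind only restricts the turns inside $\inte(S)$: additional turns meeting $\partial S$ are possible (e.g.\ $\PartBracketBWBW$, or brackets of the form $\trmw(p')$ and $\trms(p')$). The paper's proof of part~(a) handles precisely these cases by the ``one or three turns in $S$'' dichotomy, Lemma~\hyperref[item:bracket_operations-item_6]{\ref*{lemma:bracket_operations}~(d)} and the weak/strong inversion statements \hyperref[item:bracket_operations-item_3]{(e)} and \hyperref[item:bracket_operations-item_4]{(f)}; this is exactly where the extra generator $\PartHalfLibWBW$ is indispensable. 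Your sketch omits this case split, so even after repairing the first-kind analysis, part~(a) (and the corresponding step of part~(b)) would still need this supplement to match the paper's argument.
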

\begin{proof}
  \begin{enumerate}[wide,label=(\alph*)]
  \item Writing $\mc C_0\eqpd\langle \PartBracketBBvWvW,\PartHalfLibWBW\mid v\in \pint\rangle$, it suffices to prove $\mc S_0\cap \resbr\subseteq \mc C_0$ by Proposition~\ref{proposition:generation_of_categories_by_residual_brackets}. So, let $p$ be a residual bracket of $\mc S_0$ and $S$ its lower row. If $p$ was residual of the first kind, then, in particular, $\inte(S)$ would be non-empty and free of turns. It would follow $\sigma_p(S)\neq 0$, violating the assumption $p\in \mc S_0$. Hence, we infer that $p$ must be residual of the second kind. Especially, $\inte(S)$ contains precisely one turn. If this one turn in $\inte(S)$ is not the only turn in all of $S$, then all further turns of $S$ must intersect $\partial S$. If so, then each point of $\partial S$ must be element of a turn as $p$ is verticolor-reflexive. It follows that in all of $S$ there exist either just one turn or three turns.
    Equivalently, 
    \begin{align*}
p=\PartBracketBWBW,\quad p=\PartBracketWBWB,\quad p=p',\quad p=\trmw(p')\quad\text{or} \quad p=\trms(p')      
    \end{align*}
    for a residual bracket of the second kind $p'$ whose lower row contains precisely one turn.
    In the first two cases, Lemma~\hyperref[item:bracket_operations-item_6]{\ref*{lemma:bracket_operations}~\ref*{item:bracket_operations-item_6}} shows $p\in \mc C_0$. In the remaining three,
Lemma~\hyperref[item:arithmetics_generators_of_S_0-item_2]{\ref*{lemma:arithmetics_generators_of_S_0}~\ref*{item:arithmetics_generators_of_S_0-item_2}} and Lemma~\hyperref[item:arithmetics_generators_of_S_0-item_3]{\ref*{lemma:arithmetics_generators_of_S_0}~\ref*{item:arithmetics_generators_of_S_0-item_3}} prove $p'\in \mc C_0$, from which then $p\in \mc C_0$ follows by  Lemma~\hyperref[item:bracket_operations-item_3]{\ref*{lemma:bracket_operations}~\ref*{item:bracket_operations-item_3}} and Lemma~\hyperref[item:bracket_operations-item_4]{\ref*{lemma:bracket_operations}~\ref*{item:bracket_operations-item_4}}.
  \item
    Again, by Proposition~\ref{proposition:generation_of_categories_by_residual_brackets}, it suffices to show $\mc S_w\cap \resbr\subseteq \mc C_w\eqpd \langle  \PartBracketBBwW\rangle$. So, let $p$ be a residual bracket of $\mc S_w$ and $S$ its lower row. \par
    First, suppose that $p$ is residual of the first kind. Because then $\inte(S)$ is non-empty and free of turns, the defining condition $\sigma_p(S)\in w\integers$ of $\mc S_w$ forces $p$ to be of the form
    \begin{align*}
      p=\brt{c_1}{\overline{ c_1}}{\mathrm{Id}(c_2^{\otimes kw})}
    \end{align*}
    for some colors $c_1,c_2\in \{\circ,\bullet\}$ and some $k\in \pint$. Lemma~\hyperref[item:arithmetics_generators_of_S_w-item_1]{\ref*{lemma:arithmetics_generators_of_S_w}~\ref*{item:arithmetics_generators_of_S_w-item_1}} shows that  $\mc C_w$ contains the bracket  $q\eqpd \brt{c_1}{\overline{ c_1}}{\mathrm{Id}(c_2^{\otimes w})}$. Applying Lemma~\hyperref[item:bracket_operations-item_1]{\ref*{lemma:bracket_operations}~\ref*{item:bracket_operations-item_1}} hence proves $p=q^{\boxtimes k}\in \mc C_w$.\par
Now, suppose that $p$ is residual of the second kind. Recognize that this already implies $p\in \mc S_0$. Then, $p\in \langle \PartBracketBBvWvW,\PartHalfLibWBW\mid v\in \pint\rangle$ by Part~\ref{item:generators_of_the_categories_S_w-item_1}. Hence, we can employ all three assertions of Lemma~\ref{lemma:arithmetics_generators_of_S_w} to infer $p\in \mc C_w$.
    \qedhere
      \end{enumerate}
    \end{proof}
    In the follow-up \cite{MaWe18b} to this article, it is shown that the category $\mc S_0$ is not finitely generated.
    \par
    \section{Classification\texorpdfstring{ above $\mc S_0$\\{[Main Theorem~\ref*{theorem:main_2}]}}{}}
\label{section:classification}
For now, it only remains to prove Main~Theorem~\ref{theorem:main_2}.
\begin{proposition}
		\label{proposition:classification_of_neutral_pair_partitions_above_S_0}
	If $\mathcal{C}\subseteq\mathcal P^{\circ\bullet}_{2,\mathrm{nb}}$ is a category, then either $\mathcal{C}\subsetneq\mathcal{S}_0$ or there exists $w\in\nnint$ such that $\mathcal{C}= \mathcal{S}_w$.
\end{proposition}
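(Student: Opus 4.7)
I plan to associate with any category $\mathcal{C}\subseteq\Cppnb$ a ``color spectrum'' subgroup
\[W(\mathcal{C})\eqpd\{\sigma_p(S)\mid p\in\mathcal{C},\,S\text{ a sector in }p\}\subseteq\mathbb Z\]
and show that it determines $\mathcal{C}$ whenever it is non-trivial. Closure of $W(\mathcal{C})$ under negation is immediate from Lemma~\ref{lemma:anti_symmetry_of_sector_color_sums}. For closure under addition, given $m_1=\sigma_{p_1}(S_1)$ and $m_2=\sigma_{p_2}(S_2)$ in $W(\mathcal{C})$ I pass to the associated brackets $q_i\eqpd B(p_i,S_i)\in\mathcal{C}$ via Lemma~\ref{lemma:associated_brackets}, align their starting colors by applying weak inversion (Lemma~\hyperref[item:bracket_operations-item_3]{\ref*{lemma:bracket_operations}~\ref*{item:bracket_operations-item_3}}, which preserves the lower-row color sum), and form their bracket product (Lemma~\hyperref[item:bracket_operations-item_1]{\ref*{lemma:bracket_operations}~\ref*{item:bracket_operations-item_1}}), yielding an element of $\mathcal{C}$ whose lower-row color sum equals $m_1+m_2$. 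Hence $W(\mathcal{C})=\{0\}$ or $W(\mathcal{C})=w\mathbb Z$ for some $w\in\mathbb{N}$.

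If $W(\mathcal{C})=\{0\}$, the defining condition of $\mathcal{S}_0$ is met by every partition in $\mathcal{C}$, so $\mathcal{C}\subseteq\mathcal{S}_0$, giving either $\mathcal{C}=\mathcal{S}_0$ (with $w=0$) or $\mathcal{C}\subsetneq\mathcal{S}_0$, both admissible. If $W(\mathcal{C})=w\mathbb Z$ with $w\geq 1$, then $\mathcal{C}\subseteq\mathcal{S}_w$ by the definition of $\mathcal{S}_w$, and, by Proposition~\hyperref[item:generators_of_the_categories_S_w-item_2]{\ref*{proposition:generators_of_the_categories_S_w}~\ref*{item:generators_of_the_categories_S_w-item_2}}, the reverse inclusion reduces to producing $\PartBracketBBwW$ inside $\mathcal{C}$.

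To that end, I select $p\in\mathcal{C}$ and a sector $S$ with $\sigma_p(S)=w$ and set $q\eqpd B(p,S)\in\mathcal{C}$, a bracket whose lower row $L$ has color sum $w$. I then induct on $|L|$. Whenever $|L|>w+2$, the interior $\inte(L)$ cannot be monochromatic (a monochromatic interior would have color sum $\pm|\inte(L)|$, forcing $|\inte(L)|=w$ in view of $\sigma_q(L)=w>0$), and hence contains two consecutive points of opposite colors forming a turn $T$ in $\inte(L)$; projectivity of $q$ places a mirror turn $T'$ in the interior of the upper row. Just as in Case~2 of the proof of Proposition~\ref{proposition:generation_of_categories_by_residual_brackets}, the simultaneous erasure produces a bracket $E(E(q,T),T')\in\mathcal{C}$ with the same lower-row color sum $w$ but strictly fewer points. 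The induction terminates with a bracket $q_\star\in\mathcal{C}$ whose lower row has exactly $w+2$ points and a monochromatic $\bullet$-interior of length $w$. Each interior lower point of $q_\star$ must be in a through block with some interior upper point, encoding a permutation $\pi$; a direct computation shows that the sector delimited by $\{L_i,U_{\pi(i)}\}$ has color sum $\pi(i)-i$, which, by $q_\star\in\mathcal{C}\subseteq\mathcal{S}_w$ and $|\pi(i)-i|\leq w-1$, forces $\pi=\mathrm{id}$. Consequently $q_\star$ equals $\PartBracketBBwW$ or $\PartBracketWBwB$, and Lemma~\hyperref[item:arithmetics_generators_of_S_w-item_1]{\ref*{lemma:arithmetics_generators_of_S_w}~\ref*{item:arithmetics_generators_of_S_w-item_1}} yields $\PartBracketBBwW\in\langle q_\star\rangle\subseteq\mathcal{C}$.

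The main obstacle is the inductive reduction: the lower-row turn $T$ must be erased together with its upper-row mirror $T'$ so as to preserve projectivity and hence the bracket property at every step. A second subtlety lies in the base case — a priori, nontrivial interior permutations satisfying the congruence $\pi(i)\equiv i\pmod w$ are admissible for larger interior sizes, and it is only the minimality of the interior length $w$ that rules them out, pinning $q_\star$ down to the desired canonical generator $\PartBracketBBwW$ (up to a colour flip handled by Lemma~\hyperref[item:arithmetics_generators_of_S_w-item_1]{\ref*{lemma:arithmetics_generators_of_S_w}~\ref*{item:arithmetics_generators_of_S_w-item_1}}).
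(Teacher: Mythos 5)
Your argument is correct and follows essentially the same route as the paper: your subgroup $W(\mathcal C)=w\mathbb Z$ of sector color sums is exactly the image of the paper's homomorphism $H$ on the bracket monoid (built from the same ingredients: associated brackets, bracket products, weak inversion and verticolor reflection), giving $\mathcal C\subseteq\mathcal S_w$, and the generator $\PartBracketBBwW$ is then obtained by erasing turns from a bracket of lower-row color sum $w$, just as in the paper's final step, before invoking Proposition~\hyperref[item:generators_of_the_categories_S_w-item_2]{\ref*{proposition:generators_of_the_categories_S_w}~\ref*{item:generators_of_the_categories_S_w-item_2}} and Lemma~\hyperref[item:arithmetics_generators_of_S_w-item_1]{\ref*{lemma:arithmetics_generators_of_S_w}~\ref*{item:arithmetics_generators_of_S_w-item_1}}. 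The differences are only expository: your mirror-turn induction and base-case permutation computation spell out what the paper compresses into \enquote{we may assume $\inte(S)$ contains no turns}, and the rigidity $\pi=\mathrm{id}$ is in fact automatic, since the through blocks of a projective pair partition are necessarily vertical.
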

\begin{proof}
  Writing $\mc B_c$ for the set of all brackets whose lower row starts with a $c$-colored point, $(\mc C\cap \mc B_c,\boxtimes)$ is a monoid for every $c\in \{\circ,\bullet\}$ by Lemma~\ref{lemma:bracket_operations}. Define the mapping 
		\begin{align*}
			H\colon\;  \mathcal{C}\cap \mc B \longrightarrow \mathbb{Z}, \; p\longmapsto \sigma_p(S_p)=\sigma_p\left(\inte(S_p)\right),
		\end{align*}
		where $S_p$ denotes the lower row of $p$. For each $c\in\{\circ,\bullet\}$, the map $H$ is a monoid homomorphism from $(\mathcal{C}\cap \mc B_c, \boxtimes )$ to $\left(\mathbb{Z},+\right)$. Moreover, $H(\tilde{p})=-H(p)$ for all $p\in\mathcal{C}\cap \mc B_c$. Thirdly, $H( \trmw(w))=H(p)$ for all $p\in\mathcal{C}\cap\mc B_{\overline {c}}$, proving $H(\mathcal{C}\cap \mc B_\bullet)=H(\mathcal{C}\cap\mc B_\circ)$ according to Lemma \hyperref[item:bracket_operations-item_3]{\ref*{lemma:bracket_operations}~\ref*{item:bracket_operations-item_3}}. Therefore, $H(\mathcal{C}\cap \mc B)$ is an additive subgroup of $\mathbb{Z}$. Hence, $H(\mc {C}\cap \mc B)=w\mathbb{Z}$ for some $w\in \mathbb N_0$.
                \par 
                For each sector $S$ in a partition $p\in\mathcal{C}$, the bracket $B(p,S)$ associated with $(p,S)$ is an element of $\mathcal{C}\cap \mc B$ and $\sigma_p(S)=H(B(p,S))$. That proves $\mathcal{C}\subseteq \mathcal{S}_w$.
                \par
                Now, assume $\mc C\not\subseteq \mc S_0$.  We then find a bracket $p\in\mathcal{C}\cap \mc B_\bullet$ with lower row $S$ and $\sigma_p(S)=w\neq 0$. We may assume that $\mathrm{int}(S)$ contains no turns of $p$ since erasing a turn would leave $\sigma_p(S)$ unchanged. Hence, all points in $\inte (S)$ must belong to through blocks,  meaning $p=\PartBracketBBwW$ or $p=\PartBracketBWwW$, which, according to Proposition~\ref{proposition:generators_of_the_categories_S_w} and Lemma~\hyperref[item:arithmetics_generators_of_S_w-item_1]{\ref*{lemma:arithmetics_generators_of_S_w}~\ref*{item:arithmetics_generators_of_S_w-item_1}}, proves $\mathcal{S}_w\subseteq \mathcal{C}$.\qedhere
              \end{proof}
              In \cite{MaWe18b} we will determine all subcategories of $\mc S_0$, thus completing the full classification of all categories of pair partitions with neutral blocks. But already the consequence of Propositions~\ref{proposition:classification_of_neutral_pair_partitions_above_S_0} and~\ref{proposition:set_relationships_between_the_categories} that for every category $\mc C\subseteq\Cppnb$ holds either $\mc C\subseteq \mc S_0$ or $\mc S_0\subseteq \mc C$ reveals a remarkable fact about the categories of pair partitions with neutral blocks.
	\section{Concluding Remarks}
\label{section:concluding_remarks}

\subsection{Comparison with the Previous Research \texorpdfstring{on Half-Liberations of $U_n$}{}}

Our results are consistent with the previous research on this topic:\par

\begin{enumerate}
  \item In \cite[Definition~5.5]{BhDADa11} and \cite[Definition~2.8]{BhDADaDa14}, Bhowmick, D'Andrea, Das and Dabrowski used the relations represented by 
		\begin{gather*}
		\begin{tikzpicture}[scale=0.666,baseline=(current  bounding  box.center)]
		\node [circle, scale=0.4, draw=black, fill=white] (x1-1) at (0,0) {};
		\node [circle, scale=0.4, draw=black, fill=black] (x2-1) at (1,0) {};
		\node [circle, scale=0.4, draw=black, fill=white] (x3-1) at (2,0) {};
		\node [circle, scale=0.4, draw=black, fill=white] (x1-2) at (0,1.5) {};
		\node [circle, scale=0.4, draw=black, fill=black] (x2-2) at (1,1.5) {};
		\node [circle, scale=0.4, draw=black, fill=white] (x3-2) at (2,1.5) {};
		\draw (x1-1) -- (x3-2);
		\draw (x2-1) -- (x2-2);
		\draw (x3-1) -- (x1-2);
		\end{tikzpicture}
		\end{gather*}
		to define an algebra $A_u^*(n)$. For the quantum group associated with $A_u^*(n)$, Banica and Bichon later used the symbol $U_n^\times$ in \cite[Definition~3.2 (3)]{BaBi17}.\par Proposition~\hyperref[item:generators_of_the_categories_S_w-item_1]{\ref*{proposition:generators_of_the_categories_S_w}~\ref*{item:generators_of_the_categories_S_w-item_1}} shows that $\langle\PartHalfLibWBW\rangle$ is a subcategory of $\mc S_0$. We will put the category $\langle\PartHalfLibWBW\rangle$ into a broader context in \cite{MaWe18b}.
\item Bichon and Dubois-Violette defined in \cite[Example 4.10]{BiDu13} a quantum group with algebra $A_u^{**}(n)$. The partition 
		\begin{gather*}
		\begin{tikzpicture}[scale=0.666,baseline=(current  bounding  box.center)]
		\node [circle, scale=0.4, draw=black, fill=white] (x1-1) at (0,0) {};
		\node [circle, scale=0.4, draw=black, fill=white] (x2-1) at (1,0) {};
		\node [circle, scale=0.4, draw=black, fill=white] (x3-1) at (2,0) {};
		\node [circle, scale=0.4, draw=black, fill=white] (x1-2) at (0,1.5) {};
		\node [circle, scale=0.4, draw=black, fill=white] (x2-2) at (1,1.5) {};
		\node [circle, scale=0.4, draw=black, fill=white] (x3-2) at (2,1.5) {};
		\draw (x1-1) -- (x3-2);
		\draw (x2-1) -- (x2-2);
		\draw (x3-1) -- (x1-2);
		\end{tikzpicture}
		\end{gather*}
		generates its intertwiner spaces.  Later, Banica and Bichon denoted the corresponding quantum group by $U_n^{**}$ in \cite{BaBi17b} and \cite{BaBi17}. 
                \par
                	And in \cite[Definition~7.1]{BaBi17b}, Banica and Bichon introduced a series of quantum groups referred to as $\left( U_{n,k} \right)_{k\in\mathbb{N}}$ in \cite{BaBi17b} and later in \cite{BaBi17}. For all $k\in\mathbb{N}$, the \enquote{$k$-half-liberated unitary quantum group} $U_{n,k}$ has its intertwiner spaces generated by
		\begin{gather*}
		\begin{tikzpicture}[scale=0.666]
		\node [circle, scale=0.4, draw=black, fill=white] (x1-1) at (0,0) {};
		\node [circle, scale=0.4, draw=black, fill=white] (x2-1) at (1,0) {};
		\node [circle, scale=0.4, draw=black, fill=white] (x3-1) at (2,0) {};						
		\node [circle, scale=0.4, draw=black, fill=white] (y1-1) at (5,0) {};
		\node [circle, scale=0.4, draw=black, fill=white] (y2-1) at (6,0) {};
		\node [circle, scale=0.4, draw=black, fill=white] (y3-1) at (7,0) {};
		\node [circle, scale=0.4, draw=black, fill=white] (y4-1) at (8,0) {};
		\node [circle, scale=0.4, draw=black, fill=white] (y5-1) at (9,0) {};
		\node [circle, scale=0.4, draw=black, fill=white] (y6-1) at (10,0) {};															
		\node [circle, scale=0.4, draw=black, fill=white] (z1-1) at (13,0) {};															
		\node [circle, scale=0.4, draw=black, fill=white] (z2-1) at (14,0) {};															
		\node [circle, scale=0.4, draw=black, fill=white] (z3-1) at (15,0) {};					
		\begin{scope}[yshift=3cm]			
		\node [circle, scale=0.4, draw=black, fill=white] (x1-2) at (0,0) {};
		\node [circle, scale=0.4, draw=black, fill=white] (x2-2) at (1,0) {};
		\node [circle, scale=0.4, draw=black, fill=white] (x3-2) at (2,0) {};						
		\node [circle, scale=0.4, draw=black, fill=white] (y1-2) at (5,0) {};
		\node [circle, scale=0.4, draw=black, fill=white] (y2-2) at (6,0) {};
		\node [circle, scale=0.4, draw=black, fill=white] (y3-2) at (7,0) {};
		\node [circle, scale=0.4, draw=black, fill=white] (y4-2) at (8,0) {};
		\node [circle, scale=0.4, draw=black, fill=white] (y5-2) at (9,0) {};
		\node [circle, scale=0.4, draw=black, fill=white] (y6-2) at (10,0) {};															
		\node [circle, scale=0.4, draw=black, fill=white] (z1-2) at (13,0) {};															
		\node [circle, scale=0.4, draw=black, fill=white] (z2-2) at (14,0) {};															
		\node [circle, scale=0.4, draw=black, fill=white] (z3-2) at (15,0) {};						
		\end{scope}						
		\path (x3-1) edge[draw=none] node {$\ldots$} (y1-1);
		\path (y6-1) edge[draw=none] node {$\ldots$} (z1-1);				
		\path (x3-2) edge[draw=none] node {$\ldots$} (y1-2);
		\path (y6-2) edge[draw=none] node {$\ldots$} (z1-2);								
		\draw (x1-1) -- (y4-2);
		\draw (x2-1) -- (y5-2);
		\draw (x3-1) -- (y6-2);								
		\draw (x1-2) -- (y4-1);
		\draw (x2-2) -- (y5-1);
		\draw (x3-2) -- (y6-1);								
		\draw (z1-1) -- (y1-2);
		\draw (z2-1) -- (y2-2);
		\draw (z3-1) -- (y3-2);								
		\draw (z1-2) -- (y1-1);
		\draw (z2-2) -- (y2-1);
		\draw (z3-2) -- (y3-1);												
		\draw [dotted] (-0.25,-0.25) -- ++(0,-0.25) -- (7.25,-0.5) -- ++ (0,0.25);
		\draw [dotted, xshift=8cm] (-0.25,-0.25) -- ++(0,-0.25) -- (7.25,-0.5) -- ++ (0,0.25);
		\node at (3.5,-1) {$k$ times};
		\node at (11.5,-1) {$k$ times};				
		\end{tikzpicture}.
              \end{gather*}
            Especially, $U_{n,1}$ corresponds to $U_n\sim\langle\PartCrossWW\rangle$ and they show that $U_{n,2}$ is identical to $U_{n}^{\ast\ast}\sim \langle\PartHalfLibWWW\rangle$. In general, by composing the above generator of $U_{n,k}$ with elements of $\langle\emptyset\rangle$ in the following way

\begin{center}
  \begin{tikzpicture}[scale=0.666]
    \def\dx{1}
    \def\dy{1}
    \def\dd{0.5}
    \def\xzero{0}
    \def\yzero{0}    
    \def\yh{3/5}
    
    \draw [dotted] ({\xzero-\dd},{0*\dy+\yzero}) to ({\xzero+16*\dx+\dd},{0*\dy+\yzero});
    \draw [dotted] ({\xzero-\dd},{3*\dy+\yzero}) to ({\xzero+16*\dx+\dd},{3*\dy+\yzero});
    \draw [dotted] ({\xzero-\dd},{6*\dy+\yzero}) to ({\xzero+16*\dx+\dd},{6*\dy+\yzero});
    \draw [dotted] ({\xzero-\dd},{9*\dy+\yzero}) to ({\xzero+16*\dx+\dd},{9*\dy+\yzero});

    \node [scale=0.4, circle, draw=black, fill=white] (n1-1) at ({\xzero+0*\dx},{\yzero+0*\dy}) {};
    \node [scale=0.4, circle, draw=black, fill=white] (n1-2) at ({\xzero+1*\dx},{\yzero+0*\dy}) {};
    \node [scale=0.4, circle, draw=black, fill=white] (n1-3) at ({\xzero+2*\dx},{\yzero+0*\dy}) {};
    \node [scale=0.4, circle, draw=black, fill=white] (n1-5) at ({\xzero+4*\dx},{\yzero+0*\dy}) {};
    \node [scale=0.4, circle, draw=black, fill=white] (n1-6) at ({\xzero+5*\dx},{\yzero+0*\dy}) {};
    \node [scale=0.4, circle, draw=black, fill=white] (n1-7) at ({\xzero+6*\dx},{\yzero+0*\dy}) {};

    \node [scale=0.4, circle, draw=black, fill=white] (n2-1) at ({\xzero+0*\dx},{\yzero+3*\dy}) {};
    \node [scale=0.4, circle, draw=black, fill=white] (n2-2) at ({\xzero+1*\dx},{\yzero+3*\dy}) {};
    \node [scale=0.4, circle, draw=black, fill=white] (n2-3) at ({\xzero+2*\dx},{\yzero+3*\dy}) {};
    \node [scale=0.4, circle, draw=black, fill=white] (n2-5) at ({\xzero+4*\dx},{\yzero+3*\dy}) {};
    \node [scale=0.4, circle, draw=black, fill=white] (n2-6) at ({\xzero+5*\dx},{\yzero+3*\dy}) {};
    \node [scale=0.4, circle, draw=black, fill=white] (n2-7) at ({\xzero+6*\dx},{\yzero+3*\dy}) {};
    \node [scale=0.4, circle, draw=black, fill=white] (n2-8) at ({\xzero+7*\dx},{\yzero+3*\dy}) {};
    \node [scale=0.4, circle, draw=black, fill=white] (n2-9) at ({\xzero+8*\dx},{\yzero+3*\dy}) {};
    \node [scale=0.4, circle, draw=black, fill=white] (n2-11) at ({\xzero+10*\dx},{\yzero+3*\dy}) {};
    \node [scale=0.4, circle, draw=black, fill=white] (n2-12) at ({\xzero+11*\dx},{\yzero+3*\dy}) {};
    \node [scale=0.4, circle, draw=black, fill=black] (n2-13) at ({\xzero+12*\dx},{\yzero+3*\dy}) {};
    \node [scale=0.4, circle, draw=black, fill=black] (n2-14) at ({\xzero+13*\dx},{\yzero+3*\dy}) {};
    \node [scale=0.4, circle, draw=black, fill=black] (n2-16) at ({\xzero+15*\dx},{\yzero+3*\dy}) {};
    \node [scale=0.4, circle, draw=black, fill=black] (n2-17) at ({\xzero+16*\dx},{\yzero+3*\dy}) {};

    \node [scale=0.4, circle, draw=black, fill=white] (n3-1) at ({\xzero+0*\dx},{\yzero+6*\dy}) {};
    \node [scale=0.4, circle, draw=black, fill=white] (n3-2) at ({\xzero+1*\dx},{\yzero+6*\dy}) {};
    \node [scale=0.4, circle, draw=black, fill=white] (n3-3) at ({\xzero+2*\dx},{\yzero+6*\dy}) {};
    \node [scale=0.4, circle, draw=black, fill=white] (n3-5) at ({\xzero+4*\dx},{\yzero+6*\dy}) {};
    \node [scale=0.4, circle, draw=black, fill=white] (n3-6) at ({\xzero+5*\dx},{\yzero+6*\dy}) {};
    \node [scale=0.4, circle, draw=black, fill=white] (n3-7) at ({\xzero+6*\dx},{\yzero+6*\dy}) {};
    \node [scale=0.4, circle, draw=black, fill=white] (n3-8) at ({\xzero+7*\dx},{\yzero+6*\dy}) {};
    \node [scale=0.4, circle, draw=black, fill=white] (n3-9) at ({\xzero+8*\dx},{\yzero+6*\dy}) {};
    \node [scale=0.4, circle, draw=black, fill=white] (n3-11) at ({\xzero+10*\dx},{\yzero+6*\dy}) {};
    \node [scale=0.4, circle, draw=black, fill=white] (n3-12) at ({\xzero+11*\dx},{\yzero+6*\dy}) {};
    \node [scale=0.4, circle, draw=black, fill=black] (n3-13) at ({\xzero+12*\dx},{\yzero+6*\dy}) {};
    \node [scale=0.4, circle, draw=black, fill=black] (n3-14) at ({\xzero+13*\dx},{\yzero+6*\dy}) {};
    \node [scale=0.4, circle, draw=black, fill=black] (n3-16) at ({\xzero+15*\dx},{\yzero+6*\dy}) {};
    \node [scale=0.4, circle, draw=black, fill=black] (n3-17) at ({\xzero+16*\dx},{\yzero+6*\dy}) {};    

    \node [scale=0.4, circle, draw=black, fill=white] (n4-1) at ({\xzero+0*\dx},{\yzero+9*\dy}) {};
    \node [scale=0.4, circle, draw=black, fill=white] (n4-2) at ({\xzero+1*\dx},{\yzero+9*\dy}) {};
    \node [scale=0.4, circle, draw=black, fill=white] (n4-3) at ({\xzero+2*\dx},{\yzero+9*\dy}) {};
    \node [scale=0.4, circle, draw=black, fill=white] (n4-5) at ({\xzero+4*\dx},{\yzero+9*\dy}) {};
    \node [scale=0.4, circle, draw=black, fill=white] (n4-6) at ({\xzero+5*\dx},{\yzero+9*\dy}) {};
    \node [scale=0.4, circle, draw=black, fill=white] (n4-7) at ({\xzero+6*\dx},{\yzero+9*\dy}) {};

    \draw (n1-1) to (n2-1);
    \draw (n1-2) to (n2-2);
    \draw (n1-3) to (n2-3);
    \draw (n1-5) to (n2-5);
    \draw (n1-6) to (n2-6);
    \draw (n1-7) to (n2-7);
    \draw (n2-8) -- ++ (0,{-4*\yh}) -| (n2-17);
    \draw (n2-9) -- ++ (0,{-3*\yh}) -| (n2-16);
    \draw (n2-11) -- ++ (0,{-2*\yh}) -| (n2-14);
    \draw (n2-12) -- ++ (0,{-1*\yh}) -| (n2-13);

    \draw (n3-1) to (n4-1);
    \draw (n3-2) to (n4-2);
    \draw (n3-3) to (n4-3);
    \draw (n3-5) to (n4-5);
    \draw (n3-6) to (n4-6);
    \draw (n3-7) to (n4-7);    
    \draw (n3-8) -- ++ (0,{4*\yh}) -| (n3-17);
    \draw (n3-9) -- ++ (0,{3*\yh}) -| (n3-16);
    \draw (n3-11) -- ++ (0,{2*\yh}) -| (n3-14);
    \draw (n3-12) -- ++ (0,{1*\yh}) -| (n3-13);

    \draw (n2-1) to (n3-7);
    \draw (n2-2) to (n3-8);
    \draw (n2-3) to (n3-9);
    \draw (n2-5) to (n3-11);
    \draw (n2-6) to (n3-12);
    
    \draw (n3-1) to (n2-7);
    \draw (n3-2) to (n2-8);
    \draw (n3-3) to (n2-9);
    \draw (n3-5) to (n2-11);
    \draw (n3-6) to (n2-12);

    \draw (n2-13) to (n3-13);
    \draw (n2-14) to (n3-14);
    \draw (n2-16) to (n3-16);
    \draw (n2-17) to (n3-17);
    
    \node at ({\xzero+3*\dx},{\yzero+0.5*\dy}) {\ldots};
    \node at ({\xzero+3*\dx},{\yzero+2.5*\dy}) {\ldots};
    \node at ({\xzero+3*\dx},{\yzero+6.5*\dy}) {\ldots};
    \node at ({\xzero+3*\dx},{\yzero+8.5*\dy}) {\ldots};
    \node at ({\xzero+3*\dx+0.6},{\yzero+3.375*\dy}) {\ldots};
    \node at ({\xzero+3*\dx+0.6},{\yzero+(6-0.375)*\dy}) {\ldots};
    \node at ({\xzero+9*\dx-0.6},{\yzero+3.375*\dy}) {\ldots};
    \node at ({\xzero+9*\dx-0.6},{\yzero+(6-0.375)*\dy}) {\ldots};
    \node at ({\xzero+14*\dx},{\yzero+2.5*\dy}) {\ldots};
    \node at ({\xzero+14*\dx},{\yzero+3.5*\dy}) {\ldots};
    \node at ({\xzero+14*\dx},{\yzero+5.5*\dy}) {\ldots};
    \node at ({\xzero+14*\dx},{\yzero+6.5*\dy}) {\ldots};
    \node at ({\xzero+9*\dx},{\yzero+2.5*\dy}) {\ldots};
    \node at ({\xzero+9*\dx},{\yzero+6.5*\dy}) {\ldots};
  \end{tikzpicture},
\end{center}
it is seen that the category of $U_{n,k}$ contains the partition
\begin{center}
  \begin{tikzpicture}[scale=0.666, baseline=0cm]
    \node [scale=0.4, circle, draw=black, fill=white] (a0) at (0,0) {};
    \node [scale=0.4, circle, draw=black, fill=white] (a1) at (1,0) {};
    \node [scale=0.4, circle, draw=black, fill=white] (a2) at (2,0) {};    
    \node [scale=0.4, circle, draw=black, fill=white] (a4) at (4,0) {};
    \node [scale=0.4, circle, draw=black, fill=white] (a5) at (5,0) {};
    \node [scale=0.4, circle, draw=black, fill=white] (a6) at (6,0) {};    
    \node [scale=0.4, circle, draw=black, fill=white] (b0) at (0,2) {};
    \node [scale=0.4, circle, draw=black, fill=white] (b1) at (1,2) {};
    \node [scale=0.4, circle, draw=black, fill=white] (b2) at (2,2) {};    
    \node [scale=0.4, circle, draw=black, fill=white] (b4) at (4,2) {};
    \node [scale=0.4, circle, draw=black, fill=white] (b5) at (5,2) {};
    \node [scale=0.4, circle, draw=black, fill=white] (b6) at (6,2) {};    
    \draw (a0) to (b6);
    \draw (a6) to (b0);
    \draw (a1) to (b1);
    \draw (a2) to (b2);
    \draw (a4) to (b4);
    \draw (a5) to (b5);
    \node at (3,0.25) {\ldots};
    \node at (3,1.75) {\ldots};
    \draw[densely dotted] (0.833,-0.25) --++(0,-0.3) -|(5.166,-0.25);
    \node at (3,-1) {$k-1$ times};
  \end{tikzpicture}.
\end{center}
Conversely, by composing this partition $k$ times with itself with successively increasing offset, we can recover the original generator of $U_{n,k}$:
  \begin{center}
  \begin{tikzpicture}[scale=0.666]
    \def\dx{1}
    \def\dy{1}
    \def\dd{0.5}
    \def\xzero{0}
    \def\yzero{0}    

    \draw [dotted] ({\xzero-\dd},{0*\dy+\yzero}) to ({\xzero+11*\dx+\dd},{0*\dy+\yzero});
    \draw [dotted] ({\xzero-\dd},{2*\dy+\yzero}) to ({\xzero+11*\dx+\dd},{2*\dy+\yzero});
    \draw [dotted] ({\xzero-\dd},{4*\dy+\yzero}) to ({\xzero+11*\dx+\dd},{4*\dy+\yzero});
    \draw [dotted] ({\xzero-\dd},{6*\dy+\yzero}) to ({\xzero+11*\dx+\dd},{6*\dy+\yzero});
    \draw [dotted] ({\xzero-\dd},{8*\dy+\yzero}) to ({\xzero+11*\dx+\dd},{8*\dy+\yzero});

    \node [scale=0.4, circle, draw=black, fill=white] (n5-1) at ({\xzero+0*\dx},{\yzero+8*\dy}) {};
    \node [scale=0.4, circle, draw=black, fill=white] (n5-2) at ({\xzero+1*\dx},{\yzero+8*\dy}) {};
    \node [scale=0.4, circle, draw=black, fill=white] (n5-3) at ({\xzero+2*\dx},{\yzero+8*\dy}) {};
    \node [scale=0.4, circle, draw=black, fill=white] (n5-5) at ({\xzero+4*\dx},{\yzero+8*\dy}) {};
    \node [scale=0.4, circle, draw=black, fill=white] (n5-6) at ({\xzero+5*\dx},{\yzero+8*\dy}) {};
    \node [scale=0.4, circle, draw=black, fill=white] (n5-7) at ({\xzero+6*\dx},{\yzero+8*\dy}) {};
    \node [scale=0.4, circle, draw=black, fill=white] (n5-8) at ({\xzero+7*\dx},{\yzero+8*\dy}) {};
    \node [scale=0.4, circle, draw=black, fill=white] (n5-9) at ({\xzero+8*\dx},{\yzero+8*\dy}) {};
    \node [scale=0.4, circle, draw=black, fill=white] (n5-11) at ({\xzero+10*\dx},{\yzero+8*\dy}) {};
    \node [scale=0.4, circle, draw=black, fill=white] (n5-12) at ({\xzero+11*\dx},{\yzero+8*\dy}) {};

    \node [scale=0.4, circle, draw=black, fill=white] (n4-1) at ({\xzero+0*\dx},{\yzero+6*\dy}) {};
    \node [scale=0.4, circle, draw=black, fill=white] (n4-2) at ({\xzero+1*\dx},{\yzero+6*\dy}) {};
    \node [scale=0.4, circle, draw=black, fill=white] (n4-3) at ({\xzero+2*\dx},{\yzero+6*\dy}) {};
    \node [scale=0.4, circle, draw=black, fill=white] (n4-5) at ({\xzero+4*\dx},{\yzero+6*\dy}) {};
    \node [scale=0.4, circle, draw=black, fill=white] (n4-6) at ({\xzero+5*\dx},{\yzero+6*\dy}) {};
    \node [scale=0.4, circle, draw=black, fill=white] (n4-7) at ({\xzero+6*\dx},{\yzero+6*\dy}) {};
    \node [scale=0.4, circle, draw=black, fill=white] (n4-8) at ({\xzero+7*\dx},{\yzero+6*\dy}) {};
    \node [scale=0.4, circle, draw=black, fill=white] (n4-9) at ({\xzero+8*\dx},{\yzero+6*\dy}) {};
    \node [scale=0.4, circle, draw=black, fill=white] (n4-11) at ({\xzero+10*\dx},{\yzero+6*\dy}) {};
    \node [scale=0.4, circle, draw=black, fill=white] (n4-12) at ({\xzero+11*\dx},{\yzero+6*\dy}) {};

    \node [scale=0.4, circle, draw=black, fill=white] (n3-1) at ({\xzero+0*\dx},{\yzero+4*\dy}) {};
    \node [scale=0.4, circle, draw=black, fill=white] (n3-2) at ({\xzero+1*\dx},{\yzero+4*\dy}) {};
    \node [scale=0.4, circle, draw=black, fill=white] (n3-3) at ({\xzero+2*\dx},{\yzero+4*\dy}) {};
    \node [scale=0.4, circle, draw=black, fill=white] (n3-5) at ({\xzero+4*\dx},{\yzero+4*\dy}) {};
    \node [scale=0.4, circle, draw=black, fill=white] (n3-6) at ({\xzero+5*\dx},{\yzero+4*\dy}) {};
    \node [scale=0.4, circle, draw=black, fill=white] (n3-7) at ({\xzero+6*\dx},{\yzero+4*\dy}) {};
    \node [scale=0.4, circle, draw=black, fill=white] (n3-8) at ({\xzero+7*\dx},{\yzero+4*\dy}) {};
    \node [scale=0.4, circle, draw=black, fill=white] (n3-9) at ({\xzero+8*\dx},{\yzero+4*\dy}) {};
    \node [scale=0.4, circle, draw=black, fill=white] (n3-11) at ({\xzero+10*\dx},{\yzero+4*\dy}) {};
    \node [scale=0.4, circle, draw=black, fill=white] (n3-12) at ({\xzero+11*\dx},{\yzero+4*\dy}) {};

    \node [scale=0.4, circle, draw=black, fill=white] (n2-1) at ({\xzero+0*\dx},{\yzero+2*\dy}) {};
    \node [scale=0.4, circle, draw=black, fill=white] (n2-2) at ({\xzero+1*\dx},{\yzero+2*\dy}) {};
    \node [scale=0.4, circle, draw=black, fill=white] (n2-3) at ({\xzero+2*\dx},{\yzero+2*\dy}) {};
    \node [scale=0.4, circle, draw=black, fill=white] (n2-5) at ({\xzero+4*\dx},{\yzero+2*\dy}) {};
    \node [scale=0.4, circle, draw=black, fill=white] (n2-6) at ({\xzero+5*\dx},{\yzero+2*\dy}) {};
    \node [scale=0.4, circle, draw=black, fill=white] (n2-7) at ({\xzero+6*\dx},{\yzero+2*\dy}) {};
    \node [scale=0.4, circle, draw=black, fill=white] (n2-8) at ({\xzero+7*\dx},{\yzero+2*\dy}) {};
    \node [scale=0.4, circle, draw=black, fill=white] (n2-9) at ({\xzero+8*\dx},{\yzero+2*\dy}) {};
    \node [scale=0.4, circle, draw=black, fill=white] (n2-11) at ({\xzero+10*\dx},{\yzero+2*\dy}) {};
    \node [scale=0.4, draw=black, fill=white] (n2-12) at ({\xzero+11*\dx},{\yzero+2*\dy}) {};

    \node [scale=0.4, circle, draw=black, fill=white] (n1-1) at ({\xzero+0*\dx},{\yzero+0*\dy}) {};
    \node [scale=0.4, circle, draw=black, fill=white] (n1-2) at ({\xzero+1*\dx},{\yzero+0*\dy}) {};
    \node [scale=0.4, circle, draw=black, fill=white] (n1-3) at ({\xzero+2*\dx},{\yzero+0*\dy}) {};
    \node [scale=0.4, circle, draw=black, fill=white] (n1-5) at ({\xzero+4*\dx},{\yzero+0*\dy}) {};
    \node [scale=0.4, circle, draw=black, fill=white] (n1-6) at ({\xzero+5*\dx},{\yzero+0*\dy}) {};
    \node [scale=0.4, circle, draw=black, fill=white] (n1-7) at ({\xzero+6*\dx},{\yzero+0*\dy}) {};
    \node [scale=0.4, circle, draw=black, fill=white] (n1-8) at ({\xzero+7*\dx},{\yzero+0*\dy}) {};
    \node [scale=0.4, circle, draw=black, fill=white] (n1-9) at ({\xzero+8*\dx},{\yzero+0*\dy}) {};
    \node [scale=0.4, circle, draw=black, fill=white] (n1-11) at ({\xzero+10*\dx},{\yzero+0*\dy}) {};
    \node [scale=0.4, circle, draw=black, fill=white] (n1-12) at ({\xzero+11*\dx},{\yzero+0*\dy}) {};

    \draw (n4-1) to (n5-7);
    \draw (n4-2) to (n5-2);  
    \draw (n4-3) to (n5-3);
    \draw (n4-5) to (n5-5);
    \draw (n4-6) to (n5-6);    
    \draw (n4-7) to (n5-1);
    \draw (n4-8) to (n5-8);
    \draw (n4-9) to (n5-9);
    \draw (n4-11) to (n5-11);
    \draw (n4-12) to (n5-12);        

    \draw (n3-1) to (n4-1);    
    \draw (n3-2) to (n4-8);
    \draw (n3-3) to (n4-3);
    \draw (n3-5) to (n4-5);
    \draw (n3-6) to (n4-6);    
    \draw (n3-7) to (n4-7);    
    \draw (n3-8) to (n4-2);
    \draw (n3-9) to (n4-9);
    \draw (n3-11) to (n4-11);
    \draw (n3-12) to (n4-12);            
    
    \draw (n1-1) to (n2-1);
    \draw (n1-2) to (n2-2);
    \draw (n1-3) to (n2-3);
    \draw (n1-5) to (n2-5);        
    \draw (n1-6) to (n2-12);
    \draw (n1-7) to (n2-7);
    \draw (n1-8) to (n2-8);
    \draw (n1-9) to (n2-9);
    \draw (n1-11) to (n2-11);        
    \draw (n1-12) to (n2-6);

    \draw[loosely dotted, thick] (n3-2) to (n2-6);
    \draw[loosely dotted, thick] (n3-8) to (n2-12);
    
    \node at ({\xzero+3*\dx},{\yzero+6.25*\dy}) {\ldots};
    \node at ({\xzero+3*\dx},{\yzero+7.75*\dy}) {\ldots};
    \node at ({\xzero+9*\dx},{\yzero+6.25*\dy}) {\ldots};
    \node at ({\xzero+9*\dx},{\yzero+7.75*\dy}) {\ldots};
    
    \node at ({\xzero+3*\dx},{\yzero+4.25*\dy}) {\ldots};
    \node at ({\xzero+3*\dx},{\yzero+5.75*\dy}) {\ldots};
    \node at ({\xzero+9*\dx},{\yzero+4.25*\dy}) {\ldots};
    \node at ({\xzero+9*\dx},{\yzero+5.75*\dy}) {\ldots};
    
    \node at ({\xzero+3*\dx},{\yzero+0.25*\dy}) {\ldots};
    \node at ({\xzero+3*\dx},{\yzero+1.75*\dy}) {\ldots};
    \node at ({\xzero+9*\dx},{\yzero+0.25*\dy}) {\ldots};
    \node at ({\xzero+9*\dx},{\yzero+1.75*\dy}) {\ldots};
    
  \end{tikzpicture}
\end{center}
So, both partitions generate the category of $U_{n,k}$. Because
  \begin{center}
    \begin{tikzpicture}[scale=0.666]
      \draw [dotted] (-0.5,0) -- (5.5,0);
      \draw [dotted] (-0.5,2) -- (5.5,2);
      \draw [dotted] (-0.5,5) -- (5.5,5);
      \draw [dotted] (-0.5,7) -- (5.5,7);
      \node[scale=0.4, circle, draw=black,fill=white] (z1) at (0,7) {};
      \node[scale=0.4, circle, draw=black,fill=white] (z2) at (1,7) {};      
      \node[scale=0.4, circle, draw=black,fill=white] (z3) at (4,7) {};
      \node[scale=0.4, circle, draw=black,fill=black] (z4) at (5,7) {};      
      \node[scale=0.4, circle, draw=black,fill=white] (y1) at (0,5) {};
      \node[scale=0.4, circle, draw=black,fill=white] (y2) at (1,5) {};      
      \node[scale=0.4, circle, draw=black,fill=white] (y3) at (4,5) {};
      \node[scale=0.4, circle, draw=black,fill=black] (y4) at (5,5) {};
      \node[scale=0.4, circle, draw=black,fill=white] (x1) at (0,2) {};
      \node[scale=0.4, circle, draw=black,fill=white] (x2) at (1,2) {};      
      \node[scale=0.4, circle, draw=black,fill=white] (x3) at (4,2) {};
      \node[scale=0.4, circle, draw=black,fill=black] (x4) at (5,2) {};
      \node[scale=0.4, circle, draw=black,fill=white] (w1) at (0,0) {};
      \node[scale=0.4, circle, draw=black,fill=white] (w2) at (1,0) {};      
      \node[scale=0.4, circle, draw=black,fill=white] (w3) at (4,0) {};
      \node[scale=0.4, circle, draw=black,fill=black] (w4) at (5,0) {};
      \node[scale=0.4,draw=black,fill=white] (s1) at (3,7) {};
      \node[scale=0.4, circle, draw=black,fill=white] (s2) at (3,5) {};
      \node[scale=0.4, circle, draw=black,fill=white] (s3) at (3,2) {};
      \node[scale=0.4, circle, draw=black,fill=white] (s4) at (3,0) {};        
      \draw (w1) to (x3);
      \draw (w3) to (x1);
      \draw (w4) to (x4);
      \draw (x1) to (y1);
      \draw (x3) -- ++(0,1) -| (x4);
      \draw (y3) -- ++(0,-1) -| (y4);
      \draw (y1) to (z3);
      \draw (y3) to (z1);
      \draw (y4) to (z4);
      \draw (w2) to (x2);
      \draw (x2) to (y2);
      \draw (y2) to (z2);
      \draw (s1) to (s2) to (s3) to (s4);
      \node at (2,0.5) {\ldots};
      \node at (2,1.5) {\ldots};
      \node at (2,3.5) {\ldots};
      \node at (2,5.5) {\ldots};
      \node at (2,6.5) {\ldots};            
      \node at (6,3.5) {$=$};
      \draw [dotted] (6.5,5) -- (11.5,5);
      \draw [dotted] (6.5,2) -- (11.5,2);
      \node[scale=0.4, circle, draw=black,fill=white] (a1) at (7,2) {};
      \node[scale=0.4, circle, draw=black,fill=white] (a2) at (8,2) {};
      \node[scale=0.4, circle, draw=black,fill=white] (a3) at (10,2) {};      
      \node[scale=0.4, circle, draw=black,fill=black] (a4) at (11,2) {};      
      \node[scale=0.4, circle, draw=black,fill=white] (b1) at (7,5) {};
      \node[scale=0.4, circle, draw=black,fill=white] (b2) at (8,5) {};
      \node[scale=0.4, circle, draw=black,fill=white] (b3) at (10,5) {};
      \node[scale=0.4, circle, draw=black,fill=black] (b4) at (11,5) {};            
      \draw (a1) --++(0,1) -| (a4);
      \draw (b1) --++(0,-1) -| (b4);      
      \draw (a2) to (b2);
      \draw (a3) to (b3);
      \node at (9,2.5) {\ldots};
      \node at (9,4.5) {\ldots};      
    \end{tikzpicture}    
  \end{center}
and
    \begin{center}
    \begin{tikzpicture}[scale=0.666]
      \draw [dotted] (-0.5,0) -- (6.5,0);
      \draw [dotted] (-0.5,2) -- (6.5,2);
      \draw [dotted] (-0.5,5) -- (6.5,5);
      \draw [dotted] (-0.5,7) -- (6.5,7);
      \node[scale=0.4, circle, draw=black,fill=white] (z1) at (0,7) {};
      \node[scale=0.4, circle, draw=black,fill=white] (z2) at (1,7) {};
      \node[scale=0.4, circle, draw=black,fill=white] (z3) at (4,7) {};
      \node[scale=0.4, circle, draw=black,fill=white] (s1) at (3,7) {};
      \node[scale=0.4, circle, draw=black,fill=white] (s2) at (3,5) {};
      \node[scale=0.4, circle, draw=black,fill=white] (s3) at (3,2) {};
      \node[scale=0.4, circle, draw=black,fill=white] (s4) at (3,0) {};                                    
      \node[scale=0.4, circle, draw=black,fill=white] (y1) at (0,5) {};
      \node[scale=0.4, circle, draw=black,fill=white] (y2) at (1,5) {};
      \node[scale=0.4, circle, draw=black,fill=white] (y3) at (4,5) {};
      \node[scale=0.4, circle, draw=black,fill=black] (y4) at (5,5) {};
      \node[scale=0.4, circle, draw=black,fill=white] (y5) at (6,5) {};      
      \node[scale=0.4, circle, draw=black,fill=white] (x1) at (0,2) {};
      \node[scale=0.4, circle, draw=black,fill=white] (x2) at (1,2) {};
      \node[scale=0.4, circle, draw=black,fill=white] (x3) at (4,2) {};
      \node[scale=0.4, circle, draw=black,fill=black] (x4) at (5,2) {};
      \node[scale=0.4, circle, draw=black,fill=white] (x5) at (6,2) {};      
      \node[scale=0.4, circle, draw=black,fill=white] (w1) at (0,0) {};
      \node[scale=0.4, circle, draw=black,fill=white] (w2) at (1,0) {};      
      \node[scale=0.4, circle, draw=black,fill=white] (w3) at (6,0) {};
      \draw (w1) to (x1);
      \draw (w2) to (x2);
      \draw (w3) to (x5);      
      \draw (x3) --++(0,-1)-| (x4);
      \draw (x1) --++(0,1)-| (x4);
      \draw (x2) to (y2);
      \draw (x3) to (y3);      
      \draw (x5) to (y5);
      \draw (y1) to (z1);
      \draw (y2) to (z2);
      \draw (y3) to (z3);
      \draw (y4) --++(0,1)-| (y5);
      \draw (y1) --++(0,-1)-| (y4);
      \draw (s1) to (s2) to (s3) to (s4);
      \node at (7,3.5) {$=$};
      \node at (2,1) {\ldots};
      \node at (2,2.5) {\ldots};
      \node at (2,4.5) {\ldots};
      \node at (2,6) {\ldots};      
      \draw [dotted] (7.5,2.5) -- (12.5,2.5);
      \draw [dotted] (7.5,4.5) -- (12.5,4.5);
      \node[scale=0.4, circle, draw=black,fill=white] (a1) at (8,2.5) {};
      \node[scale=0.4, circle, draw=black,fill=white] (a2) at (9,2.5) {};
      \node[scale=0.4, circle, draw=black,fill=white] (a3) at (12,2.5) {};      
      \node[scale=0.4, circle, draw=black,fill=white] (a4) at (8,4.5) {};
      \node[scale=0.4, circle, draw=black,fill=white] (a5) at (9,4.5) {};
      \node[scale=0.4, circle, draw=black,fill=white] (a6) at (12,4.5) {};
      \node[scale=0.4, circle, draw=black,fill=white] (a8) at (11,4.5) {};
      \node[scale=0.4, circle, draw=black,fill=white] (a7) at (11,2.5) {};      
      \draw (a1) to (a6);
      \draw (a2) to (a5);
      \draw (a3) to (a4);
      \draw (a7) to (a8);
      \node at (10,3) {\ldots};
      \node at (10,4) {\ldots};      
    \end{tikzpicture},
  \end{center}
the category of $U_{n,k}$ is precisely our category $\mc S_k$ by Proposition~\hyperref[item:generators_of_the_categories_S_w-item_2]{\ref*{proposition:generators_of_the_categories_S_w}~\ref*{item:generators_of_the_categories_S_w-item_2}}.
              \item Lastly, the quantum group $U_{n,\infty}$ is defined by Banica and Bichon in \cite[Definition~8.1]{BaBi17b} as some limit case of their series $\left( U_{n,k} \right)_{k\in\mathbb{N}}$. The two partitions
		\begin{gather*}
		\begin{tikzpicture}[scale=0.666,baseline=(current  bounding  box.center)]
		\node [circle, scale=0.4, draw=black, fill=black] (x1-1) at (0,0) {};
		\node [circle, scale=0.4, draw=black, fill=white] (x2-1) at (1,0) {};
		\node [circle, scale=0.4, draw=black, fill=white] (x3-1) at (2,0) {};
		\node [circle, scale=0.4, draw=black, fill=black] (x4-1) at (3,0) {};
		\node [circle, scale=0.4, draw=black, fill=white] (x1-2) at (0,2) {};
		\node [circle, scale=0.4, draw=black, fill=black] (x2-2) at (1,2) {};
		\node [circle, scale=0.4, draw=black, fill=black] (x3-2) at (2,2) {};
		\node [circle, scale=0.4, draw=black, fill=white] (x4-2) at (3,2) {};
		\draw (x1-1) -- (x3-2);
		\draw (x2-1) -- (x4-2);
		\draw (x3-1) -- (x1-2);
		\draw (x4-1) -- (x2-2);									
		\end{tikzpicture}
		\quad\quad\text{and}\quad\quad
		\begin{tikzpicture}[scale=0.666,baseline=(current  bounding  box.center)]
		\node [circle, scale=0.4, draw=black, fill=white] (x1-1) at (0,0) {};
		\node [circle, scale=0.4, draw=black, fill=black] (x2-1) at (1,0) {};
		\node [circle, scale=0.4, draw=black, fill=white] (x3-1) at (2,0) {};
		\node [circle, scale=0.4, draw=black, fill=black] (x4-1) at (3,0) {};
		\node [circle, scale=0.4, draw=black, fill=white] (x1-2) at (0,2) {};
		\node [circle, scale=0.4, draw=black, fill=black] (x2-2) at (1,2) {};
		\node [circle, scale=0.4, draw=black, fill=white] (x3-2) at (2,2) {};
		\node [circle, scale=0.4, draw=black, fill=black] (x4-2) at (3,2) {};
		\draw (x1-1) -- (x3-2);
		\draw (x2-1) -- (x4-2);
		\draw (x3-1) -- (x1-2);
		\draw (x4-1) -- (x2-2);									
		\end{tikzpicture}
		\end{gather*}
		generate its associated category.                 We can write these generators equivalently in the form
                		\begin{gather*}
		\begin{tikzpicture}[scale=0.666,baseline=0.888cm]
		\node [circle, scale=0.4, draw=black, fill=black] (x1-1) at (0,0) {};
		\node [circle, scale=0.4, draw=black, fill=black] (x2-1) at (1,0) {};
		\node [circle, scale=0.4, draw=black, fill=white] (x3-1) at (2,0) {};
		\node [circle, scale=0.4, draw=black, fill=white] (x4-1) at (3,0) {};
		\node [circle, scale=0.4, draw=black, fill=black] (x1-2) at (0,3) {};
		\node [circle, scale=0.4, draw=black, fill=black] (x2-2) at (1,3) {};
		\node [circle, scale=0.4, draw=black, fill=white] (x3-2) at (2,3) {};
		\node [circle, scale=0.4, draw=black, fill=white] (x4-2) at (3,3) {};
		\draw (x1-1) -- ++ (0,1) -| (x4-1);
		\draw (x1-2) -- ++ (0,-1) -| (x4-2);                
		\draw (x2-1) -- (x2-2);
		\draw (x3-1) -- (x3-2);
		\end{tikzpicture}
		\quad\quad\text{and}\quad\quad
		\begin{tikzpicture}[scale=0.666,baseline=0.888cm]
		\node [circle, scale=0.4, draw=black, fill=black] (x1-1) at (0,0) {};
		\node [circle, scale=0.4, draw=black, fill=white] (x2-1) at (1,0) {};
		\node [circle, scale=0.4, draw=black, fill=black] (x3-1) at (2,0) {};
		\node [circle, scale=0.4, draw=black, fill=white] (x4-1) at (3,0) {};
		\node [circle, scale=0.4, draw=black, fill=black] (x1-2) at (0,3) {};
		\node [circle, scale=0.4, draw=black, fill=white] (x2-2) at (1,3) {};
		\node [circle, scale=0.4, draw=black, fill=black] (x3-2) at (2,3) {};
		\node [circle, scale=0.4, draw=black, fill=white] (x4-2) at (3,3) {};
		\draw (x1-1) -- ++ (0,1) -| (x4-1);
		\draw (x1-2) -- ++ (0,-1) -| (x4-2);                
		\draw (x2-1) -- (x2-2);
		\draw (x3-1) -- (x3-2);
		\end{tikzpicture}
              \end{gather*}
              by rotating each generator cyclically in counter-clockwise direction. And Lemma~\hyperref[item:bracket_operations-item_6]{\ref*{lemma:bracket_operations}~\ref*{item:bracket_operations-item_6}} showed that the second partition of the last pair and $\PartHalfLibWBW$ generate the same category. So, $U_{n,\infty}$ corresponds to the category $\langle\PartBracketBBWW,\PartHalfLibWBW\rangle$. As $U_n^\times$ and $U_{n,\infty}$ are distinct, an immediate consequence is $\langle\PartHalfLibWBW\rangle\subsetneq \langle\PartBracketBBWW,\PartHalfLibWBW\rangle$. Again, from Proposition~\hyperref[item:generators_of_the_categories_S_w-item_2]{\ref*{proposition:generators_of_the_categories_S_w}~\ref*{item:generators_of_the_categories_S_w-item_2}}, we know $\langle\PartHalfLibWBW\rangle$ and $\langle\PartBracketBBWW,\PartHalfLibWBW\rangle$ to be subcategories of $\mc S_0$, both to be treated in \cite{MaWe18b}. Especially, we will show that $\langle \PartBracketBBWW,\PartHalfLibWBW\rangle$ and $\mc S_0$ are indeed distinct.
\end{enumerate}

\subsection{Outlook on \texorpdfstring{\cite{MaWe18b}}{Part II}}

In the present article we established combinatorially that every category $\mc C\subseteq \langle\PartCrossWW\rangle$ is either one of the categories $(\mc S_w)_{w\in \nnint}$ or a proper subcategory of $\mc S_0$ (Proposition~\ref{proposition:classification_of_neutral_pair_partitions_above_S_0}). In particular, $\mc S_0$ is revealed to be a halfway point for all categories $\mc C$ with $\mc C\subseteq \langle \PartCrossWW\rangle$ as every such category necessarily satisfies either $\mc C\subseteq \mc S_0$ or $\mc S_0\subseteq \mc C$. The follow-up article \cite{MaWe18b} will continue the combinatorial analysis of categories in the realm $\mc S_0$. We will determine all subcategories of $\mc S_0$, their description in combinatorial terms and their generating partitions. Many more categories will be shown to exist in $\mc S_0$ besides the known examples. While the categories $\mc C$ with $\mc S_0\subseteq \mc C\subseteq \langle \PartCrossWW\rangle$ are equivalent to the cyclic groups (see also Section~\ref{section:definition_set_relationships}), we will prove that the subcategories of $\mc S_0$ are dual to the \emph{numerical semigroups}, i.e.\ subsemigroups of $(\nnint, +)$. Specifically, we can contrast the present article and \cite{MaWe18b} as follows: \par
\vspace{0.5em}
\begin{center}
  \begin{tabular}[h]{c p{0.37\textwidth} p{0.37\textwidth}}
    \multicolumn{1}{c}{Main Theorem} & \multicolumn{1}{c}{Present Article} & \multicolumn{1}{c}{\cite{MaWe18b}} \\
  \hline\\[-0.75em]
    1~(a)&Subsets $(\mc S_w)_{w\in \nnint}$ of $\langle\PartCrossWW\rangle$ are categories, where $\nnint$ corresponds to the set of \emph{cyclic groups}.&Subsets $(\mc I_N)_{N\in \mc N}$ of $\langle\PartCrossWW\rangle$ are categories, where $\mc N$ denotes the set of \emph{numerical semigroups}.\\[0.25em]
    1~(b)&$w'\integers\subseteq w\integers \implies \mc S_{w'}\subseteq \mc S_w$.&$N'\subseteq N\implies \mc I_{N'}\supseteq\mc I_N$.\\[0.25em]
    1~(c)&Generators of $\mc S_w$&Generators of $\mc I_N$\\[0.25em]    
  2&Classification of all categories $\mc C$ with $\mc S_0\subseteq\mc C\subseteq \langle\PartCrossWW\rangle$&Classification of all categories $\mc C$ with $\mc S_0\not\subseteq \mc C\subseteq\langle\PartCrossWW\rangle$\\
\end{tabular}
\end{center}
\vspace{0.5em}
In \cite{MaWe18b} we will continue to employ the techniques of \emph{bracket} partitions developed in Section~\ref{section:brackets}. It was shown in Proposition~\ref{proposition:generation_of_categories_by_residual_brackets} that every category $\mc C\subseteq \langle\PartCrossWW\rangle$ is generated by its set of \emph{residual brackets} (Definition~\ref{definition:residual_bracket}). These come in two flavors: None of the \emph{residual brackets of the first kind}, $\PartBracketBBwW$, $\PartBracketBWwW$, $\PartBracketWBwB$ and $\PartBracketWWwB$ for arbitrary $w\in \pint$, are elements of $\mc S_0$. Rather, for fixed $w$, they all generate the category $\mc S_w$ (Proposition~\hyperref[item:generators_of_the_categories_S_w-item_2]{\ref*{proposition:generators_of_the_categories_S_w}~\ref*{item:generators_of_the_categories_S_w-item_2}}). The \emph{residual brackets of the second kind} collectively generate $\mc S_0$ (Proposition~\hyperref[item:generators_of_the_categories_S_w-item_1]{\ref*{proposition:generators_of_the_categories_S_w}~\ref*{item:generators_of_the_categories_S_w-item_1}}). So far, we have only explicitly encountered the few examples: $\PartBracketBBvWvW$ and  $\PartBracketWWvBvB$ for arbitrary $v\in \pint$ as well as $\PartBracketBWBW$ and  $\PartBracketWBWB$. In \cite{MaWe18b} we consider arbitrary sets of residual brackets of the second kind and determine which categories they generate.

\printbibliography

\end{document}